\numberwithin{equation}{section}
 \newtheorem{theorem}{Theorem}
\newtheorem{lemma}{Lemma}
\newtheorem{assumption}{Assumption}
\newtheorem{proposition}{Proposition}
\newtheorem{definition}{Definition}
\newcommand{\ba}{\begin{array}}
\newcommand{\ea}{\end{array}}
\newcommand{\bit}{\begin{itemize}}
\newcommand{\eit}{\end{itemize}}
\newcommand{\be}{\begin{equation}}
\newcommand{\ee}{\end{equation}}
\newcommand{\bea}{\begin{eqnarray}}
\newcommand{\eea}{\end{eqnarray}}
\newcommand{\st}{\mathrm{s.t.}}
\newcommand{\prox}{\mathrm{prox}}
\newcommand{\argmin}{\mathop{\mathrm{arg\,min}}}
\newcommand{\by}{\mathrm{\bf y}}
\newcommand{\bs}{\mathrm{\bf s}}
\newcommand{\bd}{\mathrm{\bf d}}
\newcommand{\bx}{\mathbf{x}}
\newcommand{\bw}{\mathbf{w}}
\newcommand{\bz}{\mathbf{z}}
\newcommand{\bW}{\mathbf{W}}
\newcommand{\Rmn}[1]{\uppercase\expandafter{\romannumeral#1}}
\newcommand{\llangle}{\left \langle}
\newcommand{\rrangle}{\right \rangle}
\newcommand{\Pcal}{\mathcal{P}}
\numberwithin{equation}{section}
\newcommand{\Mcal}{\mathcal{M}}
\newcommand{\sumj}{\sum_{j=1}^n}
\newcommand{\wt}{W_{ij}^t}
\newcommand{\grad}{\mathrm{grad}}
\newcommand{\R}{\mathbb{R}}
\numberwithin{theorem}{section}
\newcommand{\iprod}[2]{\left \langle #1, #2 \right \rangle }
\newcommand{\tr}{\mathrm{tr}}
\begin{document}

\title{Decentralized Douglas-Rachford splitting methods for smooth optimization over compact submanifolds}

\author{Kangkang Deng, Jiang Hu, Hongxia Wang
	\thanks{Kangkang Deng is with the Department of Mathematics, National University of Defense Technology, China. 
 Email: ~freedeng1208@gmail.com}
	\thanks{Jiang Hu is with Massachusetts General Hospital and Harvard Medical School, Harvard University, Boston, MA 02114. Email:~hujiangopt@gmail.com. }%(Corresponding author: Jiang Hu).
 \thanks{Hongxia Wang is with the Department of Mathematics, National University of Defense Technology, China. 
 Email: ~wanghongxia@nudt.edu.cn}
 \thanks{{\it Corresponding author: Jiang Hu.}}
}

\maketitle

\begin{abstract}
We study decentralized smooth optimization problems over compact submanifolds. Recasting it as a composite optimization problem, we propose a decentralized Douglas-Rachford splitting algorithm, DDRS. When the proximal operator of the local loss function does not have a closed-form solution, an inexact version of DDRS, iDDRS, is also presented.
Both algorithms rely on an ingenious integration of the nonconvex Douglas-Rachford splitting algorithm with gradient tracking and manifold optimization. We show that our DDRS and iDDRS achieve the best-known convergence rate of $\mathcal{O}(1/K)$. The main challenge in the proof is how to handle the nonconvexity of the manifold constraint. To address this issue, we utilize the concept of proximal smoothness for compact submanifolds. This ensures that the projection onto the submanifold exhibits convexity-like properties, which allows us to control the consensus error across agents.
Numerical experiments on the principal component analysis are conducted to demonstrate the effectiveness of our decentralized DRS compared with the state-of-the-art ones.
\end{abstract}

\begin{IEEEkeywords}
Decentralized optimization, compact submanifold, Douglas-Rachford splitting, proximal smoothness, convergence rate
\end{IEEEkeywords}

\section{Introduction}

Owing to concerns about privacy and robustness, decentralized optimization over manifolds has garnered significant attention in machine learning, optimization control, and signal processing. Examples include principal component analysis \cite{scaglione2008decentralized,ye2021deepca,chen2021decentralized}, low-rank matrix completion \cite{boumal2015low,mishra2019riemannian,hu2023decentralized}, and low-dimension subspace learning \cite{mishra2019riemannian,hu2023decentralized}.
The problem can be mathematically formulated as follows:
\be \label{prob:original}  
\begin{aligned}
   \min_{x_1,\cdots,x_n} \quad & \sum_{i=1}^n f_i(x_i) \\
   \st \quad & x_1 = \cdots = x_n, \; x_i \in \Mcal, \; \forall i=1,\cdots,n,
\end{aligned} \ee
where $f_i:\mathbb{R}^{d\times r} \rightarrow \mathbb{R}$ is a continuously differentiable function held privately by the $i$-th agent, and $\Mcal$ is a compact submanifold of $\R^{d\times r}$, e.g., Stiefel manifold, Oblique manifold \cite{absil2009optimization,hu2020brief,boumal2023introduction}.

While numerous algorithms \cite{bianchi2012convergence,di2016next,tatarenko2017non,wai2017decentralized,hong2017prox,zeng2018nonconvex,scutari2019distributed,sun2020improving} have been explored for decentralized optimization with nonconvex objective functions, there are only a few papers dealing with the nonconvex constraint. 
This is an important issue because there is a frequent interest in optimizing nonconvex functions over nonconvex sets, especially compact submanifolds, see, e.g., \cite{scaglione2008decentralized,ye2021deepca,chen2021decentralized,boumal2015low,mishra2019riemannian,hu2023decentralized}. 
Such nonconvex constraint introduces additional challenges in the implementation and analysis of decentralized optimization algorithms. These scenarios often require global solutions for a series of nonconvex constrained optimization problems, potentially obstructing the use of conventional tools for algorithmic complexity analysis. In this work, we overcome these issues and develop decentralized algorithms to minimize nonconvex functions over nonconvex compact submanifolds.

The Douglas-Rachford splitting (DRS) is recognized as a famous and efficient splitting algorithm in solving convex and nonconvex optimization problems. A recent study in \cite{themelis2020douglas} shows its primal equivalence with the more popular alternating direction method of multipliers (ADMM) established in \cite{themelis2020douglas}, demonstrating the convergence of both methods using the Douglas-Rachford envelope. This leads to the following question: can we design provably convergent decentralized DRS methods for solving \eqref{prob:original}?

\subsection{Our contributions}
In this paper, we leverage a novel fusion of gradient tracking, DRS, and manifold optimization, presenting two decentralized DRS algorithms to solve the decentralized manifold optimization problem \eqref{prob:original}.
\begin{itemize}
    \item \textbf{An easy-to-implement paradigm of decentralized DRS.} By utilizing the decentralized communication graph to construct an inexact projection to the consensus set, we develop a decentralized DRS method, DDRS. To mitigate potential consensus distortions caused by the nonconvexity of the manifold constraints, the communication graph needs to be well-connected. Moreover, for cases where the proximal operator of the loss function lacks a closed-form solution, we present an inexact decentralized DRS method, iDDRS, where the inexactness of evaluating the proximal operator associated with the loss function gradually decreases. Numerical results on eigenvalue problems demonstrate the superior efficacy of our algorithm compared with state-of-the-art methods. DDRS and iDDRS are the first splitting algorithms for solving decentralized manifold optimization problems.

    \item \textbf{Harnessing convex-like properties for best-known convergence complexity.} Compared to algorithms for convex constraints, the main challenge in the convergence analysis of our algorithms arises from the nonconvexity of manifold constraints. To address this, we employ a powerful property of the compact submanifold from variational analysis, called proximal smoothness. With a well-connected communication graph, we ensure that all iterations stay within a small neighborhood of the manifold. Then, by leveraging the convex-like properties of the projection operator within such neighborhood and the Douglas-Rachford envelope, we establish the global convergence of DDRS with a convergence rate of $\mathcal{O}(1/K)$. For the iDDRS, we require that the evaluation errors of the proximal operator of the loss function are summable, thus achieving the same convergence rate as its exact counterpart.
    
\end{itemize}

\subsection{Related works}

Over the past decades, decentralized optimization has attracted increasing interest due to its wide applications. For the Euclidean case (i.e., $\Mcal = \R^{d\times r}$), one seminal approach, the decentralized gradient descent (DGD) method, is explored in \cite{tsitsiklis1986distributed,nedic2009distributed,yuan2016convergence}. Its limitation in achieving convergence using fixed step sizes spurred further research. To address this issue, algorithms using local historic iterative information, such as EXTRA \cite{shi2015extra,shi2015proximal}, gradient-tracking \cite{xu2015augmented,qu2017harnessing,nedic2017achieving}, and the proximal gradient primal-dual algorithm \cite{hong2017prox} have been proposed. The connections among these algorithms are studied in \cite{chang2020distributed}. Another popular approach is the decentralized ADMM, see, e.g.,  \cite{mateos2010distributed,erseghe2011fast,shi2014linear,ling2015dlm,Maros2018On,zhang2021penalty}, which empirically converges faster than the former methods. 
These studies can only tackle the nonconvexity from $f_i$ and fail to converge if the constraint set is nonconvex. Given the equivalence between ADMM and DRS, one of our motivations is to design fast decentralized DRS algorithms for solving \eqref{prob:original} with nonconvex submanifold constraint.

For the case where $\Mcal$ is the Stiefel manifold, the authors \cite{chen2021local,chen2021decentralized} propose a decentralized Riemannian gradient descent method and its gradient-tracking version \cite{chen2021decentralized}. When the local objective function is of negative log-probability type, a decentralized Riemannian natural gradient method is proposed in \cite{hu2023decentralized}. To use a single step of consensus, augmented Lagrangian methods \cite{wang2022decentralized,wang2022variance} are proposed. For the general submanifold setting, through a theoretical study on the regularity condition of the consensus on the manifold, the authors \cite{hu2023achieving} establish the linear consensus results of the projected gradient method and Riemannian gradient method. Based on that, a decentralized projected gradient descent method and its gradient-tracking version are presented in \cite{deng2023decentralized}. As observed in the Euclidean setting, the convergence speed of these gradient-tracking type methods can be worse than the ADMM-type methods.

\subsection{Notation}
For any positive integer $n$, let $[n]={1,2,\ldots, n}$. Define $J= \frac{1}{n} \mathbf{1}_n\mathbf{1}_n^\top$, where $\mathbf{1}_n \in \R^n$ is a vector with all entries set to $1$. For a real number $a$, we use $\lceil a \rceil$ to denote the smallest integer greater than $a$. For a square matrix $W$ and an integer $t$, $W^t$ denotes the $t$-th power of $W$. Let $\bW= W \otimes I_d \in \R^{(nd) \times (nd)}$, with $\otimes$ representing the Kronecker product.  For the submanifold $\Mcal \subset \R^{d\times r}$, we always set the Euclidean metric as the Riemannian metric. We denote the tangent space and the normal space of $\Mcal$ at a point $x$ as $T_x\Mcal$ and $N_x\Mcal$, respectively. 
  
   Given $n$ agents $(x_1,\ldots,x_n)$, where $x_i\in \mathbb{R}^{d\times r}, i \in [n]$, we denote $\bx = (x_1^\top,\ldots,x_n^\top)^\top$, $\hat{\bx} =  (\hat{x}^\top,\ldots,\hat{x}^\top)^\top$ and $\bar{\bx}: =  (\bar{x}^\top,\ldots,\bar{x}^\top)^\top$, where $\hat{x}, \bar{x}$ are defined in \eqref{def:xhat} and \eqref{def:xbar}, respectively. We use $\|x\|$ to denote the Frobenius norm of $x$ and $\|\bx\|_{F,\infty} := \max_i \|x_i\|$. 
We also denote $f (\bx) =\sum_{i=1}^n f_i(x_i)$. Its Euclidean gradient is given by
$$
\begin{aligned}
\nabla f(\bx) = (\nabla f_1(x_1)^\top, \ldots, \nabla f_n (x_n)^\top)^\top,
\end{aligned}
$$
  where $\nabla f_i(x_i)$ denotes the Euclidean gradient of $f_i$ at $x_i$. If $x_i\in \Mcal, i\in [n]$, we denote the Riemannian gradient of $f$ as
  $$
  \grad f(\bx) = (\grad f_1(x_1)^\top, \ldots, \grad f_n (x_n)^\top)^\top,
  $$
  where $\grad f_i(x_i)$ denotes the Riemannian gradient of $f_i$ at $x_i$ defined in \eqref{def:rie-grad}.
  We denote the $n$-fold Cartesian product of $\Mcal$ as $\Mcal^n = \underbrace{\Mcal \times \cdots \times \Mcal}_{n}$.

\section{Preliminary}

In the context of decentralized optimization, the communication accessibility across the agents is modeled by an undirected connected network graph $G = (\mathcal{V};\mathcal{E})$ with $|\mathcal{V}| = n$. Let $W$ be the adjacency matrix of the graph. We then make the following standard assumption on $W$ \cite{zeng2018nonconvex,chen2021decentralized}.
\begin{assumption} \label{assum-w}
     We assume that the mixing matrix $W$ satisfies the following conditions:
\begin{itemize}
    \item[(i)] $W_{ij}\geq 0$ for any $i, j\in [n]$ and $W_{ij}=0$  if and only if  $(i,j)\not\in {\mathcal E}$.
    \item[(ii)] $W = W^\top$ and $W \mathbf{1}_n = \mathbf{1}_n$.
    \item[(iii)] The null space of $(I-W)$ is $\operatorname{span}(\mathbf{1}_n):=\{c {\bf 1}_n: c \in \R\}$.    
\end{itemize}
\end{assumption}
It follows from \cite{pillai2005perron} that the second largest singular value of $W$, denoted as $\sigma_2(W)$, is strictly less than 1. To simplify the notation, we use $\sigma_2$ to represent $\sigma_2(W)$. 
\subsection{Manifold optimization}
Manifold optimization has attracted much attention in the past few decades, as evident in works such as   \cite{absil2009optimization,hu2020brief,boumal2023introduction}. The goal of manifold optimization is to minimize a real-valued function over a manifold, i.e.,
\be\label{pro:manifold} \min_{x \in \Mcal} \quad h(x), \ee
where $\Mcal$ is a Riemannian manifold and $h:\Mcal \rightarrow \R$ is a real-valued function. The Riemannian gradient $\grad  h(x) \in T_x\mathcal{M}$ is the unique tangent vector satisfying
\be\label{def:rie-grad}  \left< \grad h(x), \xi \right> = dh(x)[\xi],\; \forall \xi\in T_x\mathcal{M}.\ee
If $\Mcal$ is a submanifold embedded in $\R^{d\times r}$ and the function $h$ can be extended to $\R^{d\times r}$, then the Riemannian gradient of $h$ at $x$ can be computed as 
$$\grad h(x) = \Pcal_{T_{x}\Mcal}(\nabla h(x)),$$
where $\Pcal_{T_x\Mcal}$ represents the orthogonal projection onto $T_x \Mcal$.  We say $x^*$ is a stationary point of \eqref{pro:manifold} if $\grad h(x^*) = 0$.

\subsection{Proximal smoothness} 
The notion of proximal smoothness, as introduced by \cite{clarke1995proximal}, refers to the characteristic of a closed set whereby the nearest-point projection becomes a singleton when the point is close enough to the set. Specifically, for any positive real number $\gamma$, we define the $\gamma$-tube around $\mathcal{M}$ as 
 $$
 U_{\mathcal{M}}(\gamma): = \{x:{\rm dist}(x,\mathcal{M}) <  \gamma\}.$$ 
We say a closed set $\mathcal{M}$ is $\gamma$-proximally smooth if the projection operator $\Pcal_{\mathcal{M}}(x)$ is a singleton whenever $x\in U_{\Mcal}(\gamma)$. 
It is worth noting that any compact $C^2$-submanifold of $\mathbb{R}^{d\times r}$ is a proximally smooth set \cite{clarke1995proximal,balashov2021gradient,davis2020stochastic}. For instance, the Stiefel manifold is a set that is $1$-proximally smooth. Throughout this paper, we assume that $\Mcal$ is $2\gamma$-proximally smooth. By  following the proof in \cite[Theorem 4.8]{clarke1995proximal}, a $2\gamma$-proximally smooth set $\mathcal{M}$ satisfies the following property:
\be \label{eq:lip-proj-alpha}
\left\| \Pcal_{\mathcal{M}} (x) -\Pcal_{\mathcal{M}} (y)\right\| \leq 2 \|x - y\|,~~ \forall x,y \in \bar{U}_{\mathcal{M}}(\gamma), 
\ee
where $\bar{U}_{\Mcal}(\gamma):=\{x: {\rm dist}(x,\Mcal) \leq  \gamma\}$ is the closure of $U_{\Mcal}(\gamma)$. 
Moreover, for any point $x \in \mathcal{M}$ and a normal $v \in$ $N_{x} \mathcal{M}$, it holds that
\be \label{eq:normal-bound}
\iprod{v}{y-x} \leq \frac{\|v\|}{4\gamma} \|y-x\|^2, \quad \forall y \in \mathcal{M}, 
\ee
This is often referred to as the normal inequality \cite{clarke1995proximal,davis2020stochastic}.

\subsection{The Douglas-Rachford splitting method}
The DRS is recognized as a famous and efficient splitting algorithm in solving convex and nonconvex optimization problems. Recently, its primal equivalence with the more popular ADMM is established in \cite{themelis2020douglas}.
Consider the following composite optimization problem:
\be\label{prob:two block}
\min_{x\in \mathbb{R}^p} \varphi_1(x) + \varphi_2(x),
\ee
where $\varphi_1,\varphi_2:\mathbb{R}^p \rightarrow \mathbb{R}$ are proper, lower semicontinuous, extended real-valued
functions. Starting from some $x_k,s_k,z_k\in\mathbb{R}^p$,
one iteration of the DRS applied to \eqref{prob:two block} with stepsize $\alpha$
amounts to
\begin{equation}
\left\{
    \begin{aligned}
      s_{k+1} & = s_k + z_k - x_k,\\
      x_{k+1} & = \prox_{\alpha \varphi_1}(s_{k+1}),\\
      z_{k+1} & = \prox_{\alpha \varphi_2}(2x_{k+1} - s_{k+1}),
    \end{aligned}
    \right.
\end{equation}
where $\prox_{\alpha \varphi_1}$ is a proximal operator of $\varphi_1$ defined by
\be
\prox_{\alpha \varphi_1}(x) = \arg\min_{y\in \mathbb{R}^p} \varphi_1(y) + \frac{1}{2\alpha}\| y - x\|.
\ee

The authors in \cite{li2016douglas} present the first general analysis of global convergence of the classical DRS for fully nonconvex problems where one function is Lipschitz differentiable. The authors in \cite{themelis2020douglas} consider the relaxed DRS and give a tight convergence result. Their convergence analysis is based on the Douglas-Rachford envelope (DRE), first introduced in \cite{patrinos2014douglas} for convex problems and generalized to nonconvex cases. In particular, the DRE of \eqref{prob:two block} is defined as
\begin{equation}\label{def:dre}
\begin{aligned}
  &  \varphi_{\alpha}^{\rm DR}(x):=\\
    & \min_{w\in \mathbb{R}^p} \left\{ \varphi_2(w) + \varphi_1(u) + \llangle \nabla \varphi_1(u), w-u \rrangle + \frac{1}{2\alpha}\|w - u\|^2 \right\},
    \end{aligned}
\end{equation}
where $u: = \prox_{\alpha \varphi_1}(x)$. 
They show that the DRE serves as an exact and continuously differentiable merit function for the original problem.

\subsection{Stationary point}
Let $x_1,\ldots,x_n\in \Mcal$ represent the local copies of $x$ at each agent. We denote $\hat{x}$ as their Euclidean average point, given by
\be\label{def:xhat}
\hat{x}: = \frac{1}{n} \sum_{i=1}^n x_i.
\ee
Let $\Pcal_{\Mcal}$ be the orthogonal projection to $\Mcal$. We define $\bar{x}$ is an element in $\Pcal_{\Mcal}(\hat{x})$, i.e.,
\be\label{def:xbar} \bar{x} \in  {\rm  argmin}_{y\in\Mcal}\sum_{i=1}^n \|y - x_i\|^2 = \Pcal_{\Mcal}(\hat{x}). \ee
Any element $\bar{x}$ in $\Pcal_{\Mcal}(\hat{x})$ is the induced arithmetic mean of $\{x_i\}_{i=1}^n$ on $\Mcal$ \cite{sarlette2009consensus}.
 The $\epsilon$-stationary point of problem \eqref{prob:original} is defined as follows.
\begin{definition} \label{def:station}
The set of points $\{x_1,x_2,\cdots,x_n\} \subset \Mcal$ is called an $\epsilon$-stationary point of \eqref{prob:original} if there exists an $\bar{x} \in \Pcal_{\Mcal}(\hat{x})$ such that
\[  \| \bx - \bar{\bx}\|^2 \leq \epsilon \quad {\rm and} \quad \|\grad f(\bar{\bx})\|^2 \leq \epsilon. \]
\end{definition}
In the following development, we always assure that $\hat{x} \in \bar{U}_{\Mcal}(\gamma)$. Consequently, $\Pcal_{\Mcal}(\hat{x})$ is a singleton and we have $\bar{x} = \Pcal_{\Mcal}(\hat{x})$. 

\section{A decentralized Douglas-Rachford splitting method}
 In this section, we will present a decentralized DRS method for solving \eqref{prob:original}. We first give the notations as follows. We let $x_{i,k}$ denote the $i$-agent in the $k$-iteration. Denote $\hat{x}_k = \frac{1}{n}\sum_{i=1}^n x_{i,k}$ and $\bar{x}_k$ be the projection of $\hat{x}$ onto $\Mcal$. We also denote 
$ \bx_k = (x_{1,k}^\top,\cdots,x_{n,k}^\top)^\top$, $\hat{\bx}_k = (\hat{x}_k^\top,\cdots,\hat{x}_k^\top)^\top$ and $\bar{\bx}_k = (\bar{x}_k^\top,\cdots,\bar{x}_k^\top)^\top$. By Assumption \ref{assum-w}, the equality constraint $x_1 = \cdots = x_n$ can be reformulated as $(I_{nd} - \bW) \bx = 0$, 
where $I_{nd}$ is the $nd$-by-$nd$ identity matrix.  
% Then \eqref{prob:original}  can be rewritten as
% \be \label{prob} 
% \begin{aligned}
%     \min_{\bx \in \mathbb{R}^{nd \times r}} \quad &  f (\bx) \\
%     \st \quad & (I_{nd} - \bW) \bx = 0, \; \bx \in \Mcal^n.
% \end{aligned}
% \ee
 Let us define
 $$
 \mathcal{C}:=\{ \bx \in \Mcal^n : (I_{nd} - \bW)\bx = 0 \}
 $$
 and denote $\delta_\mathcal{C}$ by the indicator function of $\mathcal{C}$. Then, problem \eqref{prob:original} can be written as 
\be \label{prob:comp} \min_{\bx \in \mathbb{R}^{nd \times r}} \quad f (\bx) + \delta_{\mathcal{C}}(\bx). \ee
Note that the nearest-point projection of a point $\bx$ to $\mathcal{C}$ has an explicit formulation, namely,
\be \label{eq:proj} \Pcal_{\mathcal{C}}(\bx) = \argmin_{\{\by \in \mathcal{M}^n: y_1 = \cdots = y_n \}} \|\by - \bx\|^2 = \Pcal_{\Mcal^n}(\hat{\bx}) =: \bar{\bx}. \ee
Given $\bs_0, \bz_0, \bx_0 \in \Mcal^n$,  at the $k$-th iterate, 
a direct application of the DRS method for solving \eqref{prob:comp} has the following update scheme:
\be\label{eq:drs-}
\left\{ \begin{aligned}
    \bs_{k+1} & = \bs_k + \bz_k - \bx_k, \\
    \bx_{k+1} & = \prox_{\alpha f}(\bs_{k+1}), \\
    \by_{k+1} & = 2 \bx_{k+1} - \bs_{k+1}, \\
    \bz_{k+1} & = \Pcal_{\mathcal{C}}(\by_{k+1}). 
\end{aligned} \right.
\ee
Let $\hat{y}_k = \frac{1}{n}\sum_{i=1}^n y_{i,k}$. The agent-wise version of \eqref{eq:drs-} can be written as: for any $i\in [n]$,
\be \label{eq:drs-agent}
\left\{ \begin{aligned}
    s_{i,k+1} & = s_{i,k} + z_{i,k} - x_{i,k}, \\
    x_{i,k+1} & = \prox_{\alpha f_i}(s_{i,k+1}), \\
    y_{i,k+1} & = 2 x_{i,k+1} - s_{i,k+1}, \\
    z_{i,k+1} & = \Pcal_{\Mcal}(\hat{y}_{k+1}). 
\end{aligned} \right.
\ee
Note that in the update of $z_{i,k+1}$, the $i$-th agent needs to collect $\{y_{i,k+1}\}_{i=1}^n$. This can be easily achieved in the centralized setting, but could be a critical issue for the decentralized setting where only partial communication along the graph is allowed.

To address the above problem in the $z$-update, a natural way is to investigate the local average instead of the global average. However, due to the existence of the nonconvexity of $\Mcal$, a careful design to control the approximation error is needed. Before introducing our approaches, let us rewrite the $z$-update in a form with a more explicit dependence on $x$. It is easily shown that the update of $x_{i,k+1}$ implies that 
$$s_{i,k+1} = x_{i,k+1} + \alpha \nabla f_i(x_{i,k+1}).$$ This together with the update of $y_{i,k+1}$ yields 
\begin{equation}
    y_{i,k+1} = x_{i,k+1} - \alpha \nabla f_i(x_{i,k+1}),\; i\in [n].
\end{equation}
Therefore, the last two rows in \eqref{eq:drs-} can be simplified as follows:
\begin{equation}
    z_{i,k+1} = \Pcal_{\Mcal}\left( \frac{1}{n} \sum_{j=1}^n \left(x_{j,k+1} - \alpha \nabla f_j(x_{j,k+1}) \right) \right). 
\end{equation}
Based on the above formulation, and utilizing the adjacency matrix $W$ under Assumption \ref{assum-w}, we approximate $\frac{1}{n}\sum_{j=1}^nx_{j,k+1}$ by $\sumj \wt x_{j,k+1}$, where $t$ is an integer, denoting the communication rounds.  To obtain a better performance, we adopt the gradient tracking techniques \cite{nedic2017achieving,qu2017harnessing,chen2021decentralized} on the gradient $-\frac{\alpha}{n}  \sum_{j=1}^n \nabla f_j(x_{j,k+1}) = \frac{1}{n}  \sum_{j=1}^n (x_{j,k+1} - s_{j,k+1}) $, i.e., 
\be \label{eq:z-track} d_{i,k+1} = \sum_{j=1}^n W_{ij}^t d_{i,k} + x_{i,k+1} - s_{i,k+1} - (x_{i,k} - s_{i,k}), \ee
where $d_{i,0}:= x_{0,k} -s_{0,k}$.
Then, these approximations give a modified and operational update:
\be\label{eq:update-z} z_{i,k+1} = \Pcal_{\Mcal}\left( \sumj \wt x_{j,k+1} + d_{i,k+1} \right). \ee

With \eqref{eq:z-track} and \eqref{eq:update-z}, our decentralized DRS method performs the following update in the $k$-th iteration, for $i=1,\ldots, n$,
\be \label{eq:drs-decen}
\left\{ \begin{aligned}
 s_{i,k+1} & = s_{i,k} + z_{i,k} - x_{i,k},\\
    x_{i,k+1} & = \prox_{\alpha f_i}(s_{i,k+1}), \\
    %y_{i,k+1} & = 2 x_{i,k+1} - s_{i,k} \\
    d_{i,k+1} & = \sum_{j=1}^n W_{ij}^t d_{j,k} + x_{i,k+1} - s_{i,k+1} - (x_{i,k} - s_{i,k}), \\
    z_{i,k+1} & = \Pcal_{\Mcal}\left(\sum_{j=1}^n W_{ij}^t x_{j,k+1} + d_{i,k+1} \right).
\end{aligned}  \right.
\ee
The detailed description is given in Algorithm \ref{alg:drgta}.
\begin{algorithm}[htbp]
\caption{Decentralized DRS method for solving \eqref{prob:original} (\textbf{DDRS})} \label{alg:drgta}
\begin{algorithmic}[1]
\REQUIRE  Initial point $\bs_0, \bz_0 \in \mathcal{M}^n$, an integer $t$, the step size $\alpha$. 
\STATE Let $x_{i,0} = s_{i,0}$, $d_{i,0} = x_{i,0} - s_{i,0}$ and $y_{i,0} = 2x_{i,0} - s_{i,0}$ on each node $i\in [n]$.
\FOR {$k=0,\cdots$ (for each node $i\in [n]$, in parallel)}
\STATE Update $s_{i,k+1}  = s_{i,k} + z_{i,k} - x_{i,k}$. 
\STATE Update $ x_{i,k+1}  = \prox_{\alpha f_i}(s_{i,k+1})$.  
\STATE Update $y_{i,k+1}  = 2 x_{i,k+1} - s_{i,k+1}$.
\STATE Perform gradient tracking:
$$
d_{i,k+1} = \sum_{j=1}^n W_{ij}^t d_{j,k} + x_{i,k+1} - s_{i,k+1} - (x_{i,k} - s_{i,k}).
$$
\STATE Update $z_{i,k+1} = \Pcal_{\Mcal}\left(\sum_{j=1}^n W_{ij}^t x_{j,k+1} + d_{i,k+1} \right)$. 
\ENDFOR
\end{algorithmic}
\end{algorithm}
As will be seen in the next section, a sufficiently large integer $t$ is important to tackle the nonconvexity from the manifold constraint. Basically speaking, a large $t$ will guarantee that the iterates remain in the proximally smooth neighborhood of $\Mcal$, which allows us to utilize the convex-like properties, \eqref{eq:lip-proj-alpha} and \eqref{eq:normal-bound}. Moreover, $\{y_{i,k}\}$ is an auxiliary sequence, which is useful for the subsequent analysis.

\section{Convergence analysis}
In this section, we present the global iteration complexity of our decentralized DRS method. %For ease of notation, let us denote  $\nabla f(\hat{\bx}): = (\nabla f_1(\hat{x}^\top,\cdots,\nabla f_n(\hat{x})^\top)^\top\in \mathbb{R}^{nd\times r}$ and $\nabla f(\bar{\bx}): = (\nabla f_1(\bar{x})^\top,\cdots,\nabla f_n(\bar{x})^\top)^\top\in \mathbb{R}^{nd\times r}$. 
 We first make the following assumptions.
\begin{assumption} \label{assum:f}
    For any given $i$, the function $f_i$ is $L$-smooth, i.e., for any $x,y\in \mathbb{R}^{d\times r}$,
    \be\label{eq:egrad-lip}
        \|\nabla f_i(x) - \nabla f_{i}(y) \| \leq L_f \|x- y\|.
    \ee
    Moreover, we assume that $\zeta: = \max_{x,y\in \Mcal} \|x - y\|$ is a finite constant. 
\end{assumption}
 Using \eqref{eq:egrad-lip}, we can readily obtain a quadratic upper bound for $f_i$: for $x,y\in {\rm conv}(\Mcal)$, it holds that
\be\label{eq:grad-Lip}
f_i(y) \leq f_i(x) + \left<\nabla f_i(x), y-x \right> + \frac{L_f}{2} \| y - x\|^2,  \;\; i\in [n].
\ee
Moreover, it follows from \cite[Lemma 4.2]{deng2023decentralized} that there exists $L>L_f$ such that
\begin{equation}\label{g-riemannian-lip}
    \| \grad f_i(x) - \grad f_i(y) \| \leq L \|x - y\|,~i\in [n],
\end{equation}

Due to the proximal smoothness of the manifold constraint, the update of the variable $z_{i,k+1}$ is well-defined only when the term $\sum_{j=1}^n W_{ij}^t x_{j,k+1} + d_{i,k+1}$ lies within the neighborhood $\bar{U}_{\Mcal}(\gamma)$, i.e., for all $k \geq 0$,
\be\label{eq:wx+d}
\sum_{j=1}^n W_{ij}^t x_{j,k+1} + d_{i,k+1} \in \bar{U}_{\Mcal}(\gamma).
\ee
Therefore, before demonstrating the main convergence result of Algorithm \ref{alg:drgta}, we will prove that \eqref{eq:wx+d} holds under some mild conditions.
Let us define several constants that will be used in the next analysis, namely, 
\be\label{def:delta}\delta_1: = \frac{\gamma}{4},\; \delta_2:= \frac{\delta_1}{12},\; \delta_3: = 2\delta_2 + \zeta,\ee
where $\gamma$ occurs in \eqref{eq:lip-proj-alpha}.  Let $\mathcal{N}_1,\mathcal{N}_2$ be two neighborhoods defined by
\be\label{def:neiborhood}
\begin{aligned}
    \mathcal{N}_1: & = \{\bx\in \mathbb{R}^{nd\times r}~: ~ \|\hat{x} - \bar{x}\| \leq \delta_1\}, \\
    \mathcal{N}_2: &= \{\bx\in \mathbb{R}^{nd\times r}~: ~ \|\hat{x} - \bar{x}\| \leq 10\delta_2\}.
\end{aligned}\ee

The next lemma demonstrates that under certain conditions on $\alpha, t$, if $\bx_0 \in \mathcal{N}_1$ and $\bs_0 \in \mathcal{N}_2$, then for all $k$, it holds that $\bx_k\in \mathcal{N}_1$ and $\sum_{j=1}^n W_{ij}^tx_{i,k} + d_{i,k}$ remains within the neighborhood $\bar{U}_{\Mcal}(\gamma)$. This latter result allows us to invoke the Lipschitz continuity \eqref{eq:lip-proj-alpha} of $\Pcal_{\Mcal}$ over $\bar{U}_{\Mcal}(\gamma)$ in the subsequent analysis. We provide the proof in Appendix \ref{appen-1}.
\begin{lemma} \label{lem:stay-x}
  Suppose that Assumption \ref{assum-w} and \ref{assum:f} hold.  Let $\{\bs_k,\bx_k,\by_k,\bz_k\}$ be generated by Algorithm \ref{alg:drgta} with 
  $$
  \begin{aligned}
  0 & < \alpha  \leq \min \left\{\frac{1}{2L}, \frac{\delta_2}{3\|\nabla f(0)\| + 2L\left(\zeta + \delta_2\right)} \right\}, \\
  t & \geq \left \lceil \max\left\{   \log_{\sigma_2}(\frac{1}{4\sqrt{n}}), \log_{\sigma_2}(\frac{\delta_3}{\delta_2 \sqrt{n}})  \right\} \right \rceil.
  \end{aligned}
  $$
  If $\|\mathbf{d}_0\|_{F,\infty} \leq 4\delta_2$,  $\|\bs_0\|_{F,\infty} \leq \zeta + \delta_2$, $\bx_0 \in \mathcal{N}_1$ and $\bz_0 \in \mathcal{N}_2$, then it holds that for any integer $k>0$,
    \be \label{bound-dk-xk-zk} \bx_k \in \mathcal{N}_1,\;  \quad {\rm and} \quad \bz_k \in \mathcal{N}_2, \ee
   where $\delta_1,\delta_2,\delta_3$ are defined in \eqref{def:delta} and $\mathcal{N}_1,\mathcal{N}_2$ are defined in \eqref{def:neiborhood}.  Moreover, we have that for any integer $k>0$,
    \begin{align}
    \sum_{j=1}^n {W_{ij}^t} x_{j,k} +d_{i,k} & \in \bar{U}_{\Mcal} (\gamma), ~ i\in [n]. \label{eq:neibohood2}
\end{align}
\end{lemma}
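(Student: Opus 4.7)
The strategy is induction on $k$, simultaneously propagating a bundle of four invariants that mutually support each other: (I1) $\bx_k\in\mathcal{N}_1$; (I2) $\bz_k\in\mathcal{N}_2$; (I3) $\|\mathbf{d}_k\|_{F,\infty}\le 4\delta_2$ and $\|\bs_k\|_{F,\infty}\le \zeta+\delta_2$; (I4) the ``in-tube'' condition $\sum_j W_{ij}^t x_{j,k}+d_{i,k}\in \bar{U}_{\Mcal}(\gamma)$ for every $i$. The base case $k=0$ is exactly the hypothesis of the lemma together with $d_{i,0}=x_{i,0}-s_{i,0}$ and $\hat{x}_0\in \bar{U}_\Mcal(\delta_1)\subset\bar{U}_\Mcal(\gamma)$. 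For the inductive step, I would use (I1)--(I4) at iteration $k$ to establish the corresponding invariants at iteration $k+1$ in four stages, using crucially that by the choice of $t$, one application of $W^t$ contracts the deviation from the consensus mean by a factor of at most $\sigma_2^t\le \tfrac{1}{4\sqrt{n}}$, and the choice of $\alpha$ keeps single-step perturbations below a small multiple of $\delta_2$.

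First I would establish a uniform bound on the proximal drift. From $x_{i,k+1}=\prox_{\alpha f_i}(s_{i,k+1})$ we get $s_{i,k+1}=x_{i,k+1}+\alpha\nabla f_i(x_{i,k+1})$, hence $\|x_{i,k+1}-s_{i,k+1}\|=\alpha\|\nabla f_i(x_{i,k+1})\|$. The smoothness bound \eqref{eq:egrad-lip} together with $\|s_{i,k}\|_{F,\infty}\le \zeta+\delta_2$ propagated from (I3) bounds $\|\nabla f_i(x_{i,k+1})\|$ by $\|\nabla f_i(0)\|+L(\zeta+\delta_2)$, and the hypothesis on $\alpha$ is chosen exactly so that this quantity times $\alpha$ is at most $\delta_2/3$. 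Substituting into the tracking recursion $d_{i,k+1}=\sum_j W_{ij}^t d_{j,k}+(x_{i,k+1}-s_{i,k+1})-(x_{i,k}-s_{i,k})$ and using the standard tracking identity $\tfrac{1}{n}\sum_i d_{i,k}=\tfrac{1}{n}\sum_i(x_{i,k}-s_{i,k})$, I would split $d_{i,k+1}$ into its mean and its consensus error, and use the contraction of $W^t$ to show $\|\mathbf{d}_{k+1}\|_{F,\infty}\le 4\delta_2$, restoring the first half of (I3). The bound $\|\bs_{k+1}\|_{F,\infty}\le \zeta+\delta_2$ follows from the $s$-update $\bs_{k+1}=\bs_k+\bz_k-\bx_k$ combined with (I1), (I2), and compactness of $\Mcal$ (recall $\bz_k$ lies near $\Mcal$ and $\bx_k$ near $\Mcal$, so $\|\bz_k-\bx_k\|$ is controlled by $\zeta$ up to small deviations).

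Next I would verify the in-tube property (I4) at $k+1$. Writing
\[
\sum_j W_{ij}^t x_{j,k+1}+d_{i,k+1} \;=\; \hat{x}_{k+1} + \Bigl(\sum_j W_{ij}^t x_{j,k+1}-\hat{x}_{k+1}\Bigr)+d_{i,k+1},
\]
the first correction is bounded by $\sigma_2^t \|\bx_{k+1}-\hat{\bx}_{k+1}\|$ (at most $\delta_3/\sqrt n$ by the choice of $t$, hence at most $\delta_2$), and the second by $4\delta_2$ by the refreshed (I3). Since (I1) propagated to $k+1$ gives $\|\hat{x}_{k+1}-\bar{x}_{k+1}\|\le\delta_1$, the whole sum sits in $\bar{U}_\Mcal(\delta_1+5\delta_2)\subset\bar{U}_\Mcal(\gamma)$, making the $\Pcal_\Mcal$-step in $z_{i,k+1}$ well-defined. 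With (I4) in hand I can now apply the Lipschitz bound \eqref{eq:lip-proj-alpha}: $\|z_{i,k+1}-\bar{x}_{k+1}\|\le 2(\|\sum_j W_{ij}^t x_{j,k+1}-\hat{x}_{k+1}\|+\|d_{i,k+1}\|)$; averaging across $i$ and comparing $\hat{z}_{k+1}$ to $\bar{z}_{k+1}$ via a second projection yields (I2) with exactly the factor $10\delta_2$ built into the definition of $\mathcal{N}_2$. Finally, (I1) at $k+1$ follows from the $s$-update and the prox step: $x_{i,k+1}-z_{i,k}=(\bs_{k+1}-\bs_k)+\alpha\nabla f_i(x_{i,k+1})-\bz_k+\bx_k$-type manipulations bound $\|\bx_{k+1}-\bz_k\|$ by a small multiple of $\delta_2$, and since $\bz_k\in\mathcal{N}_2$ projects close to $\Mcal$, one concludes $\|\hat{x}_{k+1}-\bar{x}_{k+1}\|\le\delta_1$.

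The main obstacle is that (I1), (I2), (I3), and (I4) are genuinely coupled: restoring (I1) at step $k+1$ needs (I2) at step $k$, restoring (I2) at $k+1$ needs (I4) at $k+1$, and (I4) at $k+1$ needs (I1) at $k+1$ together with a refreshed bound on $\mathbf{d}_{k+1}$. Untangling this requires carefully ordering the estimates so that no circularity arises, and matching the numerical constants $\delta_1=\gamma/4$, $\delta_2=\delta_1/12$, $\delta_3=2\delta_2+\zeta$ against the contraction factor $\sigma_2^t\le 1/(4\sqrt n)$ and the step bound on $\alpha$ so that each inequality closes with exactly the stated slack. This quantitative balancing is precisely what dictates the thresholds on $\alpha$ and $t$ appearing in the hypothesis.
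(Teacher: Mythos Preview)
Your proposal is correct and follows essentially the same inductive strategy as the paper: propagate the prox-drift and tracking bounds (your (I3)) via the identity $\hat d_k=\hat x_k-\hat s_k$, then use the $W^t$-contraction together with the $2$-Lipschitz projection on $\bar U_{\Mcal}(\gamma)$ to close the neighborhood invariants in the non-circular order $(\text{I2})_k\Rightarrow(\text{I1})_{k+1}\Rightarrow(\text{I4})_{k+1}\Rightarrow(\text{I2})_{k+1}$. The paper streamlines the bookkeeping by proving (I3) as a standalone lemma first (it decouples from the others because $z_{i,k}\in\Mcal$ is automatic as the output of $\Pcal_{\Mcal}$, regardless of single-valuedness), and one small correction: bounding $\|x_{i,k+1}-s_{i,k+1}\|$ requires a brief fixed-point step (the gradient is evaluated at $x_{i,k+1}$, not $s_{i,k+1}$) and the resulting bound is $\le\delta_2$, not $\delta_2/3$.
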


As shown in \cite{patrinos2014douglas}, the DRE defined in \eqref{def:dre} can serve as the potential function to analyze the convergence of the DRS method. In particular, given any $\bs$, we define $\bx = \prox_{\alpha f}(\bs)$, $\by = \bx - \alpha \nabla f (\bx)$ and $\bar{\by} = \Pcal_{\mathcal{C}}(\by)$. The DRE of \eqref{prob:comp} is defined as follows:
\be\label{eq:DR-env}
\begin{aligned}
\varphi_{\alpha}^{\rm DR}(\bs): &= f (\bx) + \min_{\bw\in \mathcal{C}} \left\{ \left<\nabla f (\bx), \bw - \bx\right> + \frac{1}{2\alpha}\|\bw - \bx\|^2
 \right\}, \\
 & = f (\bx) + \left<\nabla f (\bx), \bar{\by}  - \bx\right> + \frac{1}{2\alpha}\|\bar{\by}  - \bx\|^2.
 \end{aligned}
\ee
We then have the following descent lemma on $\varphi_{\alpha}$. We provide the proof in Appendix \ref{appen-1}.
\begin{lemma} \label{lem:descent}
     Suppose that Assumption \ref{assum:f} holds. Let $\{\bs_k,\bx_k,\by_k,\bz_k\}$ be generated by Algorithm \ref{alg:drgta}. Then, it holds that
     \be\label{eq:decent}
    \begin{aligned}
        &\varphi_{\alpha}^{\rm DR}(\bs_0) -\varphi_{\alpha}^{\rm DR}(\bs_{k+1}) 
        \geq   \sum_{\ell=0}^k \frac{1 - \alpha L - 2\alpha^2 L^2}{2\alpha} \|\bx_{\ell +1} - \bx_{\ell}\|^2 \\
        &- \frac{1 + \alpha L}{2 \alpha} \sum_{\ell=0}^k \left( \alpha \| \bx_{\ell +1} - \bx_{\ell} \|^2 + \frac{1}{\alpha} \|\bz_\ell - \bar{\by}_{\ell}\|^2 \right). 
    \end{aligned}
    \ee
\end{lemma}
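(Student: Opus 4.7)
The plan is to establish the per-step descent inequality
\begin{align*}
\varphi_{\alpha}^{\rm DR}(\bs_\ell) - \varphi_{\alpha}^{\rm DR}(\bs_{\ell+1}) &\geq \frac{1 - \alpha L - 2\alpha^2 L^2}{2\alpha}\|\bx_{\ell+1} - \bx_\ell\|^2 \\
&\quad - \frac{1+\alpha L}{2\alpha}\Bigl(\alpha\|\bx_{\ell+1} - \bx_\ell\|^2 + \tfrac{1}{\alpha}\|\bz_\ell - \bar{\by}_\ell\|^2\Bigr),
\end{align*}
and then telescope over $\ell = 0,\dots,k$, which gives the claim.

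The key preparation is to express the DRE at $\bs_\ell$ and at $\bs_{\ell+1}$ using the same auxiliary point $\bar{\by}_\ell \in \mathcal{C}$. Since $\bx_\ell = \prox_{\alpha f}(\bs_\ell)$ satisfies $\bs_\ell = \bx_\ell + \alpha\nabla f(\bx_\ell)$, the vector $\by_\ell = \bx_\ell - \alpha\nabla f(\bx_\ell)$ is precisely the center of the linearized quadratic in the DRE definition \eqref{eq:DR-env}, and its projection $\bar{\by}_\ell = \Pcal_{\mathcal{C}}(\by_\ell)$ attains the minimum, giving
\[\varphi_{\alpha}^{\rm DR}(\bs_\ell) = f(\bx_\ell) + \langle\nabla f(\bx_\ell),\bar{\by}_\ell-\bx_\ell\rangle + \frac{1}{2\alpha}\|\bar{\by}_\ell-\bx_\ell\|^2.\]
Since $\bar{\by}_\ell \in \mathcal{C}$, it is also admissible for the minimum that defines $\varphi_{\alpha}^{\rm DR}(\bs_{\ell+1})$, yielding the upper bound
\[\varphi_{\alpha}^{\rm DR}(\bs_{\ell+1}) \leq f(\bx_{\ell+1}) + \langle\nabla f(\bx_{\ell+1}),\bar{\by}_\ell-\bx_{\ell+1}\rangle + \frac{1}{2\alpha}\|\bar{\by}_\ell-\bx_{\ell+1}\|^2.\]

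Subtract these two and bound $f(\bx_{\ell+1})-f(\bx_\ell)$ via the $L$-smoothness of $f$. The three inner products then collapse into the single term $\langle\nabla f(\bx_{\ell+1})-\nabla f(\bx_\ell),\,\bar{\by}_\ell-\bx_{\ell+1}\rangle$. The crucial algorithmic identity, obtained by combining $\bs_{\ell+1}=\bs_\ell+\bz_\ell-\bx_\ell$ with the prox-optimality identities at both iterates, is
\[\bz_\ell - \bx_{\ell+1} = \alpha\bigl(\nabla f(\bx_{\ell+1})-\nabla f(\bx_\ell)\bigr), \qquad \|\bz_\ell-\bx_{\ell+1}\|\leq\alpha L\|\bx_{\ell+1}-\bx_\ell\|.\]
Using this, the inner product becomes $\tfrac{1}{\alpha}\langle\bz_\ell-\bx_{\ell+1},\,\bar{\by}_\ell-\bx_{\ell+1}\rangle$; splitting $\bar{\by}_\ell-\bx_{\ell+1} = (\bar{\by}_\ell-\bz_\ell)+(\bz_\ell-\bx_{\ell+1})$ and applying Young's inequality isolates the inexact-projection error $\|\bz_\ell-\bar{\by}_\ell\|^2$. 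The remaining quadratic difference follows from the polarization identity
\[\|\bar{\by}_\ell-\bx_{\ell+1}\|^2 - \|\bar{\by}_\ell-\bx_\ell\|^2 = \|\bx_{\ell+1}-\bx_\ell\|^2 - 2\langle\bar{\by}_\ell-\bx_\ell,\,\bx_{\ell+1}-\bx_\ell\rangle,\]
with a second Young step absorbing the residual cross term.

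The main obstacle is the bookkeeping at the end: the Young parameters must be tuned so that the coefficient of $\|\bx_{\ell+1}-\bx_\ell\|^2$ aggregates to exactly $\frac{1-\alpha L-2\alpha^2 L^2}{2\alpha}$ and that of $\|\bz_\ell-\bar{\by}_\ell\|^2$ to $\frac{1+\alpha L}{2\alpha^2}$, while canceling all residual $\|\bar{\by}_\ell-\bx_{\ell+1}\|^2$ and $\|\bar{\by}_\ell-\bx_\ell\|^2$ terms. The bound $\|\bz_\ell-\bx_{\ell+1}\|\leq\alpha L\|\bx_{\ell+1}-\bx_\ell\|$ suggests using the Young parameter $\alpha L$, which naturally generates the factors $1\pm\alpha L$ in the final coefficients. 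Notably, nonconvexity of $\mathcal{C}$ plays no role in this argument: only membership $\bar{\by}_\ell\in\mathcal{C}$ and global smoothness of $f$ are used; uniqueness of $\bar{\by}_\ell$ itself is delegated to Lemma \ref{lem:stay-x}, which ensures that all iterates remain in $\bar{U}_{\Mcal}(\gamma)$.
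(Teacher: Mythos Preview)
Your proposal is correct and follows essentially the same route as the paper's proof: upper-bound $\varphi_{\alpha}^{\rm DR}(\bs_{\ell+1})$ by plugging the feasible point $\bar{\by}_\ell\in\mathcal{C}$ into the DRE minimum, apply $L$-smoothness of $f$, invoke the algorithmic identity $\bz_\ell-\bx_{\ell+1}=\alpha(\nabla f(\bx_{\ell+1})-\nabla f(\bx_\ell))$ (which the paper writes in the equivalent form \eqref{eq:est-xy}), and close with Cauchy--Schwarz/Young before telescoping. The only cosmetic difference is that you center the bookkeeping at $\bx_{\ell+1}$ whereas the paper expands around $\bx_\ell$; this changes a few intermediate signs but leads to the same per-step bound \eqref{eq:dec-2}.
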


Putting the above results together, we establish the following convergence rate of $\mathcal{O}(1/k)$ to reach a stationarity. We provide the proof in Appendix \ref{appen-1}.
\begin{theorem}\label{them-main}
    Suppose that Assumptions \ref{assum-w} and \ref{assum:f} hold. Let $\{\bs_k,\bx_k,\by_k,\bz_k\}$ be generated by Algorithm \ref{alg:drgta} with 
    $$
    \begin{aligned}
    0 < \alpha & \leq \min\{\frac{1}{2(1+2L+\mathcal{C}_1L^2)}, \frac{\delta_2}{3\|\nabla f(0)\| + 2L(\zeta + \delta_2)} \},\\
t & \geq  \left \lceil \max\{ \log_{\sigma_2}(\frac{1}{4\sqrt{n}}), \log_{\sigma_2}(\frac{\delta_3}{\delta_2 \sqrt{n}}), \log_{\sigma_2} \frac{1}{12\sqrt{n}}\} \right \rceil.
    \end{aligned}
    $$
    Let $f^*$ be the optimal value of \eqref{prob:comp}. If $\|\mathbf{d}_0\| \leq 4\delta_2$, $\|\bs_0\|_{F,\infty} \leq \zeta + \delta_2$, $\bx_0 \in \mathcal{N}_1$ and $\bz_0 \in \mathcal{N}_2$, for any $k\in \mathbb{N}$, it holds that 
    \be \label{eq:station-consen}
    \begin{aligned}
      &  \min_{ 0 \leq \ell \leq k} \|\bx_\ell - \bar{\bx}_{\ell}\|  \\
      \leq & 
     \frac{8\alpha(\mathcal{C}_1\alpha^2L^2+4)}{k+1}   \left(\varphi_{\alpha}^{\rm DR}(\bx_0, \bar{\by}_0) - f^* + \frac{\mathcal{C}_2}{\alpha^2} \right) + \frac{2\mathcal{C}_2}{k+1},
       \end{aligned}
         \ee
\be\label{eq:station-grad}
\begin{aligned}
       & \min_{ 0 \leq \ell \leq k} \|\grad f(\bar{\bx}_\ell) \| \\
       & \leq \frac{72(\mathcal{C}_1 \alpha^2 L^2 + 4)}{(k+1) \alpha} \left(\varphi_{\alpha}^{\rm DR}(\bx_0, \bar{\by}_0) - f^* + \frac{\mathcal{C}_2}{\alpha^2} \right)+ \frac{18 \mathcal{C}_2}{(k+1)\alpha^2},
    \end{aligned}
    \ee
    where
    \be\label{def:c-constant}
\begin{aligned}
\mathcal{C}_1: = &\frac{32}{(1-4\sigma_2^t)^2} ( 4\sigma_2^t + \frac{4}{(1-\sigma_2^t)^2}),\\
\mathcal{C}_2: = & \frac{4}{1-16\sigma_2^{2t}} \| \bz_{0} - \bar{\by}_{0} \|^2 \\
& + \frac{128}{(1-4\sigma_2^t)^2(1-\sigma_2^{2t})}\| \bd_{0} - (\hat{\bx}_{0} - \hat{\bs}_0) \|^2.
\end{aligned}
    \ee
\end{theorem}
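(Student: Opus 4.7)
My strategy is to use the Douglas--Rachford envelope descent in Lemma \ref{lem:descent} as the backbone, absorb its negative summand $\sum_\ell \|\bz_\ell - \bar{\by}_\ell\|^2$ into the positive summand $\sum_\ell \|\bx_{\ell+1} - \bx_\ell\|^2$ via consensus-plus-tracking estimates controlled by $\sigma_2^t$, and then translate the resulting telescoped bound on $\sum_\ell\|\bx_{\ell+1}-\bx_\ell\|^2$ into the two inequalities \eqref{eq:station-consen} and \eqref{eq:station-grad}.

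The central technical step is to bound $\|\bz_\ell - \bar{\by}_\ell\|$. By Lemma \ref{lem:stay-x}, both $\sum_j W_{ij}^t x_{j,\ell} + d_{i,\ell}$ and $\hat{y}_\ell$ lie in $\bar{U}_\Mcal(\gamma)$, so the $2$-Lipschitz bound \eqref{eq:lip-proj-alpha} on $\Pcal_\Mcal$, combined with the prox identity $\bs_\ell = \bx_\ell + \alpha\nabla f(\bx_\ell)$ (which gives $\hat{\by}_\ell = \hat{\bx}_\ell - (\hat{\bx}_\ell - \hat{\bs}_\ell)$), yields
\[
\|\bz_\ell - \bar{\by}_\ell\| \leq 2\|\bW^t\bx_\ell - \hat{\bx}_\ell\| + 2\|\bd_\ell - (\hat{\bx}_\ell - \hat{\bs}_\ell)\|.
\]
The first term is bounded by $2\sigma_2^t\|\bx_\ell - \hat{\bx}_\ell\|$, and I control $\|\bx_\ell - \hat{\bx}_\ell\|$ by decomposing through $\bz_{\ell-1}\in \Mcal^n$: $\|\bx_\ell - \hat{\bx}_\ell\| \lesssim \|\bx_\ell - \bz_{\ell-1}\| + \|\bz_{\ell-1} - \bar{\by}_{\ell-1}\|$, with the first piece recognized through $\bs_{\ell+1}-\bs_\ell = \bz_\ell - \bx_\ell$ and the prox to be of order $\|\bx_{\ell+1}-\bx_\ell\|$. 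The tracking residual $\bd_\ell - (\hat{\bx}_\ell - \hat{\bs}_\ell)$ obeys the contractive recursion
\[
\bd_{\ell+1} - (\hat{\bx}_{\ell+1}-\hat{\bs}_{\ell+1}) = \bW^t\bigl(\bd_\ell - (\hat{\bx}_\ell-\hat{\bs}_\ell)\bigr) + \bW^t\bigl((\bx_{\ell+1}-\bs_{\ell+1}) - (\bx_\ell-\bs_\ell)\bigr) - \bigl((\hat{\bx}_{\ell+1}-\hat{\bs}_{\ell+1}) - (\hat{\bx}_\ell-\hat{\bs}_\ell)\bigr),
\]
so a standard discrete Young inequality gives $\sum_\ell \|\bd_\ell - (\hat{\bx}_\ell - \hat{\bs}_\ell)\|^2 \lesssim \tfrac{1}{1-\sigma_2^{2t}}\|\bd_0-(\hat{\bx}_0-\hat{\bs}_0)\|^2 + \tfrac{\sigma_2^{2t}}{(1-\sigma_2^{2t})^2}\sum_\ell\|\bx_{\ell+1}-\bx_\ell\|^2$. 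Combining, and using that $t$ is large enough to enforce $4\sigma_2^t<1$ so the geometric series converges, I obtain the key absorption inequality
\[
\sum_{\ell=0}^k \|\bz_\ell - \bar{\by}_\ell\|^2 \leq \mathcal{C}_1\,\alpha^2 L^2 \sum_{\ell=0}^k \|\bx_{\ell+1}-\bx_\ell\|^2 + \mathcal{C}_2,
\]
with $\mathcal{C}_1,\mathcal{C}_2$ exactly as in \eqref{def:c-constant}.

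Substituting this into \eqref{eq:decent} and choosing $\alpha$ as in the theorem to make the net coefficient of $\sum_\ell\|\bx_{\ell+1}-\bx_\ell\|^2$ at least $1/(8\alpha(\mathcal{C}_1\alpha^2L^2+4))$, then invoking the envelope lower bound $\varphi_\alpha^{\rm DR}(\bs_{k+1})\geq f^*$ (which follows from \eqref{eq:DR-env} and \eqref{eq:grad-Lip} once $\alpha\leq 1/L$), a telescoping argument gives $\sum_{\ell=0}^k\|\bx_{\ell+1}-\bx_\ell\|^2 \leq 8\alpha(\mathcal{C}_1\alpha^2L^2+4)\bigl(\varphi_\alpha^{\rm DR}(\bs_0) - f^* + \mathcal{C}_2/\alpha^2\bigr)$. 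For \eqref{eq:station-consen} I would then combine this with $\|\bx_\ell - \bar{\bx}_\ell\| \leq \|\bx_\ell - \bz_{\ell-1}\| + \|\bz_{\ell-1}-\bar{\by}_{\ell-1}\| + \|\bar{\by}_{\ell-1}-\bar{\bx}_\ell\|$, each summand already under control. For \eqref{eq:station-grad} I would use the prox identity $\nabla f(\bx_\ell) = (\bs_\ell - \bx_\ell)/\alpha$ together with the normal inequality \eqref{eq:normal-bound} to cancel the normal component of $\nabla f(\bar{\bx}_\ell)$, bounding the tangential remainder $\grad f(\bar{\bx}_\ell)$ in the same currency; the rate $\mathcal{O}(1/(k+1))$ then follows from $\min_\ell \leq \tfrac{1}{k+1}\sum_\ell$.

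The main obstacle is precisely the absorption step: the proximal smoothness of $\Mcal$ forces the projection to be only $2$-Lipschitz, so what would be a one-step $\sigma_2^t$ contraction in the Euclidean decentralized setting becomes $2\sigma_2^t$, and iterated bounds pick up $4\sigma_2^{2t}$ factors. Requiring these composite factors to be strict contractions is what pins down the logarithmic lower bound on $t$ (in particular the $\log_{\sigma_2}\tfrac{1}{12\sqrt{n}}$ clause) and fixes the precise form of $\mathcal{C}_1,\mathcal{C}_2$ in \eqref{def:c-constant}; everything after this is essentially standard envelope-descent bookkeeping under the usual smallness $\alpha\leq 1/(2L)$.
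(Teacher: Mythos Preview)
Your proposal is correct and follows essentially the same route as the paper: the absorption inequality $\sum_\ell\|\bz_\ell-\bar{\by}_\ell\|^2 \leq \mathcal{C}_1\alpha^2L^2\sum_\ell\|\bx_{\ell+1}-\bx_\ell\|^2 + \mathcal{C}_2$ is exactly the paper's auxiliary lemma, obtained from the same two one-step recursions (tracking residual and $\bz-\bar{\by}$ via the $2$-Lipschitz projection and $\bx_{k+1}=\bz_k+\alpha(\nabla f(\bx_k)-\nabla f(\bx_{k+1}))$), followed by the envelope lower bound $\varphi_\alpha^{\rm DR}\geq f^*$. The only cosmetic differences are that for \eqref{eq:station-grad} the paper routes through $\grad f(\bar{\by}_k)$ using the projection optimality $\hat{\by}_k-\bar{\by}_k\in N_{\bar{y}_k}\Mcal$ (not \eqref{eq:normal-bound}) and then the Lipschitz bound \eqref{g-riemannian-lip}, and for \eqref{eq:station-consen} it uses the single line $\|\bx_k-\bar{\bx}_k\|\leq\|\bx_k-\bar{\by}_k\|\leq\|\bar{\by}_k-\bz_k\|+2\|\bx_{k+1}-\bx_k\|$ rather than your three-term split.
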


%\onecolumn

\section{The extensions of inexact DRS}

Note that in \eqref{eq:drs-decen}, the computation of the exact proximal operator, denoted as $\prox_{\alpha f} $, is required. This computation is time-consuming in most cases. Therefore, in this section, we investigate the convergence of the algorithm when $\prox_{\alpha f} $ is computed approximately with a tolerance $\epsilon_k$. In particular, one can find $x_{i,k+1}$ satisfying
$$
x_{i,k+1} - s_{i,k+1} + \alpha \nabla f_i(x_{i,k+1}) = \mu_{i,k+1},\;\;  i\in [n],
$$
where $\|\mu_{i,k+1}\|^2 \leq  \epsilon_{k+1}, i\in [n]$.
This implies that $ x_{i,k+1}  = \prox_{\alpha f_i}(s_{i,k} +  \mu_{i,k})$. Here, we shall consider the following assumption
\begin{assumption}\label{assum:epsilon}
    $\{\epsilon_k\}_{k\in \mathbb{N}}$ is summable, i.e., $\sum_{k}\epsilon_k \leq \mathcal{D} <\infty$ for some constant $\mathcal{D}$. Moreover, $\epsilon_0 < \delta_2$, where $\delta_2$ is defined by \eqref{def:delta}.
\end{assumption}
The detailed iterative process is given in Algorithm \ref{alg:drgta2}.

\begin{algorithm}[htbp]
\caption{Inexact Decentralized DRS method for solving \eqref{prob:original} (\textbf{iDDRS})} \label{alg:drgta2}
\begin{algorithmic}[1]
\REQUIRE  Initial point $\bs_0, \bz_0 \in \mathcal{M}^n$, an integer $t$, the step size $\alpha$, the sequence $\{\epsilon_k\}$ $\epsilon_0 < \delta_2$. 
\STATE Let $x_{i,0} = s_{i,0}$, $d_{i,0} = x_{i,0} - s_{i,0}$ and $y_{i,0} = 2x_{i,0} - s_{i,0}$ on each node $i\in [n]$.
\FOR {$k=0,\cdots$ (for each node $i\in [n]$, in parallel)}
\STATE Update $s_{i,k+1}  = s_{i,k} + z_{i,k} - x_{i,k}$. 
\STATE Update  $x_{i,k+1}$:
$$x_{i,k+1}  = \prox_{\alpha f_i}(s_{i,k+1} + \mu_{i,k+1}),\; \|\mu_{i,k+1}\|^2 \leq \epsilon_k. $$ 
\STATE Update $y_{i,k+1}  = 2 x_{i,k+1} - s_{i,k+1}$.
\STATE Take gradient tracking:
$$
d_{i,k+1} = \sum_{j=1}^n W_{ij}^t d_{j,k} + x_{i,k+1} - s_{i,k+1} - (x_{i,k} - s_{i,k}).
$$
\STATE Update $z_{i,k+1} = \Pcal_{\Mcal}\left(\sum_{j=1}^n W_{ij}^t x_{j,k+1} + d_{i,k+1} \right)$. 
\ENDFOR
\end{algorithmic}
\end{algorithm}

Similar to Lemma \ref{lem:stay-x}, we have the following result. We provide the proof in Appendix \ref{appen-2}.
\begin{lemma} \label{lem:stay-x-inex}
    Suppose that Assumption \ref{assum-w}-\ref{assum:epsilon} hold.    Let $\{\bs_k,\bx_k,\by_k,\bz_k\}$ be generated by Algorithm \ref{alg:drgta2} with
    $$
  \begin{aligned}
  0 &< \alpha \leq \min\{\frac{1}{2L}, \frac{\delta_2 - \epsilon_0}{3\|\nabla f(0)\| + 2L\left(\zeta + \delta_2\right) + 2\epsilon_0} \}, \\
t & \geq \left \lceil \max\left\{  \log_{\sigma_2}(\frac{1}{4\sqrt{n}}) , \log_{\sigma_2}(\frac{\delta_3}{\delta_2 \sqrt{n}})  \right\} \right \rceil.
  \end{aligned}
  $$
  If  $\bx_0 \in \mathcal{N}_1$ and $\bz_0 \in \mathcal{N}_2$, then it holds that for any integer $k>0$,
    \be \label{bound-dk-xk-zk-inex} \bx_k \in \mathcal{N}_1,\;  \quad {\rm and} \quad \bz_k \in \mathcal{N}_2, \ee
    where $\delta_1,\delta_2,\delta_3$ are defined in \eqref{def:delta} and $\mathcal{N}_1,\mathcal{N}_2$ are defined in \eqref{def:neiborhood}. Moreover, we have that for any integer $k>0$,
    \begin{align}
    \sum_{j=1}^n {W_{ij}^t} x_{j,k} +d_{i,k} & \in \bar{U}_{\Mcal} (\gamma), ~ i\in [n]. \label{eq:neibohood2-index}
\end{align}
\end{lemma}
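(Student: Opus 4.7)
The plan is to mirror the induction argument already used for Lemma \ref{lem:stay-x}, carrying the additional perturbation term $\mu_{i,k+1}$ through every estimate and exploiting Assumption \ref{assum:epsilon} (summability of $\{\epsilon_k\}$ together with $\epsilon_0 < \delta_2$) to absorb the resulting errors into the margin built into $\mathcal{N}_1$ and $\mathcal{N}_2$. The first-order optimality condition of the inexact prox step gives
\[
x_{i,k+1} = s_{i,k+1} + \mu_{i,k+1} - \alpha\,\nabla f_i(x_{i,k+1}),
\]
so every identity in the exact proof of the form $x_{i,k+1} - s_{i,k+1} = -\alpha\,\nabla f_i(x_{i,k+1})$ now picks up an extra $\mu_{i,k+1}$ summand whose norm is at most $\sqrt{\epsilon_k}\leq \sqrt{\epsilon_0}$.

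First, I would induct on $k$ and maintain, in parallel with the exact case, the companion bounds $\|\bd_k\|_{F,\infty}\le 4\delta_2$, $\|\bs_k\|_{F,\infty}\le \zeta+\delta_2$, $\bx_k\in\mathcal{N}_1$ and $\bz_k\in\mathcal{N}_2$. Using $L$-smoothness of $f_i$ and the inductive bound on $\|\bs_k\|_{F,\infty}$, I can bound $\|\nabla f_i(x_{i,k+1})\|\leq \|\nabla f(0)\| + L(\zeta+\delta_2+\|\mu_{i,k+1}\|)$; plugging this into the perturbed prox identity and using the modified step-size ceiling $\alpha\le(\delta_2-\epsilon_0)/(3\|\nabla f(0)\|+2L(\zeta+\delta_2)+2\epsilon_0)$ produces the same final bounds on $\|\bs_{k+1}\|_{F,\infty}$ and $\|\bx_{k+1}\|_{F,\infty}$ as in the exact proof. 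The gradient-tracking recursion is unchanged in structure, so the bound $\|\bd_{k+1}\|_{F,\infty}\leq 4\delta_2$ propagates exactly as before, now with a harmless $O(\sqrt{\epsilon_k})$ perturbation that is dominated by the existing slack.

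Next, the heart of the argument is the tube condition $\sum_j W_{ij}^t x_{j,k+1} + d_{i,k+1}\in \bar{U}_\Mcal(\gamma)$, which is what legitimises the application of \eqref{eq:lip-proj-alpha}. Decomposing this into $\hat{x}_{k+1}+(\sum_j W_{ij}^t x_{j,k+1}-\hat{x}_{k+1})+d_{i,k+1}$, the second term is controlled by $\sigma_2^t\,\|\bx_{k+1}-\hat{\bx}_{k+1}\|$ (Assumption \ref{assum-w}), and the third by the inductive bound on $\bd$; the lower bounds on $t$ are chosen exactly so that both contributions stay inside $\gamma-\delta_1$. Applying the $2$-Lipschitz property \eqref{eq:lip-proj-alpha} then transfers $\bx_{k+1}\in\mathcal{N}_1$ into $\bz_{k+1}\in\mathcal{N}_2$ by the same route as in Lemma \ref{lem:stay-x}.

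The main obstacle I anticipate is the quantitative calibration of $\alpha$: the numerator $\delta_2-\epsilon_0$ and the extra additive $2\epsilon_0$ in the denominator are precisely what is needed so that the worst-case first perturbation $\|\mu_{i,1}\|$ (which is the largest, since later $\epsilon_k$ are smaller by summability) does not push $\bs_1$ or $\bx_1$ out of its allotted neighborhood. Once the first step is absorbed, summability of $\{\epsilon_k\}$ keeps the cumulative contributions finite, so the induction closes with the same constants as the exact case. The inclusion \eqref{eq:neibohood2-index} then follows as an immediate byproduct of the tube argument.
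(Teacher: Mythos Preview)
Your plan is essentially the same as the paper's: mirror the exact-case induction (Lemma~\ref{lem:stay-x}) while carrying the extra $\mu_{i,k+1}$ through the prox identity, using Lemma~\ref{lem:bound-xs-inex} for the perturbed bound $\|x_{i,k}-s_{i,k}\|\le\alpha(3\|\nabla f(0)\|+2L(\zeta+\delta_2)+2\epsilon_k)+\epsilon_k$, and then repeating the tube argument verbatim to obtain \eqref{eq:neibohood2-index}.

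There is, however, a slip in your final paragraph. Summability of $\{\epsilon_k\}$ does \emph{not} imply that $\epsilon_k$ is decreasing or that $\epsilon_k\le\epsilon_0$, and more importantly the errors do not accumulate at all in this lemma: each inductive step is controlled by its own $\epsilon_{k+1}$ alone, not by a running sum. What actually closes the induction is a uniform bound $\epsilon_k\le\epsilon_0$ for every $k$ (which the paper uses without stating it explicitly in Assumption~\ref{assum:epsilon}), so that the step-size ceiling $\alpha\le(\delta_2-\epsilon_0)/(3\|\nabla f(0)\|+2L(\zeta+\delta_2)+2\epsilon_0)$ dominates the perturbation at \emph{every} step, not just the first. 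Summability of $\{\epsilon_k\}$ plays no role in Lemma~\ref{lem:stay-x-inex}; it is needed only later in Theorem~\ref{theo:iter:inexact}, where the descent estimate \eqref{eq:descent-index} genuinely accumulates the $\epsilon_\ell$'s. Replace the appeal to summability with a uniform bound $\epsilon_k\le\epsilon_0$ and your argument goes through exactly as the paper's does.
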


Denote $\mathbf{\mu}_k = (\mu_{1,k}^\top,\cdots,\mu_{n,k}^\top)^\top$. The following lemma is similar to Lemma \ref{lem:descent}, we omit the proof.
\begin{lemma} \label{lem:descent-inex}
    Suppose that Assumption \ref{assum:f} holds. Let $\{\bs_k,\bx_k,\by_k,\bz_k\}$ be generated by Algorithm \ref{alg:drgta2}. Then, 
    \be \label{eq:descent-index}  \begin{aligned}
        &\varphi_{\alpha}^{\rm DR}(\bs_0) -\varphi_{\alpha}^{\rm DR}(\bs_{k+1}) 
        \geq   \sum_{\ell=0}^k \frac{1 - \alpha L - 2\alpha^2 L^2}{2\alpha} \|\bx_{\ell +1} - \bx_{\ell}\|^2 \\
        &- \frac{1 + \alpha L}{2 \alpha} \sum_{\ell=0}^k \left( \alpha \| \bx_{\ell +1} - \bx_{\ell} \|^2 + \frac{2}{\alpha} (\|\bz_l - \bar{\by}_l\|^2 + 2\sqrt{n}\epsilon_{\ell}) \right). 
    \end{aligned} \ee
\end{lemma}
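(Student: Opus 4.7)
The plan is to mirror the proof of Lemma~\ref{lem:descent}, carefully tracking the places where the inexactness enters. The crucial identity behind the exact descent estimate is the optimality condition of the proximal subproblem, namely $\bs_{\ell+1} - \bx_{\ell+1} = \alpha\nabla f(\bx_{\ell+1})$, which yields the clean form $\by_{\ell+1} = \bx_{\ell+1} - \alpha\nabla f(\bx_{\ell+1})$. In the inexact setting of Algorithm~\ref{alg:drgta2}, Step~4 instead gives
\[
\bs_{\ell+1} - \bx_{\ell+1} = \alpha\nabla f(\bx_{\ell+1}) - \boldsymbol{\mu}_{\ell+1},\qquad \|\boldsymbol{\mu}_{\ell+1}\|^2 \leq n\epsilon_{\ell+1},
\]
so that $\by_{\ell+1} = \bx_{\ell+1} - \alpha\nabla f(\bx_{\ell+1}) + \boldsymbol{\mu}_{\ell+1}$. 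Every step of the exact proof in which the identity $\by = \bx - \alpha\nabla f(\bx)$ is invoked must now carry an additive correction involving $\boldsymbol{\mu}_{\ell+1}$.

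First, I would rederive the one-step envelope decrease in the exact regime, expressing $\varphi_\alpha^{\rm DR}(\bs_{\ell+1}) - \varphi_\alpha^{\rm DR}(\bs_\ell)$ through the definition \eqref{eq:DR-env}. Substituting $\bs_{\ell+1} - \bs_\ell = \bz_\ell - \bx_\ell$, invoking the $L$-smoothness \eqref{g-riemannian-lip} and the Lipschitz bound on $\prox_{\alpha f}$, and bounding $\|\bx_{\ell+1} - \bx_\ell\|$ in terms of $\|\bz_\ell - \bar{\by}_\ell\|$ and the gradient increments, produces exactly the two groups of terms on the right-hand side of \eqref{eq:decent}. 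This step uses the fact that $\bar{\by}_\ell = \Pcal_\mathcal{C}(\by_\ell)$ is well-defined, which is guaranteed by Lemma~\ref{lem:stay-x-inex}.

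Next, redo the same calculation in the inexact setting: replace every occurrence of $\bx - \alpha\nabla f(\bx)$ by $\by = \bx - \alpha\nabla f(\bx) + \boldsymbol{\mu}$ and carry the cross terms explicitly. The emerging inner products $\langle \cdot, \boldsymbol{\mu}_{\ell+1}\rangle$ and quadratic remainders are then controlled by Young's inequality $2\langle a, b\rangle \leq \|a\|^2 + \|b\|^2$, and by the bound $\|\boldsymbol{\mu}_{\ell+1}\|^2 \leq n\epsilon_{\ell+1}$ together with the elementary estimate $\|\boldsymbol{\mu}_{\ell+1}\| \leq \sqrt{n\epsilon_{\ell+1}}$. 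Choosing the Young weights so that a copy of $\|\bz_\ell - \bar{\by}_\ell\|^2$ is reserved for absorbing the perturbation yields the doubled coefficient $\tfrac{2}{\alpha}$ in front of $\|\bz_\ell - \bar{\by}_\ell\|^2$ (in place of $\tfrac{1}{\alpha}$ in the exact case) and leaves the additive remainder $\tfrac{2}{\alpha}\cdot 2\sqrt{n}\epsilon_\ell$ on the right-hand side, matching the statement. Telescoping from $\ell = 0$ to $k$ then gives \eqref{eq:descent-index}.

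The main obstacle is purely bookkeeping: one must pick a \emph{single} Young's weight (roughly $\eta \approx 1$ in a $\tfrac{1}{\alpha}$-scaled inequality) that simultaneously (i) merges the new quadratic error into the existing $\|\bz_\ell - \bar{\by}_\ell\|^2$ term without breaking the sign of the leading coefficient $1 - \alpha L - 2\alpha^2 L^2$, and (ii) leaves the residual in the linear form $\sqrt{n}\epsilon_\ell$ rather than $\sqrt{n\epsilon_\ell}$, so that summability of $\{\epsilon_k\}$ from Assumption~\ref{assum:epsilon} can later be exploited. Once such weights are fixed, the remainder of the argument is a routine reproduction of the proof of Lemma~\ref{lem:descent}.
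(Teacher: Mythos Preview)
Your approach is precisely the one the paper intends: it explicitly states the lemma ``is similar to Lemma~\ref{lem:descent}, we omit the proof,'' so mirroring that argument while carrying the perturbation $\boldsymbol{\mu}_{\ell+1}$ through the identity $\bs_{\ell+1}-\bx_{\ell+1}=\alpha\nabla f(\bx_{\ell+1})-\boldsymbol{\mu}_{\ell+1}$ and then absorbing the resulting cross terms via Young's inequality is exactly what is required.

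One caveat worth flagging: the issue you raise in your final paragraph is real, not merely bookkeeping. From $\|\mu_{i,\ell}\|^2\le\epsilon_\ell$ one only gets $\|\boldsymbol{\mu}_\ell\|\le\sqrt{n\epsilon_\ell}$, and there is no weight in Young's inequality that converts this into a linear $\sqrt{n}\,\epsilon_\ell$ without an extra assumption (e.g.\ $\epsilon_\ell\le 1$). The paper uses the same $2\sqrt{n}\epsilon_k$ scaling elsewhere (see the bound on $\|\bs_{k+1}-\bs_k\|$ in the proof of Theorem~\ref{theo:iter:inexact}), so this is a minor inconsistency in the paper itself rather than a flaw in your plan; your derivation will naturally produce the $\sqrt{n\epsilon_\ell}$ form, and since $\sum_k\sqrt{\epsilon_k}<\infty$ is a slightly stronger but standard hypothesis, the downstream use in Theorem~\ref{theo:iter:inexact} goes through either way.
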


The following theorem show that Algorithm \ref{alg:drgta2} achieves the convergence rate of $\mathcal{O}(1/k)$. We provide the proof in Appendix \ref{appen-2}.

\begin{theorem}\label{theo:iter:inexact}
 Suppose that Assumption \ref{assum-w}-\ref{assum:epsilon} hold. Let $\{\bs_k,\bx_k,\by_k,\bz_k\}$ be generated by Algorithm \ref{alg:drgta2} with 
    $$
    \begin{aligned}
    \alpha & \leq \min\{\frac{1}{2(1+2L+2\mathcal{C}_3L^2)}, \frac{\delta_2 - \epsilon_0}{3\|\nabla f(0)\| + 2L\left(\zeta + \delta_2\right) + 2\epsilon_0} \},\\
    t & \geq \left \lceil \max\{ 2\log_{\sigma_2}(\frac{1}{n}), \log_{\sigma_2}(\frac{\delta_3}{\delta_2 \sqrt{n}}), \log_{\sigma_2} \frac{1}{12\sqrt{n}}\} \right \rceil.
    \end{aligned}
    $$  Let $f^*$ be the optimal value of \eqref{prob:comp}. If $\|\mathbf{d}_0\| \leq 4\delta_2$, $\|\bs_0\|_{F,\infty} \leq \zeta + \delta_2$, $\bx_0 \in \mathcal{N}_1$ and $\bz_0 \in \mathcal{N}_2$, it holds that for any $k\in \mathbb{N}$
       \be \label{eq:station-consen-index}
    \begin{aligned}
      &  \min_{ 0 \leq \ell \leq k} \|\bx_\ell - \bar{\bx}_{\ell}\|  \\
      \leq & 
     \frac{8\alpha(\mathcal{C}_1\alpha^2L^2+4)}{k+1}   \left(\varphi_{\alpha}^{\rm DR}(\bx_0, \bar{\by}_0) - f^* + \frac{\mathcal{C}_3}{\alpha^2} \right) + \frac{2\mathcal{C}_5}{k+1},
       \end{aligned}
         \ee
\be\label{eq:station-grad-index}
\begin{aligned}
       & \min_{ 0 \leq \ell \leq k} \|\grad f(\bar{\bx}_\ell) \| \\
       & \leq \frac{72(\mathcal{C}_3 \alpha^2 L^2 + 4)}{(k+1) \alpha} \left(\varphi_{\alpha}^{\rm DR}(\bx_0, \bar{\by}_0) - f^* + \frac{\mathcal{C}_5}{\alpha^2} \right)+ \frac{18 \mathcal{C}_5}{(k+1)\alpha^2},
    \end{aligned}
    \ee
        where 
   \be\label{def:constant-index}
\begin{aligned}
       \mathcal{C}_3 & :=  \frac{128}{(1-4\sigma_2^t)^2} (\sigma_2^{2t} \alpha^2 L^2 +  \frac{\alpha^2 L^2}{(1-\sigma_2^{t})^2})\mathcal{D},  \\
     \mathcal{C}_4 & :=  \frac{512n + 128n(1-\sigma_2^{t})^2}{(1-4\sigma_2^t)^2(1-\sigma_2^{t})^2} \sum_{\ell = 0}^k \epsilon_{\ell} \\
     & + \frac{128}{(1-4\sigma_2^t)^2(1-\sigma_2^{2t})}\| \bd_{0} - (\hat{\bx}_{0} - \hat{\bs}_0) \|^2 \\
    &+ \frac{4}{1-16\sigma_2^{2t}} \| \bz_{0} - \bar{\by}_{0} \|^2,\\
    \mathcal{C}_5& : = \mathcal{C}_4 + 2\sqrt{n}\mathcal{D}.
\end{aligned}
   \ee
   
\end{theorem}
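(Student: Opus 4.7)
The plan is to follow the same roadmap as the proof of Theorem \ref{them-main}, but track carefully how the perturbation $\mu_{i,k+1}$ (with $\|\mu_{i,k+1}\|^2\leq\epsilon_k$) propagates through the gradient tracking recursion and the projection onto $\Mcal$. First, Lemma \ref{lem:stay-x-inex} together with the stepsize and communication conditions guarantees that all iterates remain in $\mathcal{N}_1,\mathcal{N}_2$ and that $\sum_j W_{ij}^t x_{j,k}+d_{i,k}\in\bar U_{\Mcal}(\gamma)$, so we may apply the $2$-Lipschitz property \eqref{eq:lip-proj-alpha} and the normal inequality \eqref{eq:normal-bound} of $\Pcal_{\Mcal}$ throughout. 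This is crucial because it enables convex-like control over the nonconvex manifold constraint.

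Next, I telescope the descent inequality of Lemma \ref{lem:descent-inex}. Summing \eqref{eq:descent-index} from $\ell=0$ to $k$ and using $\varphi_\alpha^{\mathrm{DR}}(\bs_{k+1})\geq f^*$ gives
\begin{equation*}
\frac{1-2\alpha L-2\alpha^2 L^2}{2\alpha}\sum_{\ell=0}^k\|\bx_{\ell+1}-\bx_\ell\|^2\leq \varphi_\alpha^{\mathrm{DR}}(\bs_0)-f^*+\frac{1+\alpha L}{\alpha^2}\sum_{\ell=0}^k\bigl(\|\bz_\ell-\bar{\by}_\ell\|^2+2\sqrt{n}\epsilon_\ell\bigr).
\end{equation*}
Under the stated stepsize this coefficient is positive, so the task reduces to bounding the sum $\sum_\ell\|\bz_\ell-\bar{\by}_\ell\|^2$ from above, and then translating the aggregate control of $\|\bx_{\ell+1}-\bx_\ell\|^2$ into control of $\|\bx_\ell-\bar{\bx}_\ell\|^2$ and $\|\grad f(\bar{\bx}_\ell)\|^2$ as in the proof of Theorem \ref{them-main}.

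The heart of the argument is the perturbed analogue of the recursion on the consensus error for $\bz_\ell$ and on the tracking error $\|\bd_\ell-(\hat{\bx}_\ell-\hat{\bs}_\ell)\|^2$. Using the $z$-update, the $2$-Lipschitz property of $\Pcal_{\Mcal}$, and the contraction $\|(W^t\otimes I - J\otimes I)\bv\|\leq\sigma_2^t\|\bv-J\bv\|$, I obtain a two-variable linear recursion of the form
\begin{equation*}
\|\bz_{\ell}-\bar{\by}_{\ell}\|^2\leq 16\sigma_2^{2t}\|\bz_{\ell-1}-\bar{\by}_{\ell-1}\|^2+\text{const}\cdot\bigl(\|\bd_\ell-(\hat{\bx}_\ell-\hat{\bs}_\ell)\|^2+\alpha^2L^2\|\bx_\ell-\bx_{\ell-1}\|^2\bigr),
\end{equation*}
coupled with a recursion for $\|\bd_\ell-(\hat{\bx}_\ell-\hat{\bs}_\ell)\|^2$ driven by $\|\bx_\ell-\bx_{\ell-1}\|^2$. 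The new ingredient compared with the exact case is that the proximal relation $x_{i,\ell+1}-s_{i,\ell+1}+\alpha\nabla f_i(x_{i,\ell+1})=\mu_{i,\ell+1}$ injects $\mu_\ell$ into both recursions; using $\|\bmu_\ell\|^2\leq n\epsilon_{\ell-1}$ and summing the geometric series in $\sigma_2^t$ (with $t$ large enough that $16\sigma_2^{2t}<1$ and $4\sigma_2^t<1$) produces an additive term proportional to $\sum_\ell\epsilon_\ell$. Assumption \ref{assum:epsilon} ensures this sum is bounded by $\mathcal{D}$, which directly yields the constants $\mathcal{C}_3,\mathcal{C}_4$ in \eqref{def:constant-index}, and the extra $2\sqrt{n}\mathcal{D}$ from the descent inequality gives $\mathcal{C}_5=\mathcal{C}_4+2\sqrt{n}\mathcal{D}$.

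Putting everything together, I substitute these aggregate bounds back into the descent telescoping to obtain $\sum_{\ell=0}^k\|\bx_{\ell+1}-\bx_\ell\|^2=\mathcal{O}(\varphi_\alpha^{\mathrm{DR}}(\bs_0)-f^*+\mathcal{C}_5/\alpha^2)$; then, mimicking the final step of Theorem \ref{them-main}, I bound $\|\bx_\ell-\bar{\bx}_\ell\|^2$ and $\|\grad f(\bar{\bx}_\ell)\|^2$ by $\|\bx_{\ell+1}-\bx_\ell\|^2$ plus consensus/tracking errors, and take the minimum over $\ell\leq k$ to obtain \eqref{eq:station-consen-index} and \eqref{eq:station-grad-index}. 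The main obstacle is the careful propagation of the $\mu_\ell$ perturbations through the coupled $(\bz,\bd)$ recursions without losing the $\sigma_2^t$-based geometric decay; this is precisely where the summability hypothesis on $\{\epsilon_k\}$ is used, and it is what distinguishes the inexact analysis from the exact one while preserving the $\mathcal{O}(1/k)$ rate.
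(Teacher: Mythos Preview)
Your proposal is correct and follows essentially the same roadmap as the paper: combine the descent inequality of Lemma \ref{lem:descent-inex} with a perturbed coupled recursion for $\|\bz_\ell-\bar{\by}_\ell\|$ and $\|\bd_\ell-(\hat{\bx}_\ell-\hat{\bs}_\ell)\|$ (the inexact analogues of Lemmas \ref{lem:z-y-diff} and \ref{lem:z-y-dis}), sum geometrically using $\sigma_2^t$-contraction and summability of $\{\epsilon_k\}$, and then convert the aggregate bound on $\sum_\ell\|\bx_{\ell+1}-\bx_\ell\|^2$ into stationarity and consensus bounds exactly as in Theorem \ref{them-main}. One small remark: the normal inequality \eqref{eq:normal-bound} is not actually needed here---only the $2$-Lipschitz property \eqref{eq:lip-proj-alpha} of $\Pcal_{\Mcal}$ on $\bar U_{\Mcal}(\gamma)$ is used.
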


\section{Numerical experiments}
In this section, we present numerical comparisons of our proposed methods with the existing decentralized manifold optimization algorithms, DRGTA \cite{chen2021decentralized} and DPGTA \cite{hu2023decentralized}, on the decentralized principal component analysis (DPCA).

The DPCA problem can be mathematically formulated as
\be \label{prob:dpca} \min_{\bx \in \Mcal^n} \;\;  -\frac{1}{2} \sum_{i=1}^n \tr(x_i^\top A_i^\top A_i x_i), \;\; \st \;\; x_1 = \cdots = x_n, \ee
where $\Mcal^n:=\underbrace{{\rm St}(d,r) \times \cdots \times {\rm St}(d,r)}_{n}$, $A_{i} \in \mathbb{R}^{m_{i} \times d}$ is the local data matrix in $i$-th agent with $m_{i}$ samples.
Note that for any solution $x^*$ of \eqref{prob:dpca}, $x^*Q$ with an orthogonal matrix $Q \in \R^{r\times r}$ is also a solution. We use the function
\[ d_s(x, x^*) := \min_{Q\in \R^{r\times r},\; Q^\top Q = QQ^\top = I_r} \; \|xQ - x^*\| \]
to compute the distance between two points $x$ and $x^*$. As $x$ is constrained on the Stiefel manifold, it always holds that $\tr(x^\top x) = r$. Then, we can define $f_i = -\frac{1}{2} \tr(x^\top (A_i^\top A_i -\|A_i\|_2^2 I)x)$ for \eqref{prob:dpca}. Consequently, the proximal operator $\prox_{\alpha f_i}$ can be exactly calculated via solving linear equations, i.e., $\prox_{\alpha f_i}(x) = (I + t(\|A_i\|_2^2 I - A_i^\top A_i))^{-1} x$. Moreover, we can save $(I + t(\|A_i\|_2^2 I - A_i^\top A_i))^{-1}$ to significantly reduce the computational costs. Given these, we only present the numerical results of DDRS.

\begin{figure}[!htb]
    \centering
    \subfloat{
        \includegraphics[width=0.23\textwidth]{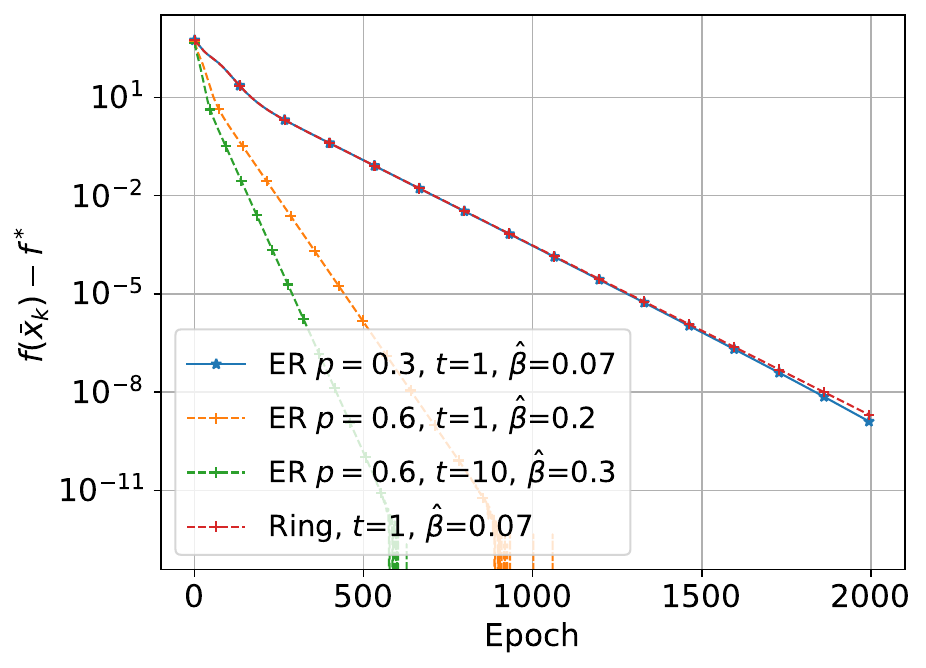}
    }
    \subfloat{
        \includegraphics[width=0.23\textwidth]{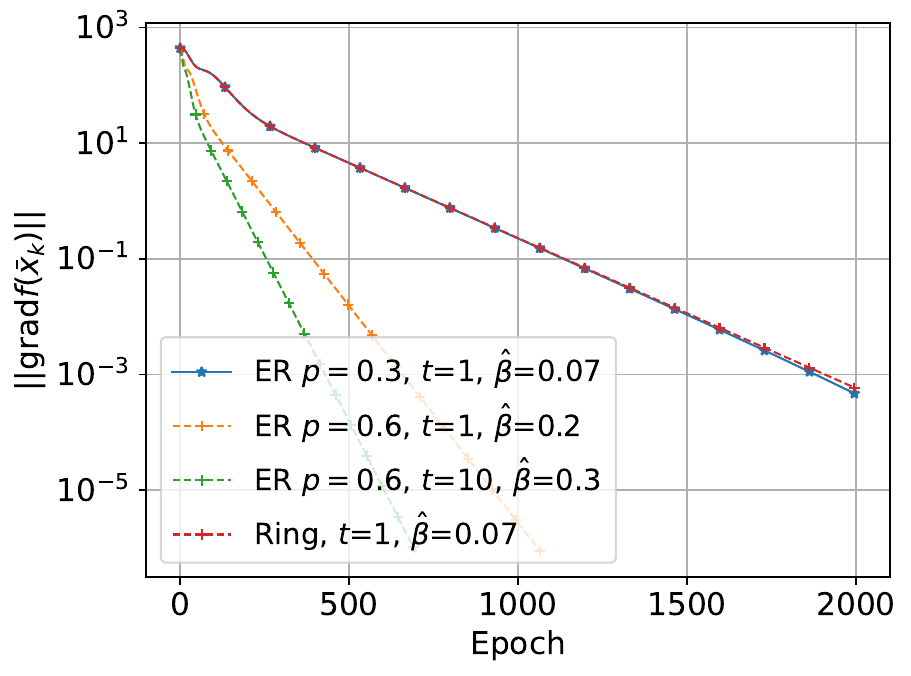}
    }\\
    \subfloat{
        \includegraphics[width=0.23\textwidth]{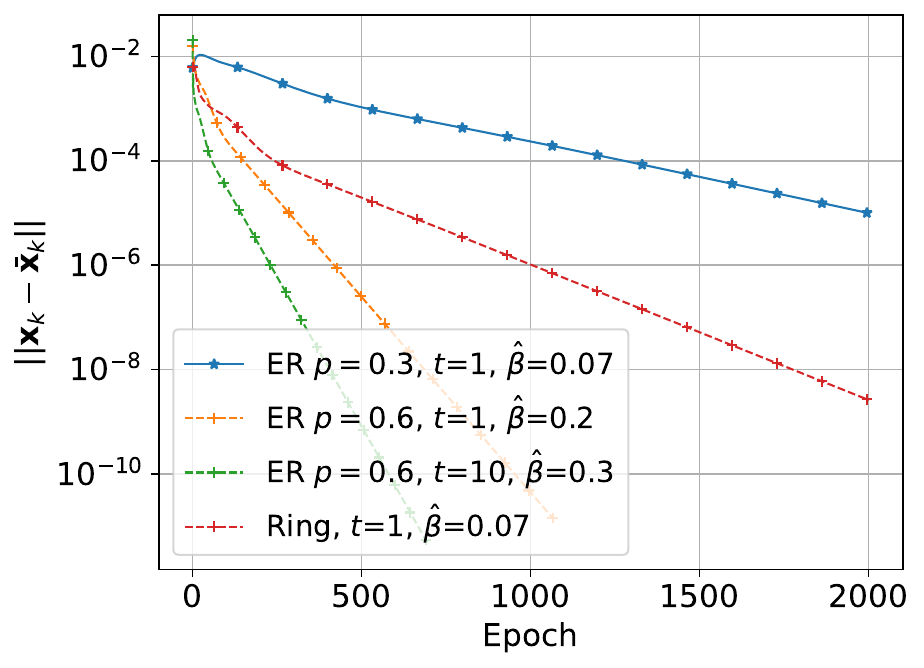}
    }
    \subfloat{
        \includegraphics[width=0.23\textwidth]{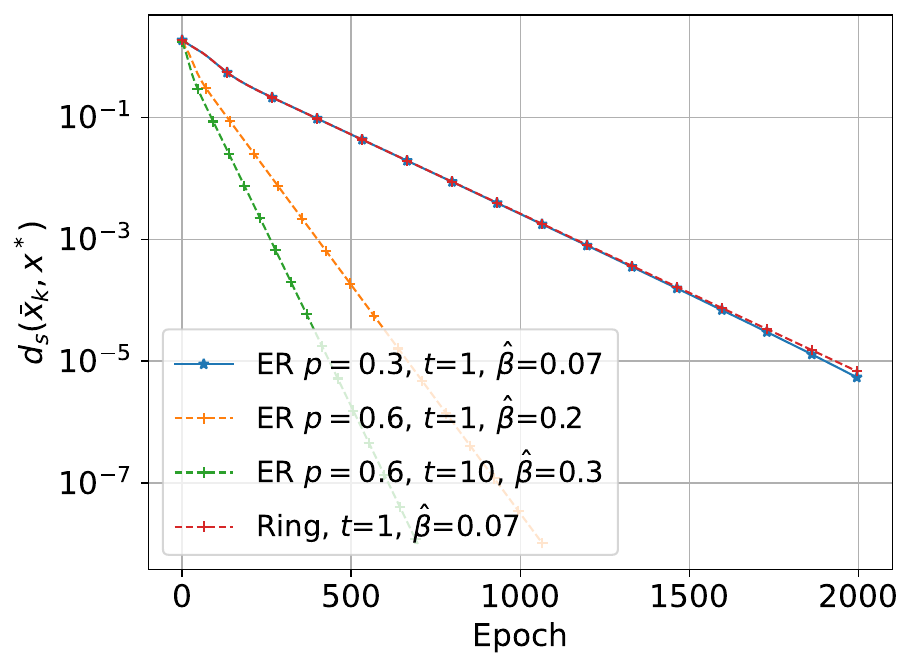}
    }
    \caption{Numerical results of DDRS for solving DPCA on the synthetic dataset with different network graphs and different $t$.}
    \label{fig:diff-DPCA}
\end{figure}

\begin{figure}[!htb]
    \centering
    \subfloat{
        \includegraphics[width=0.23\textwidth]{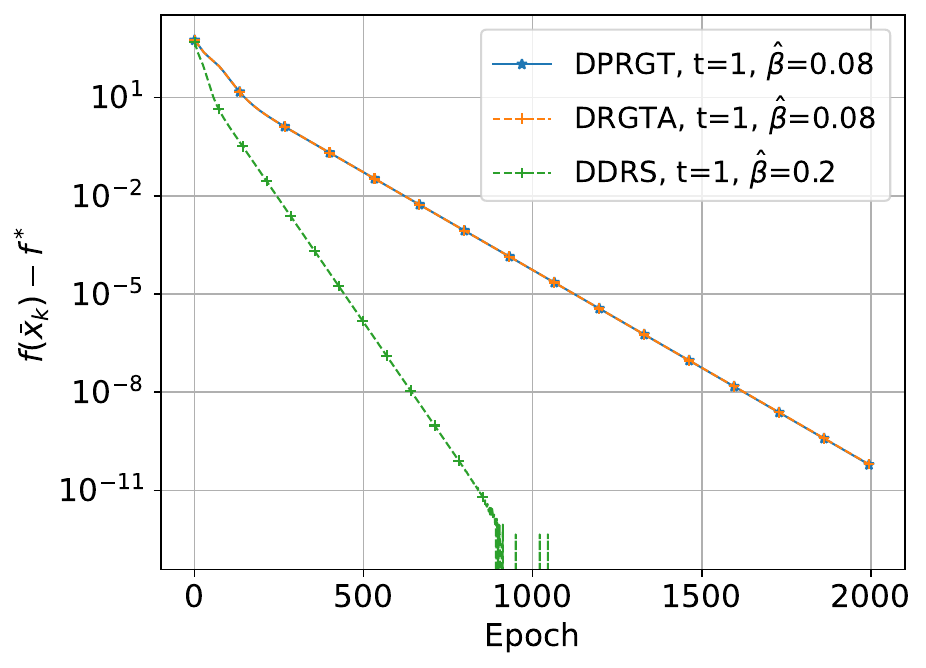}
    }
    \subfloat{
        \includegraphics[width=0.23\textwidth]{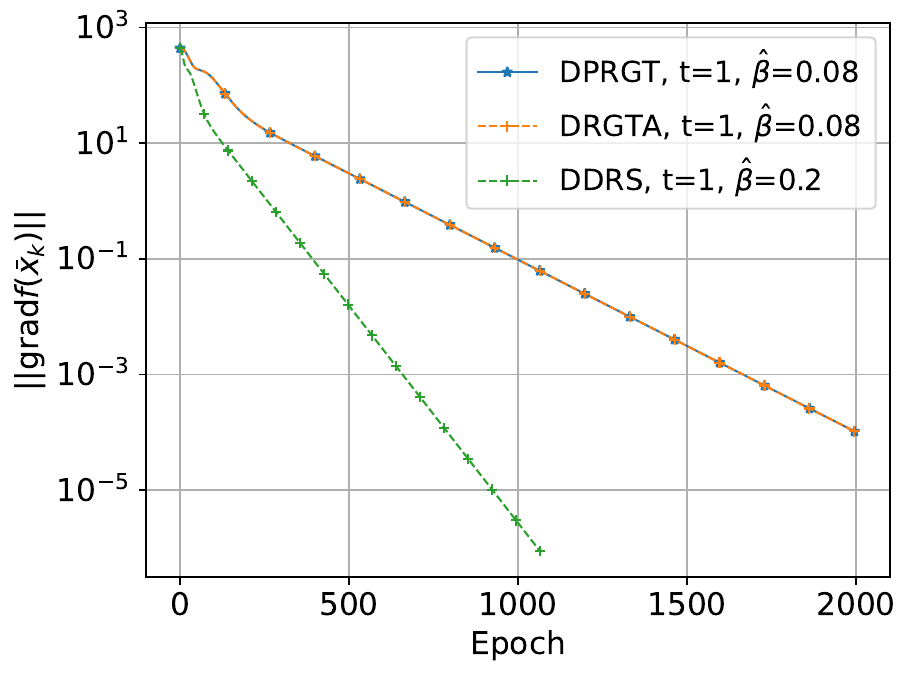}
    }\\
    \subfloat{
        \includegraphics[width=0.23\textwidth]{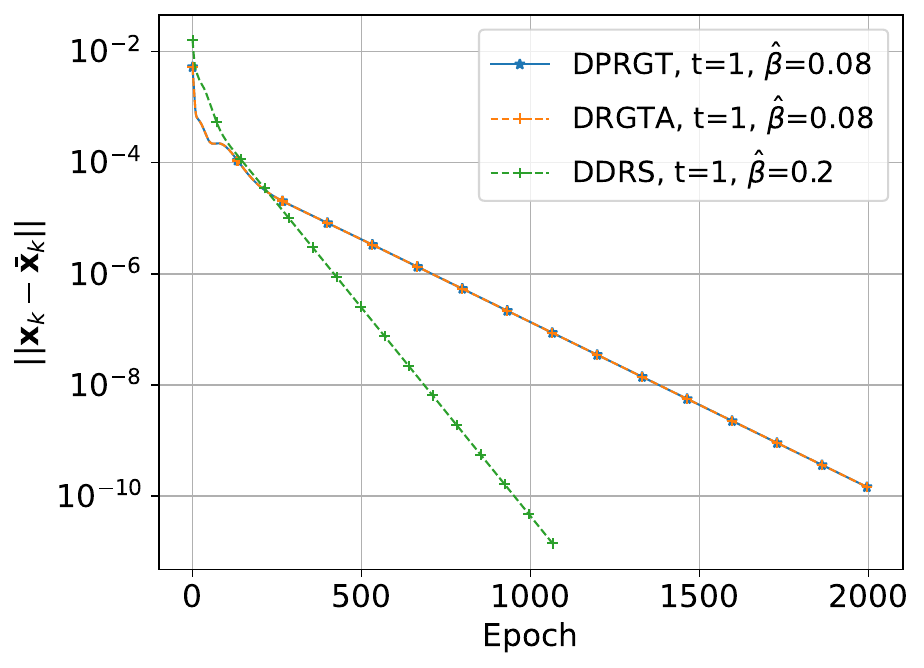}
    }
    \subfloat{
        \includegraphics[width=0.23\textwidth]{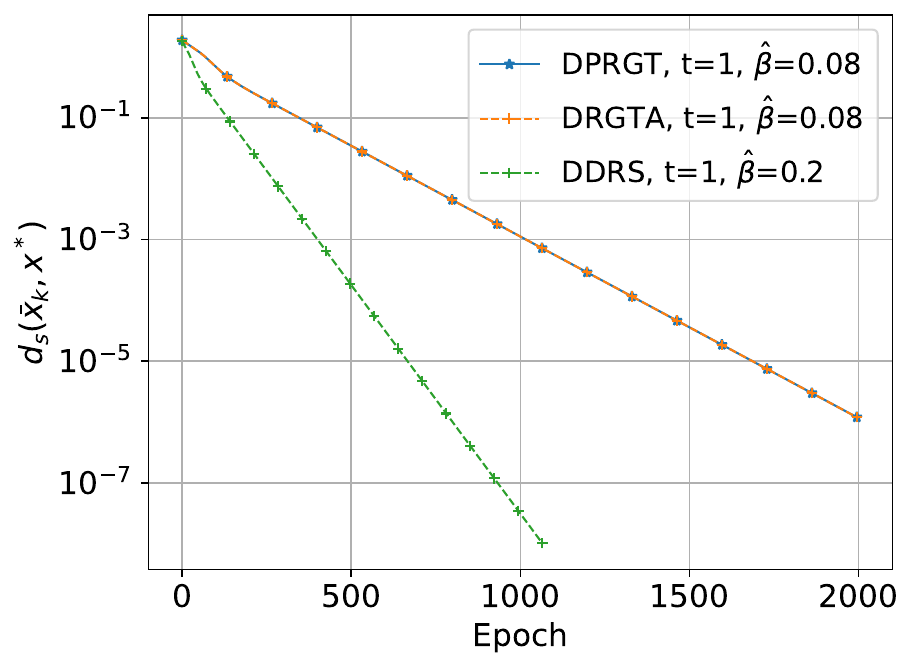}
    }
    \caption{Numerical results of different algorithms for solving DPCA on the synthetic dataset.}
    \label{fig:diff-algo}
\end{figure}

\subsection{Synthetic dataset}
We set $m_{1}=\cdots=m_{n}=1000, d=10$, and $r=5$. Then, a matrix $B \in \R^{1000n  \times d}$ is generated from the singular value decomposition
\[ B = U \Sigma V^\top, \]
where $U \in \R^{1000n \times d}$ and $V \in \R^{d\times d}$ are orthogonal matrices, and $\Sigma \in \R^{d \times d}$ is a diagonal matrix. To control the distributions of the singular values, we set $\tilde{\Sigma} = {\rm diag} (\xi^j)$ with $\xi \in (0,1)$. Furthermore, $A$ is set as
\[ A = U \tilde{\Sigma} V^\top \in \R^{1000n \times d}. \]
$A_i$ is obtained by randomly splitting the rows of $A$ into $n$ subsets with equal cardinalities. It is easy to check the first $r$ columns of $V$ form the solution of \eqref{prob:dpca}. In the experiments, we set $\xi$ and $n$ to $0.8$ and $8$, respectively.

Each algorithm employs a fixed step size $\alpha=\frac{\hat{\beta}n}{\sum_{i=1}^n m_i}$, and the grid search is carried out to determine the optimal $\hat{\beta}$. Both DRPGT and DRGTA utilize the polar decomposition for their retraction operations. The connectivity between agents is simulated using various graph matrices like the Erdos-Renyi (ER) network with probability settings $p = 0.3, 0.6$ and the Ring network. In addition, we opted for the Metropolis constant edge weight matrix \cite{shi2015extra} as our choice of the mixing matrix $W$.

The results are presented in Figures \ref{fig:diff-DPCA} and \ref{fig:diff-algo}. It can be seen from Figure \ref{fig:diff-DPCA} that
DDRS with the multiple-step consensus (i.e., $t=10$) allows using a larger step size and converges faster than those with the single-step consensus (i.e., $t=1$). Besides, a denser graph (e.g., ER $p=0.6$) will give better solutions. For Figure \ref{fig:diff-algo}, we see that DDRS converges much faster than DPRGT and DRGTA, and
DPRGT and DRGTA have very close trajectories on the consensus error, the objective function, the gradient norm, and the distance to the global optimum.

\begin{figure}[!htb]
    \centering
    \subfloat{
        \includegraphics[width=0.23\textwidth]{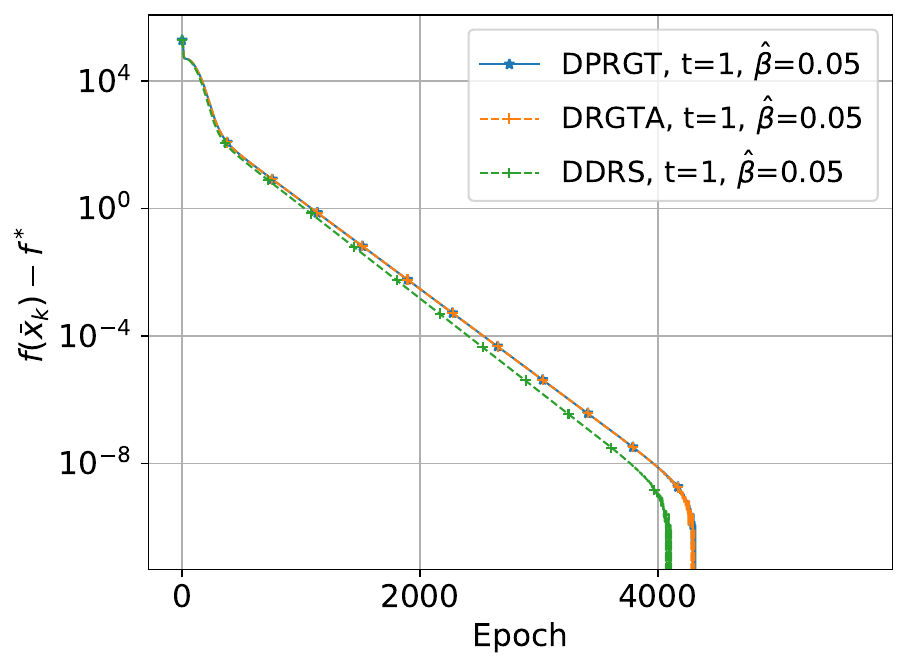}
    }
    \subfloat{
        \includegraphics[width=0.23\textwidth]{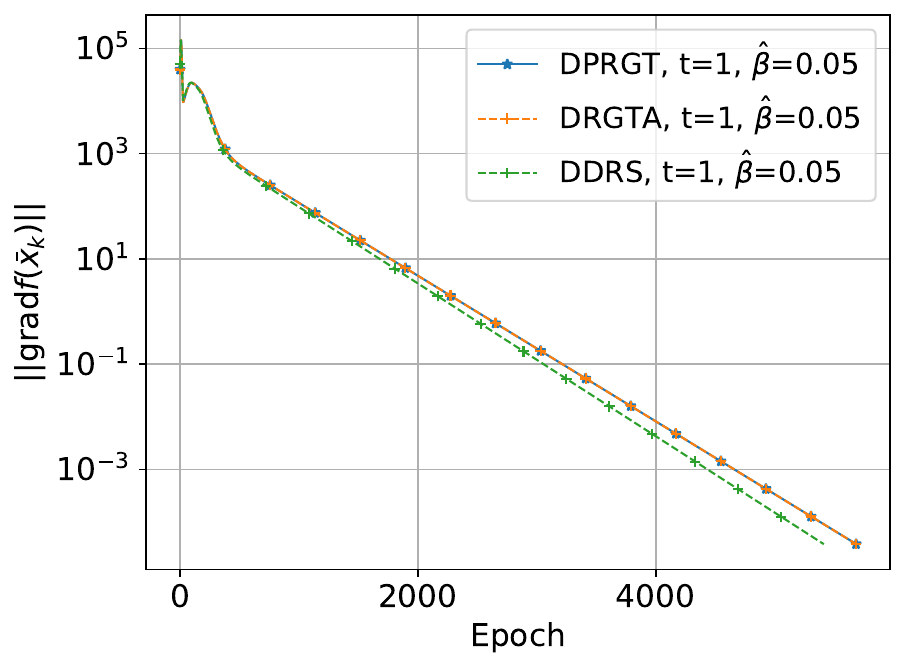}
    }\\
    \subfloat{
        \includegraphics[width=0.23\textwidth]{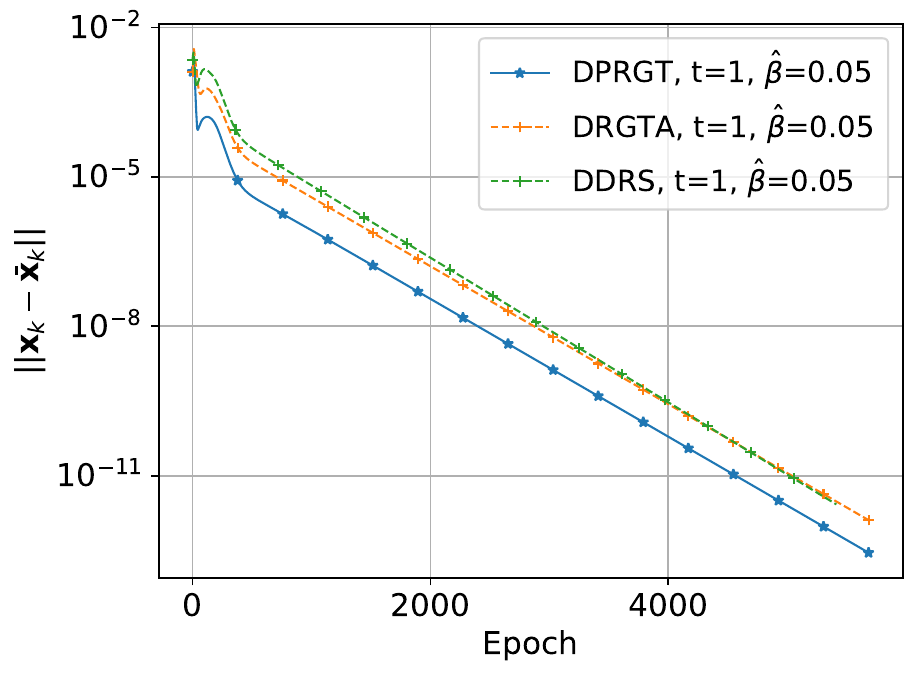}
    }
    \subfloat{
        \includegraphics[width=0.23\textwidth]{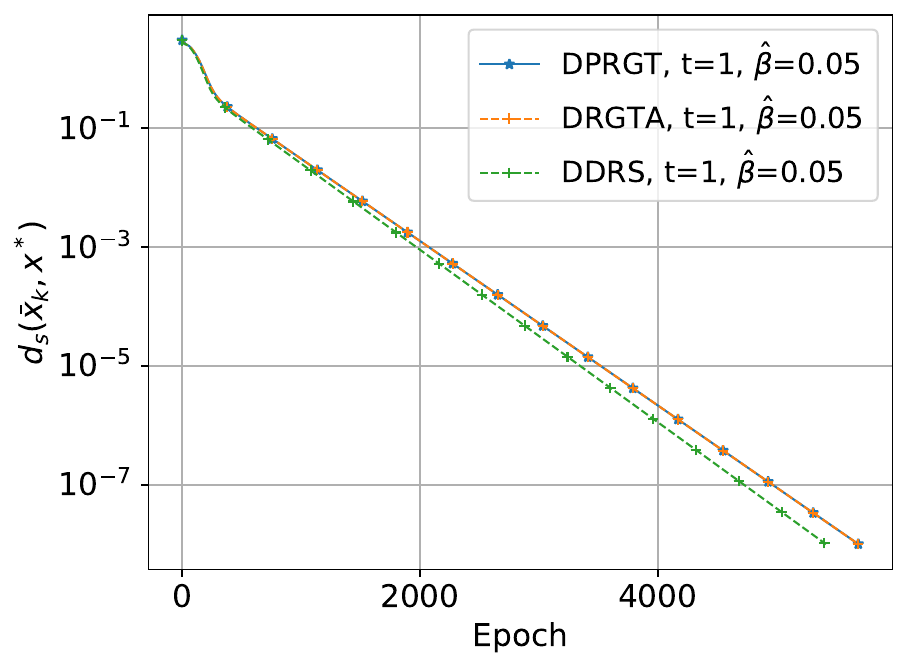}
    }\\
    \caption{Numerical results of different algorithms for solving DPCA on the Mnist dataset.}
    \label{fig:diff-algo-mnist}
\end{figure}

\subsection{Mnist dataset}
To evaluate the efficiency of our proposed method, we also conduct numerical tests on the Mnist dataset \cite{lecun1998mnist}. The testing set, consisting of 60000 handwritten images of size $32 \times 32$, is used to generate $A_i$'s. We first normalize the data matrix by dividing 255 and randomly split the data into $n=8$ agents with equal cardinality. Then, each agent holds a local matrix $A_i$ of dimension $\frac{60000}{n} \times 784$. We compute the first 5 principal components, i.e., $d=784, r=5$.

For all algorithms, we use the fixed step sizes $\alpha = \frac{\hat{\beta}}{60000}$ with a best-chosen $\hat{\beta}$.
Echoing the findings from the synthetic dataset, Figure \ref{fig:diff-algo-mnist} reveals that DDRS consistently outperforms both DPRGT and DRGTA, with the latter two showcasing very similar behaviors.

\section{Conclusion}
Through a novel fusion of gradient tracking, DRS, and manifold optimization, we propose two efficient decentralized Douglas-Rachford splitting algorithms, DDRS and iDDRS, for solving decentralized smooth optimization problems on compact submanifolds. To address the nonconvexity challenge of the manifold constraint, we employ the fundamental concept of proximal smoothness of the compact submanifold and establish the best-known convergence rate $\mathcal{O}(1/K)$ for both algorithms. Numerical experiments validate the superior performance of DDRS. Our work can also be readily extended to solve more general decentralized nonsmooth optimization problems, e.g., the strong prox-regular function \cite{hu2023projected}. 

\appendices

\section{Useful lemmas}

For a $L$-smooth $h$, its proximal operator ${\rm prox}_{\alpha h}$ is strongly monotone and cocoercive for small enough $\alpha$ \cite[Proposition 2.3]{themelis2020douglas}.
\begin{proposition} \label{prop}
    Let $h$ be a $L$-smooth function and $0<\alpha < 1/L$. Then, ${\rm prox}_{\alpha h}$ is $(\frac{1}{1+ \alpha L})$-strongly monotone and $(1- \alpha L)$-cocoercive, in the sense that
    \[ \iprod{x- x^\prime}{s - s^\prime} \geq \frac{1}{1 + \alpha L} \|s - s^\prime\|^2 \] 
    and 
    \[ \iprod{x - x^\prime}{s - s^\prime} \geq (1- \alpha L)\|x - x^\prime\|^2  \]
    for all $s, s^\prime$, where $x = {\rm prox}_{\alpha h}(s)$ and $x^\prime = {\rm prox}_{\alpha h}(s^\prime)$. In particular,
    \be \label{eq:prox-iso} 
    \frac{1}{1 + \alpha L}\|s - s^\prime\| \leq \|x - x^\prime\| \leq \frac{1}{1 - \alpha L}\|s - s^\prime\|.
    \ee
\end{proposition}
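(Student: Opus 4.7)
The plan is to exploit the fact that for $0 < \alpha < 1/L$ the proximal subproblem is strongly convex, extract the resolvent identity $s = x + \alpha\nabla h(x)$, and then read off the two displayed inequalities either by direct manipulation (for cocoercivity) or by invoking Baillon--Haddad on an auxiliary convex function (for the sharp strong-monotonicity constant). First, since $\nabla h$ is $L$-Lipschitz, the function $\psi_s(y) := h(y) + \tfrac{1}{2\alpha}\|y-s\|^2$ is $(\tfrac{1}{\alpha}-L)$-strongly convex, so $x := \prox_{\alpha h}(s)$ is its unique minimizer and is characterized by the first-order condition $s = x + \alpha\nabla h(x)$. Subtracting the same relation at $s'$ yields the resolvent identity
\[ s - s' = (x - x') + \alpha\bigl(\nabla h(x) - \nabla h(x')\bigr). \]

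For the $(1-\alpha L)$-cocoercivity, I would take the inner product of the resolvent identity with $x - x'$,
\[ \iprod{s-s'}{x-x'} = \|x-x'\|^2 + \alpha\iprod{\nabla h(x)-\nabla h(x')}{x-x'}, \]
and apply Cauchy--Schwarz together with the $L$-Lipschitzness of $\nabla h$ to bound the last inner product below by $-L\|x-x'\|^2$, yielding $\iprod{s-s'}{x-x'} \geq (1-\alpha L)\|x-x'\|^2$. For the $\tfrac{1}{1+\alpha L}$-strong monotonicity, the cleanest route is to introduce the auxiliary function $\tilde\psi(y) := h(y) + \tfrac{1}{2\alpha}\|y\|^2$, which is convex and $(\tfrac{1}{\alpha}+L)$-smooth by the same Hessian-bound argument, and to invoke Baillon--Haddad: $\nabla\tilde\psi$ is $\tfrac{\alpha}{1+\alpha L}$-cocoercive. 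Since $\nabla\tilde\psi(x) = s/\alpha$, substituting and rescaling by $\alpha$ yields $\iprod{x-x'}{s-s'} \geq \tfrac{1}{1+\alpha L}\|s-s'\|^2$. The two norm inequalities in \eqref{eq:prox-iso} then follow immediately by applying Cauchy--Schwarz once to each of the two displayed bounds: combining the strong monotonicity with $\iprod{x-x'}{s-s'}\leq\|x-x'\|\|s-s'\|$ produces the lower bound $\|x-x'\|\geq\tfrac{1}{1+\alpha L}\|s-s'\|$, and combining the cocoercivity with the same Cauchy--Schwarz estimate produces the upper bound $\|x-x'\|\leq\tfrac{1}{1-\alpha L}\|s-s'\|$.

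The main obstacle is the sharp constant $\tfrac{1}{1+\alpha L}$ in the strong monotonicity. A naive attempt---combining the cocoercivity with the reverse bound $\|s-s'\| \leq (1+\alpha L)\|x-x'\|$ (which is immediate from the resolvent identity and $L$-Lipschitzness)---would only produce the strictly weaker constant $(1-\alpha L)/(1+\alpha L)^2$. Recovering the tight factor $\tfrac{1}{1+\alpha L}$ essentially requires treating $\prox_{\alpha h}$ as the gradient $\nabla\tilde\psi^{*}$ of the Fenchel conjugate of $\tilde\psi$ (so that it inherits the reciprocal constants), or equivalently invoking Baillon--Haddad directly, so that the $(\tfrac{1}{\alpha}+L)$-smoothness of $\tilde\psi$ transfers to $\tfrac{\alpha}{1+\alpha L}$-strong convexity of $\tilde\psi^{*}$ with the correct tight constant.
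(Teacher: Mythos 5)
Your proof is correct. The paper itself gives no proof of this proposition — it is quoted verbatim from \cite[Proposition 2.3]{themelis2020douglas} — and your argument (resolvent identity $s=x+\alpha\nabla h(x)$ from strong convexity of the prox subproblem, a direct Cauchy--Schwarz bound for the $(1-\alpha L)$-cocoercivity, and Baillon--Haddad applied to the convex, $(\tfrac{1}{\alpha}+L)$-smooth function $\tilde\psi=h+\tfrac{1}{2\alpha}\|\cdot\|^2$ for the sharp $\tfrac{1}{1+\alpha L}$ constant) is essentially the standard proof of that cited result, with the final norm bounds following correctly from Cauchy--Schwarz.
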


We give the following bound on the distance between ${\rm prox}_{\alpha f}(x)$ and $x$ for a $L$-smooth $h$ and $\alpha < 1/(2L)$.
\begin{lemma} \label{lem:bound-xs-inex}
Let $f$ be a $L$-smooth function and $0 < \alpha <1/(2L)$. Then, for any $x = {\rm prox}_{\alpha f}(s + \mu)$ with $\|\mu\| \leq \epsilon$,
it holds that
\be \label{eq:bound-xs-2-inex} \|s - x\| \leq \alpha \left (3\|\nabla f(0)\| +   2L\|s\| + 2\epsilon  \right) + \epsilon. \ee
When $\epsilon = 0$, it reduced to
\be \label{eq:bound-xs-2} \|s - x\| \leq \alpha \left (3\|\nabla f(0)\| +   2L\|s\| \right). \ee
\end{lemma}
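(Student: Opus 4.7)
The plan is to extract $\|s - x\|$ directly from the optimality condition of the proximal operator and then bound the gradient that appears via the $L$-smoothness of $f$, using $\alpha L < 1/2$ to close a fixed-point-style inequality.

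\textbf{Step 1 (Optimality condition).} Since $x = \prox_{\alpha f}(s + \mu)$, the first-order optimality condition reads
\[
x - (s + \mu) + \alpha \nabla f(x) = 0,
\]
so that $s - x = \alpha \nabla f(x) - \mu$. The triangle inequality together with $\|\mu\| \le \epsilon$ gives
\[
\|s - x\| \le \alpha \|\nabla f(x)\| + \epsilon.
\]
Hence the task reduces to controlling $\|\nabla f(x)\|$ in terms of $\|s\|$, $\|\nabla f(0)\|$, and $\epsilon$.

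\textbf{Step 2 (Bounding the gradient via $L$-smoothness).} By $L$-smoothness of $f$,
\[
\|\nabla f(x)\| \le \|\nabla f(0)\| + L\|x\|.
\]
To bound $\|x\|$, I write $x = (s + \mu) + (x - s - \mu)$ and use the optimality identity $x - (s+\mu) = -\alpha \nabla f(x)$ to obtain $\|x\| \le \|s\| + \epsilon + \alpha \|\nabla f(x)\|$. Substituting yields
\[
(1 - \alpha L)\|\nabla f(x)\| \le \|\nabla f(0)\| + L\|s\| + L\epsilon.
\]
Since $\alpha < 1/(2L)$, we have $1 - \alpha L > 1/2$, so dividing through produces
\[
\|\nabla f(x)\| \le 2\|\nabla f(0)\| + 2L\|s\| + 2L\epsilon.
\]

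\textbf{Step 3 (Assembling the final bound).} Plugging this into the estimate from Step 1 and multiplying by $\alpha$,
\[
\|s - x\| \le 2\alpha\|\nabla f(0)\| + 2\alpha L\|s\| + 2\alpha L \epsilon + \epsilon.
\]
Using $2\alpha L \le 1$ to absorb $2\alpha L \epsilon \le \epsilon$ if a cleaner form is desired, or simply relaxing constants, I obtain
\[
\|s - x\| \le \alpha\bigl(3\|\nabla f(0)\| + 2L\|s\| + 2\epsilon\bigr) + \epsilon,
\]
which is \eqref{eq:bound-xs-2-inex}. Setting $\mu = 0$, hence $\epsilon = 0$, immediately yields \eqref{eq:bound-xs-2}.

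\textbf{Main obstacle.} There is no genuine difficulty here; the proof is a short exercise. The only subtle point is that $\|\nabla f(x)\|$ appears on both sides of the $L$-smoothness estimate once $\|x\|$ is expanded through the optimality condition, so one must exploit the stepsize condition $\alpha L < 1/2$ to invert the coefficient $(1 - \alpha L)$ and close the argument. The constants $3$ and $2\epsilon$ in the stated bound provide a comfortable slack so no sharpness is required.
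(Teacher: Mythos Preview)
Your proposal is correct and follows essentially the same route as the paper: derive the optimality identity $x - (s+\mu) + \alpha\nabla f(x) = 0$, use $L$-smoothness to bound $\|\nabla f(x)\|$ in terms of $\|x\|$, feed the optimality identity back in to get a self-referential inequality, and close it using $\alpha L < 1/2$. The only cosmetic difference is that the paper first isolates a bound on $\|x\|$ and then substitutes, whereas you isolate $\|\nabla f(x)\|$ directly; the arithmetic is identical.
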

\begin{proof}
   We only prove \eqref{eq:bound-xs-2-inex}. It follows from the definition of ${\rm prox}_{\alpha f}$ that
    \[ x - s + \alpha \nabla f(x) = \mu. \]
    Then, by using the $L$-smoothness of $f$ and the triangle inequality, we have
    \[ \begin{aligned}
        \|x\| & = \|s - \alpha \nabla f(x)  + \mu\| \leq \|s\| + \epsilon + \alpha (\|\nabla f(0)\| + L\|x\|).
    \end{aligned} \]
    This gives $\|x\| \leq (\|s\| +\epsilon+ \alpha \|\nabla f(0)\| )/(1 - \alpha L)$. Therefore, 
    \[ \begin{aligned}
        \|s - x\| & = \alpha \|\nabla f(x)\| + \epsilon \leq \alpha (\| \nabla f(0) \| + L \|x\|) + \epsilon \\
        & \leq \alpha \left (\|\nabla f(0)\| +  \frac{L\|s\| +\epsilon+ \| \nabla f(0)\| }{1 - \alpha L} \right) + \epsilon \\
        & \leq \alpha \left (3\|\nabla f(0)\| +   2L\|s\| + 2\epsilon  \right) + \epsilon.
    \end{aligned}
      \]
      The proof is completed. 
\end{proof}

\section{Convergence of DDRS}\label{appen-1}

\subsection{Proof of Lemma \ref{lem:stay-x}}

Building upon Lemma \ref{lem:bound-xs-inex}, we can establish that in Algorithm \ref{alg:drgta}, when $\|\bs_0\|_{F,\infty}$ is bounded by a specific constant, it follows that both $\|\bs_k\|_{F,\infty}$ and $\|\bx_k -\bar{\bx}_k \|_{F,\infty}$ are also bounded, for all $k>0$. Furthermore, one can shows that $\|\nabla f(x_{i,k})\|$ and $\|d_{i,k}\|$ are bounded.

\begin{lemma}\label{lem:bound-sk-xksk}
Suppose that Assumption \ref{assum-w} and \ref{assum:f} hold.  Let $\{\bs_k,\bx_k,\by_k,\bz_k\}$ be generated by Algorithm \ref{alg:drgta}. Let $\delta_2$ be defined in \eqref{def:delta}. If  $0 < \alpha \leq \min\{\frac{1}{2L}, \frac{\delta_2}{3\|\nabla f(0)\| + 2L\left(\zeta + \delta_2\right)} \}$, $t \geq  \left\lceil \log_{\sigma_2}(\frac{1}{4\sqrt{n}}) \right \rceil$, $\|\mathbf{d}_0\|_{F,\infty} \leq 4\delta_2$ and $\|\bs_0\|_{F,\infty} \leq \zeta + \delta_2$, then it holds that for any $k$,
    \begin{align}
    \|\bs_{k}\|_{F,\infty} \leq \zeta + \delta_2, \; \|\bx_k & - \bs_k\|_{F,\infty} \leq \delta_2, \label{eq:bound-sk} \\
        \|\nabla f(x_{i,k})\| &\leq  \delta_2 / \alpha, \; i\in [n], \label{eq:bound-nabla-f} \\
         \|d_{i,k}\| &\leq 4\delta_2, \;\;\; i\in [n].\label{eq:bound-dk}
    \end{align}
\end{lemma}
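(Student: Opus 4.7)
The plan is to prove the three bounds \eqref{eq:bound-sk}--\eqref{eq:bound-dk} simultaneously by induction on $k$, exploiting the observation that the gradient bound \eqref{eq:bound-nabla-f} is merely a restatement of the residual bound $\|x_{i,k}-s_{i,k}\|\leq\delta_2$ via the prox-optimality identity $s_{i,k}-x_{i,k}=\alpha\nabla f_i(x_{i,k})$. The base case is immediate from the initialization $x_{i,0}=s_{i,0}$ (which gives zero residual and $d_{i,0}=0$) combined with the assumed $\|\bs_0\|_{F,\infty}\leq\zeta+\delta_2$; the gradient bound at $k=0$ then follows from Lemma \ref{lem:bound-xs-inex} with $\mu=0$ under the smallness condition on $\alpha$.

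For the inductive step, I would first derive a compact expression for $s_{i,k+1}$ by substituting $s_{i,k}-x_{i,k}=\alpha\nabla f_i(x_{i,k})$ into the $s$-update, giving $s_{i,k+1}=\alpha\nabla f_i(x_{i,k})+z_{i,k}$. Since $z_{i,k}\in\Mcal$ has its norm controlled by the compactness constant $\zeta$ and $\|\alpha\nabla f_i(x_{i,k})\|\leq\delta_2$ by the induction hypothesis, \eqref{eq:bound-sk} at $k+1$ follows. Feeding $\|s_{i,k+1}\|\leq\zeta+\delta_2$ back into Lemma \ref{lem:bound-xs-inex} together with the stepsize condition $\alpha\leq\delta_2/(3\|\nabla f(0)\|+2L(\zeta+\delta_2))$ delivers $\|x_{i,k+1}-s_{i,k+1}\|\leq\delta_2$ and, via $\|s-x\|=\alpha\|\nabla f(x)\|$, the gradient bound \eqref{eq:bound-nabla-f} at $k+1$.

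The hard part will be propagating the tracking bound \eqref{eq:bound-dk}, since $\bd_k$ accumulates gradient increments over all past iterations. My approach is to split the mixing term as $\sum_j W_{ij}^t d_{j,k} = \frac{1}{n}\sum_j d_{j,k} + \sum_j (W_{ij}^t - 1/n)d_{j,k}$. For the average, I would first establish the mean-preservation identity $\frac{1}{n}\sum_i d_{i,k}=\frac{1}{n}\sum_i(x_{i,k}-s_{i,k})$ which holds for every $k$ by induction, using doubly-stochasticity of $W^t$ together with the initial choice $d_{i,0}=x_{i,0}-s_{i,0}$; the residual bound at $k$ then controls the first piece by $\delta_2$. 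For the second piece, Cauchy--Schwarz combined with the spectral bound $\|(W^t-J)e_i\|\leq\sigma_2^t$ gives the estimate $\sigma_2^t\sqrt{n}\,\|\bd_k\|_{F,\infty}\leq 4\sigma_2^t\sqrt{n}\,\delta_2$, which is at most $\delta_2$ precisely when $\sigma_2^t\leq 1/(4\sqrt{n})$---exactly the role of the threshold $t\geq\lceil\log_{\sigma_2}(1/(4\sqrt{n}))\rceil$. Applying the triangle inequality to the $d$-update and using the two residual bounds at iterations $k$ and $k+1$ then closes the induction via $\|d_{i,k+1}\|\leq 2\delta_2+\delta_2+\delta_2=4\delta_2$.
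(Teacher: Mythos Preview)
Your proposal is correct and follows essentially the same approach as the paper: induction on $k$ for all three bounds, linking \eqref{eq:bound-nabla-f} to the residual bound via the prox identity, propagating \eqref{eq:bound-sk} through the $s$-update using $z_{i,k}\in\Mcal$, and handling \eqref{eq:bound-dk} via the mean-preservation identity $\hat d_k=\hat x_k-\hat s_k$ together with the mixing estimate $\sum_j|W_{ij}^t-1/n|\leq\sqrt{n}\,\sigma_2^t\leq 1/4$. The only cosmetic difference is that the paper centers the tracking estimate at $\alpha\hat g_k$ and bounds $\|d_{i,k+1}-\alpha\hat g_k\|\leq 3\delta_2$ before adding back $\|\alpha\hat g_k\|\leq\delta_2$, whereas you split $\sum_j W_{ij}^t d_{j,k}$ into mean plus deviation; both routes yield the same $4\delta_2$.
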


      \begin{proof}
       Firstly, we prove \eqref{eq:bound-sk} by induction. Note that $\|\bs_0\|_{F,\infty} \leq \zeta + \delta_2$ and by Lemma \ref{lem:bound-xs-inex},
    \be \label{bound-xi0-si0}
     \begin{aligned}
        \|x_{i,0} - s_{i,0}\| & \leq \alpha \left( \|\nabla f(0)\| + \frac{L\|s_{i,0}\| + \|\nabla f(0)\| }{1 - \alpha L} \right) \\ 
        & \leq \alpha \left(3\|\nabla f(0)\| + 2L\left(\zeta + \delta_2\right) \right) \leq \delta_2,
    \end{aligned}
    \ee
    which implies that $\|\bx_0 - \bs_0\|_{F,\infty} \leq \delta_2$. Suppose for some $k\geq 0$ that $\|\bs_k\|_{F,\infty} \leq \zeta + \delta_2$ and $\|\bx_k - \bs_k\|_{F,\infty} \leq \delta_2$.  Then we have 
     \[ \|\bs_{k+1}\|_{F,\infty} \leq \|\bz_k \|_{F,\infty} + \|\bx_k - \bs_k\|_{F,\infty} \leq \zeta + \delta_2. \]
     Similar with \eqref{bound-xi0-si0},  we further have that $ \|x_{i,k+1} - s_{i,k+1}\| \leq \delta_2$, for any $i \in [n]$, which implies $\|\bx_k  - \bs_k\|_{F,\infty} \leq \delta_2$.  Secondly, \eqref{eq:bound-nabla-f} follows from the fact that $x_{i,k} = s_{i,k} - \alpha \nabla f(x_{i,k})$. 
 Finally,  we prove \eqref{eq:bound-dk} by induction.  Suppose for some $k\geq 0$ such that \eqref{eq:bound-dk} holds.  It follows from \eqref{eq:z-track} and $d_{i,0} = x_{i,0} - s_{i,0}$ that
       \be\label{eq:hatd=hatxs}
       \hat{d}_k  = \hat{x}_k - \hat{s}_k = \alpha \hat{g}_k,
       \ee
       where $\hat{g}_k: = \frac{1}{n}\sum_{i=1}^n \nabla f(x_{i,k})$. Then we have that
    \begin{equation}
\begin{aligned}
&\| d_{i,k+1} - \alpha \hat{g}_k\| \\
& = \| \sum_{j=1}^n W_{ij}^td_{j,k} - \alpha \hat{g}_k +\alpha \nabla f_i(x_{i,k+1}) - \alpha\nabla f_i(x_{i,k}) \| \\
&  \overset{\eqref{eq:hatd=hatxs}}{\leq}  \| \sum_{j=1}^n (W_{ij}^t - \frac{1}{n})  d_{j,k} \|  + \alpha\|\nabla f_i(x_{i,k+1}) - \nabla f_i(x_{i,k}) \| \\
& \leq \sigma_2^t \sqrt{n} \max_i \| d_{i,k}\| +2 \delta_2 \\
% & \leq \sigma_2^t \sqrt{n} \max_i\| d_{i,k}\| + 2\delta_2 \\
% & \leq \sigma_2^t \sqrt{n} \max_i\| d_{i,k}\| + 2\delta_2 \\
& \leq \frac{1}{4}\max_i \| d_{i,k}\| + 2\delta_2  \leq 3\delta_2,
\end{aligned}
\end{equation}
where the second inequality follows from \eqref{eq:bound-nabla-f} and the bound on the total variation distance between any row of $W^t$ and $\frac{1}{n}\mathbf{1}$ \cite{diaconis1991geometric,boyd2004fastest}, i.e., 
\be\label{eq:bound:W-1}\max_i \sum_{j=1}^n |W_{i,j}^t - \frac{1}{n}| \leq \sqrt{n} \sigma_2^t. \ee
Hence,
\be
\begin{aligned}
\| d_{i,k+1}   \| & \leq\| d_{i,k+1} - \alpha \hat{g}_k\| + \|\alpha \hat{g}_k\| \\
&\leq  3\delta_2 + \alpha \max_{i}\|\nabla f(x_{i,k})\| \overset{\eqref{eq:bound-nabla-f}}{\leq}3\delta_2 + \delta_2 \leq 4\delta_2.
\end{aligned}
\ee  The proof is completed.
\end{proof}

The following lemma shows that when $\bx_k \in \mathcal{N}_1$, the term $ \sum_{j=1}^n {W_{ij}^t} x_{j,k} +d_{i,k}$ will be lie in the neighborhood $\bar{U}_{\Mcal} (\gamma)$.

\begin{lemma} \label{lem:stay-neighborhood}
 Suppose that Assumption \ref{assum-w} and \ref{assum:f} hold. Let $\{\bs_k,\bx_k,\by_k,\bz_k\}$ be generated by Algorithm \ref{alg:drgta}.    Under the same condition on $\alpha, \mathbf{d}_0$ and $\bs_0$ as in Lemma \ref{lem:bound-sk-xksk}, if $t\geq \left \lceil \max\{ \log_{\sigma_2}(\frac{1}{4\sqrt{n}}), \log_{\sigma_2}(\frac{\delta_3}{\delta_2 \sqrt{n}}) \} \right \rceil$ and $\bx_k \in \mathcal{N}_1$, then it holds that
\begin{align}
    \sum_{j=1}^n {W_{ij}^t} x_{j,k} +d_{i,k} & \in \bar{U}_{\Mcal} (\gamma), ~ i\in [n], \label{eq:neibohood1}
\end{align}
where $\delta_1,\delta_2,\delta_3$ are defined in \eqref{def:delta} and $\mathcal{N}_1$ is defined in \eqref{def:neiborhood}.
\end{lemma}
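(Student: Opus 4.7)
The plan is to bound the distance from $\sum_{j=1}^n W_{ij}^t x_{j,k} + d_{i,k}$ to a specific point on $\Mcal$, namely $\bar{x}_k \in \Pcal_{\Mcal}(\hat{x}_k)$, and show this distance is at most $\gamma$. By the definition of $\bar{U}_{\Mcal}(\gamma)$, this would immediately give the desired conclusion.

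The strategy is to insert $\hat{x}_k$ as an intermediate point and apply the triangle inequality:
\begin{equation*}
\Big\|\sum_{j=1}^n W_{ij}^t x_{j,k} + d_{i,k} - \bar{x}_k\Big\| \leq \Big\|\sum_{j=1}^n (W_{ij}^t - \tfrac{1}{n}) x_{j,k}\Big\| + \|\hat{x}_k - \bar{x}_k\| + \|d_{i,k}\|.
\end{equation*}
For the middle term, the assumption $\bx_k \in \mathcal{N}_1$ directly yields $\|\hat{x}_k - \bar{x}_k\| \leq \delta_1$. For the last term, Lemma \ref{lem:bound-sk-xksk} gives $\|d_{i,k}\| \leq 4\delta_2$. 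The first term is handled by the total variation bound \eqref{eq:bound:W-1}, which yields $\|\sum_j (W_{ij}^t - 1/n) x_{j,k}\| \leq \sqrt{n}\sigma_2^t \max_j \|x_{j,k}\|$.

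The main technical point is bounding $\max_j \|x_{j,k}\|$. Using Lemma \ref{lem:bound-sk-xksk} together with the definition of $\zeta$, we have $\|x_{j,k}\| \leq \|s_{j,k}\| + \|x_{j,k} - s_{j,k}\| \leq (\zeta + \delta_2) + \delta_2 = \delta_3$. Combining this with the lower bound on $t$, specifically $t \geq \log_{\sigma_2}(\delta_3/(\delta_2\sqrt{n}))$, gives $\sqrt{n}\sigma_2^t \delta_3 \leq \delta_2$, so the consensus mixing error contributes at most $\delta_2$.

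Putting the three bounds together yields
\begin{equation*}
\Big\|\sum_{j=1}^n W_{ij}^t x_{j,k} + d_{i,k} - \bar{x}_k\Big\| \leq \delta_2 + \delta_1 + 4\delta_2 = \delta_1 + 5\delta_2.
\end{equation*}
A direct substitution using $\delta_1 = \gamma/4$ and $\delta_2 = \delta_1/12 = \gamma/48$ gives $\delta_1 + 5\delta_2 = 12\delta_2 + 5\delta_2 = 17\gamma/48 < \gamma$. Since $\bar{x}_k \in \Mcal$, this shows $\mathrm{dist}(\sum_j W_{ij}^t x_{j,k} + d_{i,k}, \Mcal) \leq 17\gamma/48 \leq \gamma$, which establishes \eqref{eq:neibohood1}. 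The only slightly delicate step is ensuring the proper chain of dependence: we need $\bx_k \in \mathcal{N}_1$ (hypothesis), which guarantees $\hat{x}_k \in \bar{U}_{\Mcal}(\gamma)$ so that $\bar{x}_k = \Pcal_{\Mcal}(\hat{x}_k)$ is a well-defined singleton; this is implicit in the setup following Definition \ref{def:station}. The constants are deliberately chosen to leave a comfortable margin, so no fine-tuning is needed.
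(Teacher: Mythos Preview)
Your proof is correct and follows essentially the same approach as the paper: both use the triangle inequality with $\hat{x}_k$ as intermediate point, the total-variation bound \eqref{eq:bound:W-1} on the mixing error, the hypothesis $\bx_k\in\mathcal{N}_1$, and the bound $\|d_{i,k}\|\le 4\delta_2$ from Lemma~\ref{lem:bound-sk-xksk}, arriving at exactly $\delta_1+5\delta_2=17\gamma/48\le\gamma$. The only cosmetic difference is that you obtain $\|x_{j,k}\|\le\delta_3$ directly from \eqref{eq:bound-sk} via $\|s_{j,k}\|+\|x_{j,k}-s_{j,k}\|$, whereas the paper re-derives it through the relation $x_{i,k+1}=z_{i,k}+\alpha(\nabla f_i(x_{i,k})-\nabla f_i(x_{i,k+1}))$; your route is slightly cleaner.
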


\begin{proof}
    It follows from the updated rule of $x_{i,k+1}$ and $s_{i,k+1}$ in Algorithm \ref{alg:drgta} that
     \be\label{eq:relation:xk+1-zk} \begin{aligned}
         x_{i,k+1} & = x_{i,k+1} - s_{i,k+1} + s_{i,k+1} \\ & = x_{i,k+1} - s_{i,k+1} + s_{i,k} - x_{i,k} + z_{i,k} \\
         & = z_{i,k} + \alpha \nabla f(x_{i,k})  - \alpha \nabla f(x_{i,k+1}).
     \end{aligned}
       \ee
       Then we have that 
\be\label{eq:w-xjk-hatx}
\begin{aligned}
\|x_{i,k+1}\| & \leq \|x_{i,k+1} - z_{i,k}\| + \|z_{i,k}\| \\
& \leq \alpha \| \nabla f_i(x_{i,k+1}) - \nabla f_i(x_{i,k}) \| + \|z_{i,k}\| \\
& \leq  2\delta_2 + \zeta =\delta_3,
\end{aligned}
\ee
where the last inequality follows from \eqref{eq:bound-nabla-f} and Assumption \ref{assum:f}.
 It follows that for any $i\in [n]$,
\[ \begin{aligned}
&\| \sum_{j=1}^n W_{ij}^t x_{j,k} +  d_{i,k} - \bar{x}_k  \| \\
 &\leq    \| \sum_{j=1}^n W_{ij}^t x_{j,k} + d_{i,k} - \hat{x}_k  \| + \| \hat{x}_k - \bar{x}_k \| \\
 & \leq \|\sum_{j=1}^n (W_{ij}^t - \frac{1}{n}) x_{j,k} \| + \|d_{i,k}\| +\| \hat{x}_k - \bar{x}_k \| \\ 
 & \leq   \sum_{j=1}^n \left|W_{ij}^t - \frac{1}{n}\right| \max_j \| x_{j,k}\| +4 \delta_2 + \delta_1 \\ 
 & \leq \sqrt{n} \sigma_2^t  \delta_3 + 4 \delta_2 + \delta_1\\
 & \leq 5\delta_2 + \delta_1 \leq \frac{17}{12}\delta_1\leq  \gamma, 
\end{aligned}\]
 where the fourth inequality follows from \eqref{eq:bound:W-1}. 
 Combining the fact that $\bar{x}_k \in \Mcal$, we obtain \eqref{eq:neibohood1}. 
The proof is completed.
\end{proof}

\begin{proof}[Proof of  Lemma \ref{lem:stay-x}]
We prove it by induction on both $\|\hat{z}_k - \bar{z}_k\|$ and $\|\hat{x}_k - \bar{x}_k\|$. Suppose for some $k \geq 0$ such that \eqref{bound-dk-xk-zk} holds. 
       Then we have that
\be
\begin{aligned}
    \|\hat{x}_{k+1} - \bar{x}_{k+1}\| & \leq \| \hat{x}_{k+1} - \bar{z}_k\| \\
    & \leq \| \hat{x}_{k+1} - \hat{z}_k\| + \| \hat{z}_{k} - \bar{z}_k\| \\
    & \leq \frac{1}{n}\sum_{i=1}^n \|x_{i,k+1} - z_{i,k} \| +10 \delta_2\\
    & \overset{\eqref{eq:relation:xk+1-zk}}{\leq} \max_{i} \alpha\| \nabla f_{i}(x_{i,k+1}) - \nabla f_i(x_{i,k}) \| + 10\delta_2 \\
    & \overset{\eqref{eq:bound-nabla-f}}{\leq}  2\delta_2 + 10\delta_2 \leq \delta_1,
\end{aligned}
\ee
where the first inequality use $\bar{x}_k = \Pcal_{\Mcal}(\hat{x}_k)$ and $\bar{z}_k \in \Mcal$.  
In addition, 
\be
\begin{aligned}
    \|\hat{z}_{k+1} - \bar{z}_{k+1} \| & \leq \|\hat{z}_{k+1} - \bar{x}_{k+1} \| \\ 
    & \leq \frac{1}{n}\sum_{i=1}^n \|z_{i,k+1} - \bar{x}_{k+1}\| \\
    & \leq \frac{2}{n}\sum_{i=1}^n \|\sum_{j=1}^n W_{ij}^tx_{j,k} + d_{i,k} - \hat{x}_{k+1}\| \\
    & \leq 2(\sqrt{n} \sigma_2^t  \delta_3 + 4\delta_2) \leq 10\delta_2.
\end{aligned}
\ee
Finally, \eqref{eq:neibohood2} follows from Lemma \ref{lem:stay-neighborhood}. We completed the proof.
\end{proof}

\subsection{Proof of Lemma \ref{lem:descent}}

\begin{proof}
    It follows from the definition of $\varphi_{\alpha}$ in \eqref{eq:DR-env} and $\bar{\by}_{k}\in \mathcal{C}$ that
\be\label{eq:dec-0}
\begin{aligned}
 &  \varphi_{\alpha}^{\rm DR}(\bs_{k})  \\
  \leq & f (\bx_{k+1}) +    \left<\nabla f (\bx_{k+1}), \bar{\by}_{k} - \bx_{k+1}\right> + \frac{1}{2\alpha}\|\bar{\by}_{k} - \bx_{k+1}\|^2 \\
  =  & f (\bx_{k+1}) +   \left<\nabla f (\bx_{k+1}), \bx_k - \bx_{k+1}\right> + \left<\nabla f (\bx_{k+1}), \bar{\by}_k - \bx_k\right> \\
 &+ \frac{1}{2\alpha}\|\bar{\by}_{k} - \bx_{k+1}\|^2 \\
    \leq & f (\bx_{k}) + \frac{L}{2}\|\bx_{k+1} - \bx_k \|^2 +  \left<\nabla f (\bx_{k+1}), \bar{\by}_k - \bx_k\right>\\
    & + \frac{1}{2\alpha}\|\bar{\by}_{k} - \bx_{k+1}\|^2,
\end{aligned}
\ee
    where the second inequality is due to the $L_f$-smoothness of $f_i$'s and $L>L_f$.  Then we have that
    \be\label{eq:dec-1}
\begin{aligned}
  & \varphi_{\alpha}^{\rm DR}(\bs_{k})  
    \leq  f (\bx_{k}) + \frac{L}{2}\|\bx_{k+1} - \bx_k \|^2 + \left<\nabla f (\bx_{k}), \bar{\by}_k - \bx_k\right> \\
   & + \frac{1}{2\alpha}\|\bar{\by}_{k} - \bx_{k+1}\|^2 + \left<\nabla f (\bx_{k+1}) - \nabla f (\bx_{k}), \bar{\by}_k - \bx_k\right> \\
    = &\varphi_{\alpha}^{\rm DR}(\bs_{k-1}) +\left<\nabla f (\bx_{k+1}) - \nabla f (\bx_{k}), \bar{\by}_k - \bx_k\right> \\
    & + \frac{1+\alpha L}{2\alpha}\|\bx_{k+1} - \bx_k \|^2 + \frac{1}{\alpha}\left<\bx_{k+1} - \bx_k, \bx_k - \bar{\by}_k\right>.   
\end{aligned}
\ee
    By \eqref{eq:drs-decen}, we have
    \be \label{eq:est-xy} \begin{aligned}
        \bx_k - \bar{\by}_k & = \bx_k - \bz_k + \bz_k - \bar{\by}_k = \bs_k - \bs_{k+1} + \bz_k - \bar{\by}_k \\
        & = \bx_{k} - \bx_{k+1} + \alpha (\nabla f (\bx_k) - \nabla f (\bx_{k+1})) + \bz_k - \bar{\by}_k.
    \end{aligned} \ee
    Plugging \eqref{eq:est-xy} into \eqref{eq:dec-1} gives
    \be \label{eq:dec-2}
        \begin{aligned}
            &\varphi_{\alpha}^{\rm DR}(\bs_{k-1}) -\varphi_{\alpha}^{\rm DR}(\bs_{k}) \\
        \geq & - \alpha \| \nabla f (\bx_{k+1}) - \nabla f (\bx_k)\|^2  + \frac{1}{2\alpha} \|\bx_{k+1} - \bx_k\|^2 \\
        &- \frac{L}{2} \|\bx_{k+1} - \bx_k\|^2  + \iprod{\nabla f (\bx_{k+1}) - \nabla f (\bx_{k})}{\bz_k - \bar{\by}_k} \\
        &- \frac{1}{\alpha}\iprod{\bx_{k+1} - \bx_k}{\bz_k - \bar{\by}_k} \\
        \geq & \frac{1 - \alpha L - 2\alpha^2 L^2}{2\alpha} \|\bx_{k+1} - \bx_k\|^2 \\
        &- \left(L + \frac{1}{\alpha}\right)\|\bx_{k+1} - \bx_k\| \|\bz_k - \bar{\by}_k\|. 
        \end{aligned}
    \ee
    Summing \eqref{eq:dec-2} over $k$ gives
    \[
    \begin{aligned}
        &\varphi_{\alpha}^{\rm DR}(\bs_0) -\varphi_{\alpha}^{\rm DR}(\bs_{k+1}) 
        \geq   \sum_{\ell=0}^k \frac{1 - \alpha L - 2\alpha^2 L^2}{2\alpha} \|\bx_{\ell +1} - \bx_{\ell}\|^2 \\
        &- \frac{1 + \alpha L}{2 \alpha} \sum_{\ell=0}^k \left( \alpha \| \bx_{\ell +1} - \bx_{\ell} \|^2 + \frac{1}{\alpha} \|\bz_\ell - \bar{\by}_{\ell}\|^2 \right),
    \end{aligned}
    \]
    where the inequality is from the Lipschitz continuity of $\nabla f_i$ and the Cauchy-Schwarz inequality. The proof is completed.
\end{proof}

\subsection{Proof of Theorem \ref{them-main}}

The following lemma shows the discrepancy between $\bz_k$ and $\bar{\by}_k$.
\begin{lemma} \label{lem:z-y-diff}
Suppose that Assumption \ref{assum-w} and \ref{assum:f} hold. Under the conditions as same as in Lemma \ref{lem:stay-x}, it holds that
\be \label{eq:d-mean-diff}
\|\mathbf{d}_{k+1} - (\hat{\bx}_{k+1} - \hat{\bs}_{k+1})\| \leq \sigma_2^t \| \mathbf{d}_{k} - (\hat{\bx}_k - \hat{\bs}_k) \| + \alpha L \| \bx_{k+1} - \bx_k \|
\ee
and 
\be \label{eq:z-y-diff} 
\begin{aligned}
\| \bz_{k+1} - \bar{\by}_{k+1} \| 
\leq & 4 \sigma_2^t (\|\bz_k - \bar{\by}_k \| + \alpha L \| \bx_{k+1} - \bx_k \|) \\
&+ 2 \|\mathbf{d}_{k+1} - (\hat{\bx}_{k+1} - \hat{\bs}_{k+1})\|.
\end{aligned}
\ee
 
\end{lemma}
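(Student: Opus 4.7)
The plan rests on two elementary facts that I will use throughout: the spectral contraction $\|(\bW^t-\mathbf{J})\mathbf{v}\|\le \sigma_2^t\|\mathbf{v}\|$ for any $\mathbf{v}$ whose consensus component vanishes, where $\mathbf{J}:=J\otimes I_d$ denotes the stacked averaging operator, and the identity $\hat{\mathbf{d}}_k=\hat{\bx}_k-\hat{\bs}_k$ for every $k$. The latter is \eqref{eq:hatd=hatxs} and follows by an easy induction from $W\mathbf{1}_n=\mathbf{1}_n$ together with the initialization $\mathbf{d}_0=\bx_0-\bs_0$.

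For \eqref{eq:d-mean-diff}, I would subtract the centered recursion $\hat{\mathbf{d}}_{k+1}=\hat{\mathbf{d}}_k+(\hat{\bx}_{k+1}-\hat{\bs}_{k+1})-(\hat{\bx}_k-\hat{\bs}_k)$ from the gradient-tracking update to obtain
\[
\mathbf{d}_{k+1}-\hat{\mathbf{d}}_{k+1}\;=\;\bW^t(\mathbf{d}_k-\hat{\mathbf{d}}_k)\;+\;(\mathbf{I}-\mathbf{J})\bigl[(\bx_{k+1}-\bs_{k+1})-(\bx_k-\bs_k)\bigr].
\]
The first summand is bounded by $\sigma_2^t\|\mathbf{d}_k-\hat{\mathbf{d}}_k\|$. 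For the second, the $x$-update identity $\bs_\ell=\bx_\ell+\alpha\nabla f(\bx_\ell)$ rewrites the bracket as $\alpha(\nabla f(\bx_k)-\nabla f(\bx_{k+1}))$, so $L$-smoothness of $f$ together with $\|\mathbf{I}-\mathbf{J}\|\le 1$ yields the $\alpha L\|\bx_{k+1}-\bx_k\|$ term.

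For \eqref{eq:z-y-diff}, Lemma~\ref{lem:stay-x} places both arguments of the relevant projections into $\bar{U}_{\Mcal}(\gamma)$: the per-agent argument $u_{i,k+1}:=\sum_j W_{ij}^t x_{j,k+1}+d_{i,k+1}$ by \eqref{eq:neibohood2}, and the mean argument $\hat{y}_{k+1}=\hat{x}_{k+1}+\hat{d}_{k+1}$ by $\bx_{k+1}\in\mathcal{N}_1$ and $\|\hat{d}_{k+1}\|\le\delta_2$ (via \eqref{eq:bound-nabla-f}). The Lipschitz bound \eqref{eq:lip-proj-alpha} then gives $\|z_{i,k+1}-\bar{y}_{k+1}\|\le 2\|u_{i,k+1}-\hat{y}_{k+1}\|$ per agent, whence $\|\bz_{k+1}-\bar{\by}_{k+1}\|\le 2\|\ubf_{k+1}-\hat{\by}_{k+1}\|$ after stacking. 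Splitting
\[
\ubf_{k+1}-\hat{\by}_{k+1}\;=\;(\bW^t-\mathbf{J})\bx_{k+1}+(\mathbf{d}_{k+1}-\hat{\mathbf{d}}_{k+1})
\]
and applying the contraction bounds the first summand by $\sigma_2^t\|\bx_{k+1}-\hat{\bx}_{k+1}\|$.

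The main technical step is then to express the residual consensus error $\|\bx_{k+1}-\hat{\bx}_{k+1}\|$ in terms of $\|\bz_k-\bar{\by}_k\|$ and $\|\bx_{k+1}-\bx_k\|$. I would invoke identity \eqref{eq:relation:xk+1-zk}, $\bx_{k+1}=\bz_k+\alpha(\nabla f(\bx_k)-\nabla f(\bx_{k+1}))$, to write
\[
\bx_{k+1}-\hat{\bx}_{k+1}=(\bz_k-\hat{\bz}_k)+(\mathbf{I}-\mathbf{J})\alpha(\nabla f(\bx_k)-\nabla f(\bx_{k+1})),
\]
so that $\|\bx_{k+1}-\hat{\bx}_{k+1}\|\le\|\bz_k-\hat{\bz}_k\|+\alpha L\|\bx_{k+1}-\bx_k\|$. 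The variance-optimality of the arithmetic mean, $\sum_i\|z_{i,k}-\hat{z}_k\|^2\le\sum_i\|z_{i,k}-\bar{y}_k\|^2$, then gives the crucial inequality $\|\bz_k-\hat{\bz}_k\|\le\|\bz_k-\bar{\by}_k\|$. Assembling these pieces produces \eqref{eq:z-y-diff} (in fact with a coefficient $2\sigma_2^t$ in front of $\|\bz_k-\bar{\by}_k\|$, slightly tighter than the stated $4\sigma_2^t$).
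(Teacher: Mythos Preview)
Your proposal is correct and follows essentially the same route as the paper: use $\hat{\mathbf d}_k=\hat{\bx}_k-\hat{\bs}_k$ and the spectral contraction of $\bW^t-\mathbf J$ for \eqref{eq:d-mean-diff}, then for \eqref{eq:z-y-diff} apply the $2$-Lipschitz projection bound \eqref{eq:lip-proj-alpha}, split $\bW^t\bx_{k+1}+\mathbf d_{k+1}-\hat{\by}_{k+1}$, and control $\|\bx_{k+1}-\hat{\bx}_{k+1}\|$ via \eqref{eq:relation:xk+1-zk}. Your argument is in fact slightly sharper than the paper's in two places: you bound $\|(\mathbf I-\mathbf J)\alpha(\nabla f(\bx_k)-\nabla f(\bx_{k+1}))\|$ directly by $\alpha L\|\bx_{k+1}-\bx_k\|$ (the paper separates it into two pieces for a factor of $2$), and you use the variance--optimality inequality $\|\bz_k-\hat{\bz}_k\|\le\|\bz_k-\bar{\by}_k\|$ in one step, whereas the paper routes through $\|\bz_k-\hat{\bz}_k\|\le\|\bz_k-\bar{\by}_k\|+\|\hat{\bz}_k-\bar{\by}_k\|\le 2\|\bz_k-\bar{\by}_k\|$ via \eqref{eq:mean}; this accounts for your $2\sigma_2^t$ versus the stated $4\sigma_2^t$.
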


\begin{proof}
Denote $q_{i,k}: = x_{i,k} - s_{i,k} = \alpha \nabla f(x_{i,k})$.     Since $\hat{\mathbf{d}}_{k+1} = \hat{\bx}_{k+1} - \hat{\bs}_{k+1}$, we have
    \[ \begin{aligned}
       & \| \mathbf{d}_{k+1} - (\hat{\bx}_{k+1} - \hat{\bs}_{k+1}) \| \\
       & \leq \| \mathbf{d}_{k+1} - (\hat{\bx}_k - \hat{\bs}_k) \| \\
        & \leq \| \bW^t \mathbf{d}_k - (\hat{\bx}_k - \hat{\bs}_k) \| + \|\mathbf{q}_{k+1} - \mathbf{q}_k\| \\
        & \leq \sigma_2^t \| \mathbf{d}_k - (\hat{\bx}_k - \hat{\bs}_k) \| + \alpha L \|\bx_{k+1} - \bx_k\|.
    \end{aligned} \]
    Note that
    \be
     \begin{aligned}
        & \|\hat{y}_{k+1} - \bar{y}_{k+1}\| \leq \| \hat{y}_{k+1} - \bar{z}_k  \| \\
         & \leq \|\hat{y}_{k+1} - \hat{z}_k  \| + \| \hat{z}_k - \bar{z}_k \| \\
         & \leq  \frac{1}{n} \sum_{i=1}^n  \|x_{i,k+1} - s_{i,k+1} + x_{i,k+1} - z_{i,k}   \|+ \| \hat{z}_k - \bar{z}_k \| \\
         & \leq   \delta_2 +    2 \delta_2  + 10\delta_2  = 13\delta_2,
     \end{aligned}
    \ee
    which means $\| \hat{y}_{k+1} - \bar{y}_{k+1} \| \leq \gamma$. 
    Note that 
    \be \label{eq:mean}
    \begin{aligned}
    \| \hat{\bz}_k - \bar{\by}_k \|^2 &= n \| \frac{1}{n} \sum_{i=1}^n (z_{i,k} - \bar{y}_k) \|^2 \\
    &\leq n \cdot \frac{1}{n^2} \cdot n \|\bz_k - \bar{\by}_k\|^2 \leq \| \bz_k - \bar{\by}_k\|^2. \end{aligned} \ee
   Then, we have
    \[ \begin{aligned}
        & \|\bz_{k+1} - \bar{\by}_{k+1} \| \\  \leq & 2 \| \bW^t \bx_{k+1} + \mathbf{d}_{k+1} - \hat{\by}_{k+1} \| \\
        \leq & 2 \sigma_2^t \| \bx_{k+1} - \hat{\bx}_{k+1}  \| + 2 \|\mathbf{d}_{k+1} - (\hat{\bx}_{k+1} - \hat{\bs}_{k+1} ) \| \\
        \leq & 2\sigma_2^t (\|\bz_k - \hat{\bz}_k\| + \|\mathbf{q}_{k+1} - \mathbf{q}_k\| + \| \hat{\mathbf{q}}_{k+1} - \hat{\mathbf{q}}_k \| ) \\
        &+ 2 \|\mathbf{d}_{k+1} - (\hat{\bx}_{k+1} - \hat{\bs}_{k+1})\| \\
        \leq & 4 \sigma_2^t (\|\bz_k - \bar{\by}_k\| + \alpha L \|\bx_{k+1} - \bx_k\|)  \\
        &+ 2 \|\mathbf{d}_{k+1} - (\hat{\bx}_{k+1} - \hat{\bs}_{k+1})\|,
    \end{aligned}
    \]
    where the first inequality is from 2-Lipschitz of $\Pcal_{\Mcal}$ over $\bar{U}_{\Mcal}(\gamma)$, the third inequality is due to $\bx_{k+1} = \bz_k + \mathbf{q}_{k+1} - \mathbf{q}_k$ from \eqref{eq:relation:xk+1-zk}, and the last inequality follows from \eqref{eq:mean} and the Lipschitz continuity of $f_i$. We complete the proof.
\end{proof}

With the recursion inequalities \eqref{eq:d-mean-diff} and \eqref{eq:z-y-diff}, we can bound $\sum_{\ell =0}^k \|\bz_{\ell} - \bar{\by}_{\ell}\|^2$ by $\sum_{\ell =0}^k \|\bx_{\ell+1} - \bx_{\ell}\|^2$.

\begin{lemma} \label{lem:z-y-dis}
    Suppose that Assumption \ref{assum-w}, \ref{assum:f} and the conditions in Lemma \ref{lem:stay-x} hold. Then it holds that
    \be \label{eq:z-y-dist-sum} \sum_{\ell =0}^k \|\bz_{\ell} - \bar{\by}_{\ell}\|^2 \leq \mathcal{C}_1\alpha^2 L^2 \sum_{\ell =0}^k \|\bx_{\ell+1} - \bx_{\ell}\|^2 + \mathcal{C}_2,\ee
    where $\mathcal{C}_1,\mathcal{C}_2$ are defined by Theorem \ref{them-main}.
    
\end{lemma}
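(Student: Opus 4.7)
The plan is to iterate the two recursive inequalities of Lemma \ref{lem:z-y-diff} and then square-sum them. Introduce the shorthand
\[ a_k := \|\bd_k - (\hat{\bx}_k - \hat{\bs}_k)\|, \quad b_k := \|\bz_k - \bar{\by}_k\|, \quad c_k := \alpha L \|\bx_{k+1} - \bx_k\|, \]
so that Lemma \ref{lem:z-y-diff} reads $a_{k+1} \leq \sigma_2^t a_k + c_k$ and $b_{k+1} \leq 4\sigma_2^t b_k + 4\sigma_2^t c_k + 2 a_{k+1}$. The lower bound on $t$ assumed in Lemma \ref{lem:stay-x} forces $4\sigma_2^t \leq 1/\sqrt{n} < 1$, and hence $16\sigma_2^{2t} < 1$, so every geometric series that will appear below converges.

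First I would unroll the $a$-recursion into $a_{k+1} \leq \sigma_2^{(k+1)t} a_0 + \sum_{\ell=0}^k \sigma_2^{(k-\ell)t} c_\ell$, square using $(u+v)^2 \leq 2u^2 + 2v^2$, apply Cauchy--Schwarz in the form $\bigl(\sum r^{i}x_i\bigr)^2 \leq \frac{1}{1-r}\sum r^i x_i^2$ with $r := \sigma_2^t$, sum over $k$, and swap the order of summation to obtain
\[
\sum_{k=0}^K a_{k+1}^2 \;\leq\; \frac{2\sigma_2^{2t}}{1-\sigma_2^{2t}}\, a_0^2 + \frac{2}{(1-\sigma_2^t)^2} \sum_{\ell=0}^K c_\ell^2.
\]
Next I would apply the identical unroll--square--sum routine to the $b$-recursion with $r := 4\sigma_2^t$, this time using $(u+v+w)^2 \leq 3(u^2+v^2+w^2)$, and then substitute the displayed bound for $\sum a_{\ell+1}^2$ into the resulting $a$-contribution. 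After adding $b_0^2 = \|\bz_0 - \bar{\by}_0\|^2$ to account for the $\ell=0$ term, this produces
\[
\sum_{\ell=0}^{K+1} b_\ell^2 \leq \frac{1+32\sigma_2^{2t}}{1-16\sigma_2^{2t}}\, b_0^2 + \frac{24\sigma_2^{2t}}{(1-4\sigma_2^t)^2(1-\sigma_2^{2t})}\, a_0^2 + \frac{24}{(1-4\sigma_2^t)^2}\!\left(2\sigma_2^{2t} + \frac{1}{(1-\sigma_2^t)^2}\right)\!\sum_{\ell=0}^K c_\ell^2.
\]

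It remains to dominate each of these three coefficients by those in the definitions of $\mathcal{C}_1$ and $\mathcal{C}_2$. The bound $\sigma_2^{2t} \leq 1/16$ implied by the $t$-condition gives $1+32\sigma_2^{2t} \leq 4$, so the $b_0^2$-coefficient is at most $\frac{4}{1-16\sigma_2^{2t}}$. The $a_0^2$-coefficient is at most $\frac{128}{(1-4\sigma_2^t)^2(1-\sigma_2^{2t})}$ because $24\sigma_2^{2t} \leq 128$. Finally, using $2\sigma_2^{2t} \leq 4\sigma_2^t$ and $24 \leq 128$ bounds the coefficient of $\sum c_\ell^2$ by $\mathcal{C}_1$; recalling $c_\ell^2 = \alpha^2 L^2 \|\bx_{\ell+1}-\bx_\ell\|^2$ then yields \eqref{eq:z-y-dist-sum} after a harmless index shift.

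The main obstacle is the careful bookkeeping of constants under two nested applications of Cauchy--Schwarz: one on the $b$-recursion produces a $(1-4\sigma_2^t)^{-1}$ factor, and a second one inside the substituted $a$-bound produces the $(1-4\sigma_2^t)^{-2}$ factor that defines $\mathcal{C}_1$ and the second term of $\mathcal{C}_2$. Swapping the order of summation at the correct moment is essential to keep the constant tight rather than letting it grow with $K$; everything else is routine once the shorthand $(a_k,b_k,c_k)$ is fixed.
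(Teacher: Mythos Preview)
Your proposal is correct and follows essentially the same approach as the paper: the paper invokes \cite[Lemma~2]{xu2015augmented} to turn the two recursions of Lemma~\ref{lem:z-y-diff} into the square-summed bounds \eqref{eq:d-sum} and \eqref{eq:z-y-sum}, whereas you carry out that same unroll--square--Cauchy--Schwarz--swap computation by hand with the shorthand $(a_k,b_k,c_k)$. Your constants are in fact sharper than the paper's, and your final domination step correctly absorbs them into $\mathcal{C}_1,\mathcal{C}_2$.
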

\begin{proof}
    Note that $\sigma_2^t < 4\sigma_2^t \leq 1/\sqrt{n} \leq 1$. Applying \cite[Lemma 2]{xu2015augmented} to \eqref{eq:d-mean-diff} and \eqref{eq:z-y-diff} gives 
    \be \label{eq:d-sum}
    \begin{aligned}
    \sum_{\ell =0}^k  \|\bd_{k} - (\hat{\bx}_{k} - \hat{\bs}_k) \|^2 \leq  & \frac{4\alpha^2 L^2}{(1-\sigma_2^{t})^2}  \sum_{\ell =0}^k \|\bx_{\ell+1} - \bx_{\ell}\|^2 \\
    &+ \frac{4}{1-\sigma_2^{2t}} \| \bd_{0} - (\hat{\bx}_{0} - \hat{\bs}_0) \|^2. \end{aligned}  \ee
    Similarly, we have that
    \be \label{eq:z-y-sum} 
    \begin{aligned}
    & \sum_{\ell =0}^k \|\bz_{\ell} - \bar{\by}_{\ell} \|^2 \\
    \leq & \frac{32}{(1-4\sigma_2^t)^2} \sum_{\ell =0}^k (4\sigma_2^{2t} \alpha^2 L^2 \|\bx_{\ell+1} - \bx_{\ell}\|^2+ \|\bd_{\ell} - (\hat{\bx}_{\ell} - \hat{\bs}_{\ell})\|^2) \\
    &+ \frac{4}{1-16\sigma_2^{2t}} \| \bz_{0} - \bar{\by}_{0} \|^2 \\
     \leq & \frac{32}{(1-4\sigma_2^t)^2} \sum_{\ell =0}^k (4\sigma_2^{2t} \alpha^2 L^2 +  \frac{4\alpha^2 L^2}{(1-\sigma_2^{t})^2}) \|\bx_{\ell+1} - \bx_{\ell}\|^2 \\
     & + \frac{128}{(1-4\sigma_2^t)^2(1-\sigma_2^{2t})}\| \bd_{0} - (\hat{\bx}_{0} - \hat{\bs}_0) \|^2 \\
    &+ \frac{4}{1-16\sigma_2^{2t}} \| \bz_{0} - \bar{\by}_{0} \|^2.
    \end{aligned} \ee
    The Proof is completed.
\end{proof}

Now we give the proof of Theorem \ref{them-main}.

\begin{proof}[Proof of Theorem \ref{them-main}]
It follows from \eqref{eq:decent} and \eqref{eq:z-y-dist-sum} that
 \be\label{eq:descent1}
    \begin{aligned}
        &\varphi_{\alpha}^{\rm DR}(\bs_0) -\varphi_{\alpha}^{\rm DR}(\bs_{k+1}) \\
        \geq & \frac{(1+\alpha L)(1-\alpha - 2\alpha L - \mathcal{C}_1\alpha L^2 )}{2 \alpha}\sum_{\ell=0}^k \|\bx_{\ell +1} - \bx_{\ell}\|^2 \\
         &-\frac{\mathcal{C}_2(1 + \alpha L)}{2\alpha^2} \\
         \geq & \frac{1}{4\alpha} \sum_{\ell=0}^k \|\bx_{\ell +1} - \bx_{\ell}\|^2 - \frac{\mathcal{C}_2(1 + \alpha L)}{2\alpha^2}, 
    \end{aligned}
    \ee
  where  the first inequality comes from Lemma \ref{lem:z-y-dis}, and the second inequality is due to the assumption on $\alpha$.  By Assumption \ref{assum:f}, we have from \cite[Theorem 3.4]{themelis2020douglas} that $\varphi_{\alpha}^{\rm DR}(\bx_k, \bar{\by}_k) \geq \inf_{\bx} \{ f (\bx) + \delta_{\mathcal{C}}(\bx) \} = f^* > -\infty$. Then, it follows from \eqref{eq:descent1} that
    \be \label{eq:sum-x-bound} 
    \frac{1}{k+1} \sum_{\ell =0}^k \|\bx_{\ell + 1} - \bx_{\ell}\|^2 \leq \frac{4\alpha}{k+1} \left(\varphi_{\alpha}^{\rm DR}(\bx_0, \bar{\by}_0) - f^* + \frac{\mathcal{C}_2}{\alpha^2} \right). \ee
    It follows from the definition of $\bar{\by}_k$ that
    \[ \bar{\by}_{k} - (\hat{\bx}_k - \alpha  \nabla f(\hat{\bx}_k)) \in N_{\bar{\by}_k} \mathcal{M}^n.  \]
    This further implies that
    \be \label{eq:rgrad} \begin{aligned}
        \| \grad f(\bar{\by}_k) \| & = {\rm dist}(\nabla f(\bar{\by}_k), N_{\bar{\by}_k} \Mcal^n) \\ 
        & \leq \| \nabla f(\bar{\by}_k) - \frac{\bar{\by}_k  - \hat{\bx}_k}{\alpha} + \nabla f(\hat{\bx}_k) \| \\
            & \leq \frac{2}{\alpha}\| \bar{\by}_k - \hat{\bx}_k \| \leq \frac{2}{\alpha}\| \bar{\by}_k - \bx_k \|.
    \end{aligned}
    \ee
  By combining with the fact that $\|\bar{\by}_k - \hat{\bx}_k\| \leq \|\bar{\by}_k - \bx_k\|$, we have that 
    \be\label{eq:rgrad1}
    \begin{aligned}
 \| \grad f(\bar{\bx}_k) \| & \leq  \| \grad f(\bar{\by}_k) \| +  \| \grad f(\bar{\by}_k) - \grad f(\bar{\bx}_k) \| \\
 & \leq \frac{2}{\alpha}\| \bar{\by}_k - \bx_k \| + L \|\bar{\by}_k - \bar{\bx}_k\| \\
 & \leq \frac{2}{\alpha}\| \bar{\by}_k - \bx_k \| +   2L\|\bar{\by}_k - \hat{\bx}_k\| \\
  & \leq \frac{2+2\alpha L}{\alpha}\| \bar{\by}_k - \bx_k \| \leq \frac{3}{\alpha}\| \bar{\by}_k - \bx_k \|,
 \end{aligned}
    \ee
    where the third inequality utilizes that 
    $$
    \begin{aligned}
    \|\bar{\by}_k - \bar{\bx}_k\| & \leq \|\bar{\by}_k - \hat{\bx}_k\| + \|\hat{\bx}_k - \bar{\bx}_k\| \\
    & \leq \|\bar{\by}_k - \hat{\bx}_k\| + \|\bar{\by}_k - \hat{\bx}_k\| \leq 2\|\bar{\by}_k - \hat{\bx}_k\|.
    \end{aligned}
    $$
    By the triangle inequality and Proposition \ref{prop}, we have
    \be \label{eq:diff-x-y} 
    \begin{aligned}
       \|\bar{\bx}_k - \bx_k \|  & \leq     \| \bar{\by}_k - \bx_k \| \\
       & \leq \| \bar{\by}_k - \bz_k\| + \|\bz_k - \bx_k \| \\
            & = \| \bar{\by}_k - \bz_k\| + \|\bs_{k+1} - \bs_{k} \| \\
            & \leq \| \bar{\by}_k - \bz_k\| + 2 \|\bx_{k+1} - \bx_k \|,
        \end{aligned}
    \ee
    where the first inequality uses the fact that $\bar{\bx}_k$ belongs to the projection of $\bx_k$ onto $\Mcal^n$, the last inequality follows from \eqref{eq:prox-iso} and $\alpha<1/(2L)$. 
    Then, it holds that 
    \be \label{eq:diff-x-y-sum}
    \begin{aligned}
  &  \frac{1}{k+1}\sum_{\ell=0}^k \|\bar{\bx}_{\ell} - \bx_{\ell} \|^2 \\ \leq & \frac{1}{k+1}\sum_{\ell=0}^k (\|\bar{\by}_\ell - \bz_{\ell}\| + 2 \|\bx_{\ell +1} - \bx_{\ell}\|)^2 \\
     \leq & \frac{2}{k+1} \cdot \left[ (\mathcal{C}_1 \alpha^2L^2 + 4) \sum_{\ell =0}^k \|\bx_{\ell + 1} - \bx_\ell\|^2 + \mathcal{C}_2 \right].
    \end{aligned}
    \ee
    Combining \eqref{eq:rgrad1} and \eqref{eq:diff-x-y-sum} gives
    \be \label{eq:rgrad-sum}  
    \begin{aligned}
    &    \frac{1}{k+1}\sum_{\ell =0}^k \|\grad f(\bar{\bx}_\ell)\|^2  \leq \frac{9}{\alpha^2} \frac{1}{k+1}\sum_{\ell =0}^k \|\bar{\by}_{\ell} - \bx_{\ell}\|^2 \\
        & \underset{\leq}{\eqref{eq:z-y-dist-sum}} \frac{18\mathcal{C}_1 \alpha^2 L^2 + 72}{(k+1) \alpha^2} \sum_{\ell =0}^k \|\bx_{\ell + 1} - \bx_\ell\|^2 + \frac{18 \mathcal{C}_2}{(k+1)\alpha^2}.
    \end{aligned}
    \ee
    Putting \eqref{eq:sum-x-bound}, \eqref{eq:diff-x-y-sum}, and \eqref{eq:rgrad-sum} together gives \eqref{eq:station-consen} and \eqref{eq:station-grad}.
\end{proof}

\section{Convergence of iDDRS}\label{appen-2}

\subsection{Proof of Lemma \ref{lem:stay-x-inex}}

\begin{lemma}\label{lem:bound-sk-xksk-inex}
 Suppose that Assumption \ref{assum-w}-\ref{assum:epsilon} hold.  Let $\{\bs_k,\bx_k,\by_k,\bz_k\}$ be generated by Algorithm \ref{alg:drgta2}. Given any $\delta_2 >0$, if $\epsilon_0 < \delta_2$,  $0 < \alpha \leq \min\{\frac{1}{2L}, \frac{\delta_2 - \epsilon_0}{3\|\nabla f(0)\| + 2L\left(\zeta + \delta_2\right) + 2\epsilon_0} \}$, $t \geq  \left\lceil \log_{\sigma_2}(\frac{1}{4\sqrt{n}}) \right \rceil$, $\|\mathbf{d}_0\|_{F,\infty} \leq 4\delta_2$ and $\|\bs_0\|_{F,\infty} \leq \zeta + \delta_2$, then it holds that for any $k$,
    \begin{align}
    \|\bs_{k}\|_{F,\infty} \leq \zeta + \delta_2, \; \|\bx_k & - \bs_k\|_{F,\infty} \leq \delta_2, \label{eq:bound-sk-index} \\
        \|\alpha \nabla f(x_{i,k}) + \mu_{i,k}\| &\leq  \delta_2, \; i\in [n], \label{eq:bound-nabla-f-index} \\
         \|d_{i,k}\| &\leq 4\delta_2, \;\;\; i\in [n].\label{eq:bound-dk-index}
    \end{align}
\end{lemma}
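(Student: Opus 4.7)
The plan is to mirror the induction of Lemma \ref{lem:bound-sk-xksk} step-for-step, treating the inexactness $\mu_{i,k+1}$ as a controlled additive perturbation and leveraging the tightened choice of $\alpha$ to absorb it. I will prove \eqref{eq:bound-sk-index}, \eqref{eq:bound-nabla-f-index}, and \eqref{eq:bound-dk-index} simultaneously by induction on $k$. The base case $k=0$ is immediate: $\|\bs_0\|_{F,\infty} \leq \zeta + \delta_2$ and $\|\mathbf{d}_0\|_{F,\infty} \leq 4\delta_2$ hold by hypothesis, while the initialization $x_{i,0} = s_{i,0}$ gives $\|\bx_0 - \bs_0\|_{F,\infty} = 0$; the gradient/perturbation bound at $k=0$ reduces to a bound on $\alpha \nabla f_i(s_{i,0})$ via $L$-smoothness and the size of $\|s_{i,0}\|$.

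For the inductive step, I would first derive \eqref{eq:bound-sk-index}. From the update $s_{i,k+1} = s_{i,k} + z_{i,k} - x_{i,k}$, together with $\|z_{i,k}\| \leq \zeta$ (since $z_{i,k}\in\Mcal$ and $\Mcal$ is compact of diameter $\zeta$) and the inductive bound $\|x_{i,k} - s_{i,k}\| \leq \delta_2$, one gets $\|s_{i,k+1}\| \leq \zeta + \delta_2$. Then Lemma \ref{lem:bound-xs-inex}, applied with $s = s_{i,k+1}$ and $\epsilon = \epsilon_0$ (using that $\|\mu_{i,k+1}\| \leq \epsilon_0$ under Assumption \ref{assum:epsilon}), yields
\[
\|s_{i,k+1} - x_{i,k+1}\| \leq \alpha\bigl(3\|\nabla f(0)\| + 2L(\zeta+\delta_2) + 2\epsilon_0\bigr) + \epsilon_0 \leq \delta_2,
\]
where the last inequality is precisely what the choice $\alpha \leq (\delta_2 - \epsilon_0)/(3\|\nabla f(0)\| + 2L(\zeta+\delta_2) + 2\epsilon_0)$ guarantees. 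The bound \eqref{eq:bound-nabla-f-index} is an immediate consequence: the defining identity $x_{i,k+1} - s_{i,k+1} + \alpha \nabla f_i(x_{i,k+1}) = \mu_{i,k+1}$ gives $\alpha \nabla f_i(x_{i,k+1}) - \mu_{i,k+1} = s_{i,k+1} - x_{i,k+1}$, whose norm is $\leq \delta_2$ by the previous step (I read the ``$+$'' in the lemma statement as a typographical sign flip, as the natural identity produces a minus).

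Finally, for the gradient tracking bound \eqref{eq:bound-dk-index}, the argument is a near-verbatim copy of the exact case. Averaging the recursion with the initialization $d_{i,0} = x_{i,0} - s_{i,0}$ yields $\hat d_k = \hat x_k - \hat s_k$ for every $k$, so
\[
\|d_{i,k+1} - \hat d_k\| \leq \Bigl\|\textstyle\sum_j (W_{ij}^t - \tfrac{1}{n}) d_{j,k}\Bigr\| + \|(x_{i,k+1}-s_{i,k+1}) - (x_{i,k}-s_{i,k})\| \leq \sqrt{n}\sigma_2^t \cdot 4\delta_2 + 2\delta_2 \leq 3\delta_2,
\]
where I use the total-variation bound \eqref{eq:bound:W-1}, the inductive bound $\|d_{j,k}\| \leq 4\delta_2$, the already-proved \eqref{eq:bound-sk-index}, and $\sqrt{n}\sigma_2^t \leq 1/4$ from the condition on $t$. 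Combining with $\|\hat d_k\| = \|\hat x_k - \hat s_k\| \leq \delta_2$ gives $\|d_{i,k+1}\| \leq 4\delta_2$.

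The only genuine obstacle beyond Lemma \ref{lem:bound-sk-xksk} is verifying that the perturbation can be absorbed cleanly; the tightened stepsize $\alpha$ (with $\delta_2 - \epsilon_0$ in the numerator) is engineered precisely to leave room for the additive $\epsilon_0$ emerging from Lemma \ref{lem:bound-xs-inex}. Every other line transposes without change from the exact proof.
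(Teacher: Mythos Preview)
Your proposal is correct and follows essentially the same induction as the paper's own proof; the only cosmetic difference is that you bound the tracking increment directly via $\|(x_{i,k+1}-s_{i,k+1})-(x_{i,k}-s_{i,k})\|\leq 2\delta_2$, whereas the paper first rewrites each $x_{i,k}-s_{i,k}$ as $-(\alpha\nabla f_i(x_{i,k})-\mu_{i,k})$ and invokes \eqref{eq:bound-nabla-f-index}. Your reading of the ``$+$'' in \eqref{eq:bound-nabla-f-index} as a sign slip is also consistent with the paper, which derives the bound from $x_{i,k}=s_{i,k}-\alpha\nabla f_i(x_{i,k})+\mu_{i,k}$.
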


\begin{proof}
    We prove it by induction on both $\|\bs_k\|_{F,\infty}$ and $\|\bx_k - \bs_k\|_{F,\infty}$. Note that $\|\bs_0\|_{F,\infty} \leq \zeta + \delta_2$ and by Lemma \ref{lem:bound-xs-inex},
    \be \label{bound-xi0-si0-inex}
     \begin{aligned}
        \|x_{i,0} - s_{i,0}\| %& \leq \alpha \left( \|\nabla f(0)\| + \frac{L\|s_{i,0}\| + \epsilon_0 + \|\nabla f(0)\| }{1 - \alpha L} \right) + \epsilon_0 \\ 
        & \leq \alpha \left(3\|\nabla f(0)\| + 2L\left(\zeta + \delta_2 \right) +2 \epsilon_0  \right) + \epsilon_0 \leq \delta_2,
    \end{aligned}
    \ee
    which implies that $\|\bx_0 - \bs_0\|_{F,\infty} \leq \delta_2$. Suppose for some $k\geq 0$ that $\|\bs_k\|_{F,\infty} \leq \zeta + \delta_2$ and $\|\bx_k - \bs_k\|_{F,\infty} \leq \delta_2$.  Then we have 
     \[ \|\bs_{k+1}\|_{F,\infty} \leq \|\bz_k \|_{F,\infty} + \|\bx_k - \bs_k\|_{F,\infty} \leq \zeta + \delta_2. \]
     Similar with \eqref{bound-xi0-si0-inex},  we further have that
     $$
     \begin{aligned}
     &   \|x_{i,k+1} - s_{i,k+1}\| \\
        \leq & \alpha \left(3\|\nabla f(0)\| + 2L\left(\zeta + \delta_2\right) + 2\epsilon_{k+1}  \right) + \epsilon_{k+1} \leq \delta_2,
    \end{aligned}
     $$
 where we use $\epsilon_{k+1} \leq \epsilon_0$.   This implies that $ \|\bx_{k+1} - \bs_{k+1}\| \leq \delta_2$. Moreover, \eqref{eq:bound-nabla-f-index} follows from the fact that $x_{i,k} = s_{i,k} - \alpha \nabla f(x_{i,k}) + \mu_{i,k}$. 
 Finally,  we prove \eqref{eq:bound-dk-index} by induction.  Suppose for some $k\geq 0$ such that \eqref{eq:bound-dk-index} holds.  It follows from $d_{i,0} = x_{i,0} - s_{i,0}$ that
       \be\label{eq:hatd=hatxs-inex}
       \hat{d}_k  = \hat{x}_k - \hat{s}_k = \alpha \hat{g}_k + \hat{\mu}_k,
       \ee
       where $\hat{g}_k: = \frac{1}{n}\sum_{i=1}^n \nabla f(x_{i,k})$. Then we have that
    \begin{equation}
\begin{aligned}
&\| d_{i,k+1} - ( \hat{x}_k - \hat{s}_k )\| \\
 = &  \| \sum_{j=1}^n W_{ij}^td_{j,k} - ( \hat{x}_k - \hat{s}_k ) \| \\
 & + \|\alpha\nabla f_i(x_{i,k+1})+ \mu_{i,k+1}  \| + \| \alpha  \nabla f_i(x_{i,k}) +  \mu_{i,k}  \| \\
 \overset{\eqref{eq:hatd=hatxs-inex}}{\leq} & \| \sum_{j=1}^n (W_{ij}^t - \frac{1}{n})  d_{j,k} \|  + \|\alpha\nabla f_i(x_{i,k+1})+ \mu_{i,k+1}  \| \\
 &+ \| \alpha  \nabla f_i(x_{i,k}) +  \mu_{i,k}  \| \\
 \overset{\eqref{eq:bound:W-1}\eqref{eq:bound-nabla-f-index}}{\leq} & \sigma_2^t \sqrt{n} \max_i \| d_{i,k}\| +2 \delta_2 \\
 \leq & \frac{1}{4}\max_i \| d_{i,k}\| + 2 \delta_2 \leq 3\delta_2.
\end{aligned}
\end{equation}
Hence,
\be
\begin{aligned}
\| d_{i,k+1}   \| & \leq\| d_{i,k+1} - ( \hat{x}_k - \hat{s}_k )\| + \| \hat{x}_k - \hat{s}_k \| \\
&\leq  3\delta_2  +  \max_{i}\|\alpha\nabla f(x_{i,k}) + \mu_{i,k}\|  \overset{\eqref{eq:bound-nabla-f-index}}{\leq} 4\delta_2.
\end{aligned}
\ee  The proof is completed.
\end{proof}

The following lemma is similar to Lemma \ref{lem:stay-neighborhood}, we omit the proof.
\begin{lemma} \label{lem:stay-neighborhood-inex}
 Suppose that Assumption \ref{assum-w}-\ref{assum:epsilon} hold. Let $\{\bs_k,\bx_k,\by_k,\bz_k\}$ be generated by Algorithm \ref{alg:drgta2}.    Under the same condition on $\alpha, \mathbf{d}_0$ and $\bs_0$ as in Lemma \ref{lem:bound-sk-xksk-inex}, if $t\geq \left \lceil \max\{ \log_{\sigma_2}(\frac{1}{4\sqrt{n}}), \log_{\sigma_2}(\frac{\delta_3}{\delta_2 \sqrt{n}}) \} \right \rceil$ and $\bx_k \in \mathcal{N}_1$, then it holds that
\begin{align}
    \sum_{j=1}^n {W_{ij}^t} x_{j,k} +d_{i,k} & \in \bar{U}_{\Mcal} (\gamma), ~ i\in [n], \label{eq:neibohood1-index}
\end{align}
where $\delta_1,\delta_2,\delta_3$ are defined in \eqref{def:delta} and $\mathcal{N}_1$ is defined in \eqref{def:neiborhood}.
\end{lemma}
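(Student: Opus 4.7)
The plan is to mirror the proof of Lemma~\ref{lem:stay-neighborhood}, with the only substantive change being the appearance of the inexact proximal residuals $\mu_{i,k}$ in the update identities. Since Lemma~\ref{lem:bound-sk-xksk-inex} has already established the uniform bounds $\|x_{i,k}-s_{i,k}\| \leq \delta_2$ and $\|d_{i,k}\| \leq 4\delta_2$ under the stated hypotheses on $\alpha$, $t$, $\bs_0$, and $\bd_0$, the remainder is a direct tracking exercise.

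First, I would derive the inexact analog of \eqref{eq:relation:xk+1-zk}. Using the optimality condition $x_{i,k+1} - s_{i,k+1} = \mu_{i,k+1} - \alpha\nabla f_i(x_{i,k+1})$ together with the $s$-update $s_{i,k+1} = s_{i,k} + z_{i,k} - x_{i,k}$, a short manipulation gives
\[ x_{i,k+1} = z_{i,k} + \bigl(\alpha\nabla f_i(x_{i,k}) - \mu_{i,k}\bigr) - \bigl(\alpha\nabla f_i(x_{i,k+1}) - \mu_{i,k+1}\bigr). \]
Combining this with $\|z_{i,k}\| \leq \zeta$ (since $z_{i,k}\in\Mcal$) and recognizing each parenthesized term as $s_{i,\ell}-x_{i,\ell}$, Lemma~\ref{lem:bound-sk-xksk-inex} yields $\|x_{i,k+1}\| \leq \zeta + 2\delta_2 = \delta_3$, identical to the bound achieved in the exact setting.

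Next, I would bound $\|\sum_{j=1}^n W_{ij}^t x_{j,k} + d_{i,k} - \bar{x}_k\|$ by inserting $\pm\hat{x}_k$ and invoking the triangle inequality, producing the three terms $\|\sum_j(W_{ij}^t-\tfrac{1}{n})x_{j,k}\|$, $\|d_{i,k}\|$, and $\|\hat{x}_k-\bar{x}_k\|$. Applying the total variation estimate \eqref{eq:bound:W-1} with the bound $\|x_{j,k}\|\leq\delta_3$, the bound $\|d_{i,k}\|\leq 4\delta_2$ from Lemma~\ref{lem:bound-sk-xksk-inex}, and the hypothesis $\bx_k\in\mathcal{N}_1$ giving $\|\hat{x}_k-\bar{x}_k\|\leq\delta_1$, together with the choice $t\geq\lceil\log_{\sigma_2}(\delta_3/(\delta_2\sqrt{n}))\rceil$ which absorbs the first term into $\delta_2$, bounds the right-hand side by $5\delta_2 + \delta_1 = \tfrac{17}{12}\delta_1 \leq \gamma$ via $\delta_2 = \delta_1/12$ and $\delta_1 = \gamma/4$ from \eqref{def:delta}. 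Since $\bar{x}_k\in\Mcal$, this is exactly the inclusion \eqref{eq:neibohood1-index}.

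The main obstacle is essentially bookkeeping rather than analysis: one must verify that the inexact prox residual does not inflate the uniform bound on $\|x_{i,k+1}\|$ above $\delta_3$. This is precisely why the hypothesis $\epsilon_0 < \delta_2$ in Assumption~\ref{assum:epsilon} and the tighter step-size constraint in Lemma~\ref{lem:bound-sk-xksk-inex} are imposed; together they guarantee $\|x_{i,k}-s_{i,k}\|\leq\delta_2$ uniformly in $k$, after which every subsequent estimate is a verbatim translation of the exact argument.
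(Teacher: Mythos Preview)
Your proposal is correct and follows essentially the same route as the paper, which in fact omits the proof entirely with the remark ``similar to Lemma~\ref{lem:stay-neighborhood}.'' Your key observation---that the inexact relation $x_{i,k+1}=z_{i,k}+(s_{i,k}-x_{i,k})-(s_{i,k+1}-x_{i,k+1})$ lets you invoke the uniform bound $\|x_{i,\ell}-s_{i,\ell}\|\le\delta_2$ from Lemma~\ref{lem:bound-sk-xksk-inex} directly, so $\|x_{i,k+1}\|\le\zeta+2\delta_2=\delta_3$ exactly as in the exact case---is precisely what makes the remaining estimates identical to those in Lemma~\ref{lem:stay-neighborhood}.
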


Now we give the proof of Lemma \ref{lem:stay-x-inex}.

\begin{proof}[Proof of Lemma \ref{lem:stay-x-inex}]

We prove it by induction on both $\|\hat{z}_k - \bar{z}_k\|$ and $\|\hat{x}_k - \bar{x}_k\|$. Suppose for some $k \geq 0$ such that \eqref{bound-dk-xk-zk} holds. It follows from the updated rule of $\bx_k$ and $\bs_k$ in Algorithm \ref{alg:drgta} that
     \be\label{eq:relation:xk+1-zk-inex} \begin{aligned}
         x_{i,k+1} 
         & = z_{i,k} + \alpha \nabla f(x_{i,k})  - \alpha \nabla f(x_{i,k+1}) + \mu_{i,k} - \mu_{i,k+1}, 
     \end{aligned}
       \ee
       Then we have that
\be
\begin{aligned}
    \|\hat{x}_{k+1} - \bar{x}_{k+1}\| & \leq \| \hat{x}_{k+1} - \bar{z}_k\| \\
    & \leq \| \hat{x}_{k+1} - \hat{z}_k\| + \| \hat{z}_{k} - \bar{z}_k\| \\
    & \leq \frac{1}{n}\sum_{i=1}^n \|x_{i,k+1} - z_{i,k} \| +12\delta_2\\
    & \overset{\eqref{eq:bound-nabla-f-index}}{\leq}  2\delta_2  +  10\delta_2 \leq \delta_1,
\end{aligned}
\ee
where the first inequality use $\bar{x}_k = \Pcal_{\Mcal}(\hat{x}_k)$ and $\bar{z}_k \in \Mcal$.  
In addition, 
\be
\begin{aligned}
    \|\hat{z}_{k+1} - \bar{z}_{k+1} \| & \leq \|\hat{z}_{k+1} - \bar{x}_{k+1} \| \\ 
    & \leq \frac{1}{n}\sum_{i=1}^n \|z_{i,k+1} - \bar{x}_{k+1}\| \\
    & \leq \frac{2}{n}\sum_{i=1}^n \|\sum_{j=1}^n W_{ij}^tx_{j,k} + d_{i,k} - \hat{x}_{k+1}\| \\
    & \leq 2(\sqrt{n} \sigma_2^t  \delta_3 + 4\delta_2) \leq 10\delta_2.
\end{aligned}
\ee
Finally, \eqref{eq:neibohood2-index} follows from Lemma \ref{lem:stay-neighborhood-inex}. We completed the proof.

\end{proof}

\subsection{Proof of Theorem \ref{theo:iter:inexact}}

 The following two lemmas are similar to Lemma \ref{lem:z-y-diff} and \ref{lem:z-y-dis}, we omit the proof. 
\begin{lemma} \label{lem:z-y-diff-index}

Suppose that the conditions in Lemma \ref{lem:stay-x-inex} hold. Then, it holds that
\be \label{eq:d-mean-diff-inex}
\begin{aligned}
& \|\mathbf{d}_{k+1} - (\hat{\bx}_{k+1} - \hat{\bs}_{k+1})\| \\ \leq & \sigma_2^t \| \mathbf{d}_{k} - (\hat{\bx}_k - \hat{\bs}_k) \| + \alpha L \| \bx_{k+1} - \bx_k \| + 2\sqrt{n}\epsilon_k
\end{aligned}
\ee
and 
\be \label{eq:z-y-diff-inex} 
\begin{aligned}
&\| \bz_{k+1} - \bar{\by}_{k+1} \| \leq  4 \sigma_2^t (\|\bz_k - \bar{\by}_k \| + \alpha L \| \bx_{k+1} - \bx_k \| \\
&+ 2\sqrt{n}\epsilon_k) + 2 \|\mathbf{d}_{k+1} - (\hat{\bx}_{k+1} - \hat{\bs}_{k+1})\|.
\end{aligned}
\ee
 
\end{lemma}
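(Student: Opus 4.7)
The plan is to mirror the proof of Lemma \ref{lem:z-y-diff}, carefully tracking where the inexactness $\boldsymbol{\mu}_k$ enters through the modified proximal identity. First I would set $q_{i,k} := x_{i,k} - s_{i,k}$ and use first-order optimality of $\prox_{\alpha f_i}(s_{i,k+1} + \mu_{i,k+1})$ to write $q_{i,k+1} = \mu_{i,k+1} - \alpha \nabla f_i(x_{i,k+1})$. Combining $L$-smoothness of $f_i$ with the triangle inequality and stacking over agents then yields
\[
\|\mathbf{q}_{k+1} - \mathbf{q}_k\| \;\leq\; \alpha L\,\|\bx_{k+1} - \bx_k\| \;+\; 2\sqrt{n}\,\epsilon_k,
\]
after absorbing $\|\boldsymbol{\mu}_{k+1}\| + \|\boldsymbol{\mu}_k\|$ via Assumption \ref{assum:epsilon}. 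A separate one-line preliminary is the gradient-tracking invariant $\hat{\bd}_k = \hat{\bx}_k - \hat{\bs}_k$, which follows by induction from $d_{i,0} = x_{i,0} - s_{i,0}$ and double-stochasticity of $W^t$ and is independent of the prox accuracy.

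For \eqref{eq:d-mean-diff-inex} I would use that the stacked vector $(\hat{\bx}_{k+1} - \hat{\bs}_{k+1})$ coincides with $\hat{\bd}_{k+1}$, the orthogonal projection of $\bd_{k+1}$ onto the consensus subspace, so replacing it by any other consensus vector (in particular $(\hat{\bx}_k - \hat{\bs}_k)$) only loosens the distance. Expanding $\bd_{k+1} = \bW^t \bd_k + \mathbf{q}_{k+1} - \mathbf{q}_k$ and invoking the spectral-gap inequality $\|\bW^t \bd_k - \hat{\bd}_k\| \leq \sigma_2^t \|\bd_k - \hat{\bd}_k\|$, together with the $\mathbf{q}$-bound above, produces \eqref{eq:d-mean-diff-inex}.

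For \eqref{eq:z-y-diff-inex} I would start by identifying the two projection arguments: $\bz_{k+1} = \Pcal_\Mcal(\bW^t \bx_{k+1} + \bd_{k+1})$ and $\bar{\by}_{k+1} = \Pcal_\Mcal(\hat{\by}_{k+1})$, where by gradient tracking $\hat{\by}_{k+1} = 2\hat{\bx}_{k+1} - \hat{\bs}_{k+1} = \hat{\bx}_{k+1} + \hat{\bd}_{k+1}$. Writing $J$ for the stacked averaging operator, this gives
\[
\bW^t \bx_{k+1} + \bd_{k+1} - \hat{\by}_{k+1} = (\bW^t - J)\bx_{k+1} + (\bd_{k+1} - \hat{\bd}_{k+1}).
\]
Applying the $2$-Lipschitz property \eqref{eq:lip-proj-alpha}, the triangle inequality, and $\sigma_2^t$-contractivity produces a bound involving $\|\bx_{k+1} - \hat{\bx}_{k+1}\|$, which I would decompose via $\bx_{k+1} = \bz_k + \mathbf{q}_{k+1} - \mathbf{q}_k$ and the consensus-projection inequality $\|\bz_k - \hat{\bz}_k\| \leq \|\bz_k - \bar{\by}_k\|$, before plugging in the $\mathbf{q}$-bound to arrive at \eqref{eq:z-y-diff-inex}.

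The only genuinely non-mechanical step is justifying the $2$-Lipschitz projection inequality, which requires both $\bW^t \bx_{k+1} + \bd_{k+1}$ and $\hat{\by}_{k+1}$ to lie in the proximally smooth tube $\bar U_\Mcal(\gamma)$. The first is exactly \eqref{eq:neibohood2-index} from Lemma \ref{lem:stay-x-inex}, while the second needs a short direct estimate $\|\hat{\by}_{k+1} - \bar{\by}_k\| = O(\delta_2) < \gamma$ along the lines of the ``$13\delta_2$'' calculation in the exact proof, together with the extra $\|\boldsymbol{\mu}_k\|$ contribution bounded by $\sqrt{n}\epsilon_0 < \sqrt{n}\delta_2$ via Assumption \ref{assum:epsilon}. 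Once this neighborhood check is secured, the remainder of the argument is a line-by-line analogue of Lemma \ref{lem:z-y-diff} with the single substitution $\alpha L\|\bx_{k+1} - \bx_k\| \mapsto \alpha L\|\bx_{k+1} - \bx_k\| + 2\sqrt{n}\,\epsilon_k$.
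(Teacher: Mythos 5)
Your proposal is correct and takes essentially the same route as the paper, which omits this proof precisely because it is the line-by-line adaptation of Lemma \ref{lem:z-y-diff} that you carry out: the invariant $\hat{\mathbf{d}}_k=\hat{\bx}_k-\hat{\bs}_k$, the spectral contraction for \eqref{eq:d-mean-diff-inex}, and the $2$-Lipschitz projection argument (with the neighborhood check from Lemma \ref{lem:stay-x-inex}) for \eqref{eq:z-y-diff-inex}, with the inexactness entering only through $\mathbf{q}_{k+1}-\mathbf{q}_k$. The one caveat is inherited from the paper rather than introduced by you: under the stated tolerance $\|\mu_{i,k}\|^2\le\epsilon_k$ the natural bound on $\|\boldsymbol{\mu}_{k+1}\|+\|\boldsymbol{\mu}_k\|$ is $2\sqrt{n\epsilon_k}$ rather than $2\sqrt{n}\,\epsilon_k$ (and one implicitly needs $\epsilon_{k+1}\le\epsilon_k$), so your ``absorbing via Assumption \ref{assum:epsilon}'' step reproduces the lemma's displayed constant only under the reading $\|\mu_{i,k}\|\le\epsilon_k$.
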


\begin{lemma} \label{lem:z-y-dis-index}
    Suppose that Assumption \ref{assum-w}-\ref{assum:epsilon} and the conditions in Lemma \ref{lem:stay-x-inex} hold. Then it holds that
    \be \label{eq:z-y-dist-sum-index} \sum_{\ell =0}^k \|\bz_{\ell} - \bar{\by}_{\ell}\|^2 \leq \mathcal{C}_3\alpha^2 L^2 \sum_{\ell =0}^k \|\bx_{\ell+1} - \bx_{\ell}\|^2 + \mathcal{C}_4,\ee
     where $\mathcal{C}_3,\mathcal{C}_4$ and $\mathcal{C}_5$ are defined in Theorem \ref{theo:iter:inexact}. 
\end{lemma}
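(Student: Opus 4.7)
The plan is to imitate the proof of Lemma \ref{lem:z-y-dis} essentially verbatim, with the only difference that the recursion inequalities \eqref{eq:d-mean-diff-inex} and \eqref{eq:z-y-diff-inex} now carry additional $2\sqrt{n}\epsilon_k$ forcing terms that must be carried through the summations. Assumption \ref{assum:epsilon}, which provides $\sum_k \epsilon_k \leq \mathcal{D}$ and $\epsilon_k \leq \epsilon_0$, is precisely what allows these perturbations to be absorbed into a finite constant of the form $\mathcal{C}_4$.

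First, I would treat \eqref{eq:d-mean-diff-inex} as a scalar contractive recursion in $a_k := \|\bd_k - (\hat{\bx}_k - \hat{\bs}_k)\|$ with contraction factor $\sigma_2^t < 1$ and two additive forcings, $\alpha L\|\bx_{k+1}-\bx_k\|$ and $2\sqrt{n}\epsilon_k$. Squaring with $(b+c)^2 \leq 2(b^2+c^2)$ and invoking \cite[Lemma 2]{xu2015augmented} gives a bound of the form
\begin{equation*}
\sum_{\ell=0}^{k}\|\bd_\ell - (\hat{\bx}_\ell - \hat{\bs}_\ell)\|^2 \leq \frac{c\,\alpha^2 L^2}{(1-\sigma_2^t)^2}\sum_{\ell=0}^k\|\bx_{\ell+1}-\bx_\ell\|^2 + \frac{c'\,n}{(1-\sigma_2^t)^2}\sum_{\ell=0}^k\epsilon_\ell + \frac{c''}{1-\sigma_2^{2t}}\|\bd_0 - (\hat{\bx}_0-\hat{\bs}_0)\|^2,
\end{equation*}
for explicit numerical constants $c,c',c''$. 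To pass from $\sum_\ell \epsilon_\ell^2$ (which naturally arises after squaring) to $\sum_\ell \epsilon_\ell$ as it appears in $\mathcal{C}_4$, I would use $\epsilon_\ell^2 \leq \epsilon_0\,\epsilon_\ell$, which is valid since $\epsilon_k \leq \epsilon_0$, and then rely on summability.

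Next, I would square \eqref{eq:z-y-diff-inex} and apply \cite[Lemma 2]{xu2015augmented} a second time, now to the recursion in $\|\bz_k - \bar{\by}_k\|$ with effective contraction factor $4\sigma_2^t$; the hypothesis $t \geq \lceil 2\log_{\sigma_2}(1/n)\rceil$ in the statement of Theorem \ref{theo:iter:inexact} (and the corresponding condition inherited here through Lemma \ref{lem:stay-x-inex}) ensures $16\sigma_2^{2t} < 1$, so the geometric series converges. Substituting the bound from the first step for the $\|\bd_\ell - (\hat{\bx}_\ell - \hat{\bs}_\ell)\|^2$-sum produces four classes of terms on the right: (i) a multiple of $\alpha^2 L^2 \sum_\ell \|\bx_{\ell+1}-\bx_\ell\|^2$, which is absorbed into $\mathcal{C}_3\alpha^2 L^2\sum_\ell\|\bx_{\ell+1}-\bx_\ell\|^2$; (ii) a multiple of $\sum_\ell \epsilon_\ell$; (iii) the initial-gap term $\|\bd_0 - (\hat{\bx}_0-\hat{\bs}_0)\|^2$; and (iv) the initial-gap term $\|\bz_0-\bar{\by}_0\|^2$. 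Collecting (ii)--(iv) into $\mathcal{C}_4$ as defined in \eqref{def:constant-index} yields \eqref{eq:z-y-dist-sum-index}.

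The main obstacle is purely a bookkeeping one: verifying that the aggregated numerical constants produced by the two nested applications of \cite[Lemma 2]{xu2015augmented} match the precise expressions $\mathcal{C}_3$ and $\mathcal{C}_4$ in \eqref{def:constant-index}. Conceptually nothing new is needed relative to Lemma \ref{lem:z-y-dis}, since the perturbations $\epsilon_k$ enter linearly and are summable by Assumption \ref{assum:epsilon}.
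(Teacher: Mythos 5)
Your proposal is correct and follows essentially the same route as the paper, which in fact omits this proof entirely on the grounds that it mirrors Lemma \ref{lem:z-y-dis}: two nested applications of \cite[Lemma 2]{xu2015augmented} to the perturbed recursions \eqref{eq:d-mean-diff-inex} and \eqref{eq:z-y-diff-inex}, with the summable $\epsilon_\ell$ forcing absorbed into $\mathcal{C}_4$. The only bookkeeping discrepancy is your use of $\epsilon_\ell^2 \leq \epsilon_0\epsilon_\ell$, which introduces an $\epsilon_0$ factor absent from the paper's $\mathcal{C}_4$; the coefficient $n\sum_\ell\epsilon_\ell$ there indicates the squared perturbation is meant to be of order $n\epsilon_\ell$ (i.e., $\|\mathbf{\mu}_\ell\|\leq\sqrt{n\epsilon_\ell}$ from $\|\mu_{i,\ell}\|^2\leq\epsilon_\ell$), which yields $\sum_\ell\epsilon_\ell$ directly, but this does not affect the validity of the conclusion.
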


\begin{proof}[Proof of Theorem \ref{theo:iter:inexact}]
It follows from \eqref{eq:descent-index} and \eqref{eq:z-y-dist-sum-index} that
 \be\label{eq:descent1-index}
    \begin{aligned}
        &\varphi_{\alpha}^{\rm DR}(\bs_0) -\varphi_{\alpha}^{\rm DR}(\bs_{k+1}) \\
        \geq & \frac{(1+\alpha L)(1-\alpha - 2\alpha L -2 \mathcal{C}_3\alpha L^2 )}{2 \alpha}\sum_{\ell=0}^k \|\bx_{\ell +1} - \bx_{\ell}\|^2 \\
         &-\frac{(1 + \alpha L)}{2\alpha^2}(\mathcal{C}_4 + 2\sqrt{n}\sum_{l=0}^k\epsilon_{l})  \\
         \geq & \frac{1}{4\alpha} \sum_{\ell=0}^k \|\bx_{\ell +1} - \bx_{\ell}\|^2-\frac{(1 + \alpha L)}{2\alpha^2}\mathcal{C}_5, 
    \end{aligned}
    \ee
  where  the first inequality comes from Lemma \ref{lem:z-y-dis-index}, and the second inequality is due to the assumption on $\alpha$.  By Assumption \ref{assum:f}, we have from \cite[Theorem 3.4]{themelis2020douglas} that $\varphi_{\alpha}^{\rm DR}(\bx_k, \bar{\by}_k) \geq \inf_{\bx} \{ f (\bx) + \delta_{\mathcal{C}}(\bx) \} = f^* > -\infty$. Then, it follows from \eqref{eq:descent1} that
    \be \label{eq:sum-x-bound-index} 
    \frac{1}{k+1} \sum_{\ell =0}^k \|\bx_{\ell + 1} - \bx_{\ell}\|^2 \leq \frac{4\alpha}{k+1} \left(\varphi_{\alpha}^{\rm DR}(\bx_0, \bar{\by}_0) - f^* + \frac{\mathcal{C}_5}{\alpha^2} \right). \ee
    Similar with \eqref{eq:rgrad}, we have that
    \be \label{eq:rgrad-index} \begin{aligned}
        \| \grad f(\bar{\bx}_k) \| \leq  \frac{3}{\alpha}\| \bar{\by}_k - \bx_k \|. 
    \end{aligned}
    \ee
    By the triangle inequality and Proposition \ref{prop}, we have
    \begin{equation}
    \begin{aligned}
        \|\bs_{k+1} - \bs_{k} \| & \leq \|\bs_{k+1} + \mathbf{\mu}_{k+1} - \bs_{k} - \mathbf{\mu}_{k}\| + \|\mathbf{\mu}_{k+1} - \mathbf{\mu}_{k}\|  \\
        & \leq 2\|\bx_{k+1} - \bx_k \| + 2\sqrt{n}\epsilon_k.
        \end{aligned}
    \end{equation}
    Then we have that
    \be \label{eq:diff-x-y-index} 
    \begin{aligned}
       \|\bar{\bx}_k - \bx_k \|  & \leq     \| \bar{\by}_k - \bx_k \| \\
       & \leq \| \bar{\by}_k - \bz_k\| + \|\bz_k - \bx_k \| \\
            & = \| \bar{\by}_k - \bz_k\| + \|\bs_{k+1} - \bs_{k} \| \\
            & \leq \| \bar{\by}_k - \bz_k\| + 2 \|\bx_{k+1} - \bx_k \|,\\
            & \leq \| \bar{\by}_k - \bz_k\| + 2 \|\bx_{k+1} - \bx_k \|+ 2\sqrt{n}\epsilon_k.
        \end{aligned}
    \ee
    Then, it holds that 
    \be \label{eq:diff-x-y-sum-index}
    \begin{aligned}
  &  \frac{1}{k+1}\sum_{\ell=0}^k \|\bar{\bx}_{\ell} - \bx_{\ell} \|^2 \\ \leq & \frac{1}{k+1}\sum_{\ell=0}^k (\|\bar{\by}_\ell - \bz_{\ell}\| + 2 \|\bx_{\ell +1} - \bx_{\ell}\| + 2\sqrt{n}\epsilon_l)^2 \\
     \leq & \frac{3}{k+1} \cdot \left[ (\mathcal{C}_3 \alpha^2L^2 + 4) \sum_{\ell =0}^k \|\bx_{\ell + 1} - \bx_\ell\|^2 + \mathcal{C}_5 \right].
    \end{aligned}
    \ee
    Combining \eqref{eq:rgrad-index} and \eqref{eq:diff-x-y-sum-index} gives
    \be \label{eq:rgrad-sum-index}  
    \begin{aligned}
    &    \frac{1}{k+1}\sum_{\ell =0}^k \|\grad f(\bar{\by}_\ell)\|^2  \leq \frac{9}{\alpha^2} \frac{1}{k+1}\sum_{\ell =0}^k \|\bar{\by}_{\ell} - \bx_{\ell}\|^2 \\
        & \leq \frac{27\mathcal{C}_1 \alpha^2 L^2 + 108}{(k+1) \alpha^2} \sum_{\ell =0}^k \|\bx_{\ell + 1} - \bx_\ell\|^2 + \frac{27 \mathcal{C}_5}{(k+1)\alpha^2}.\\
     %   & \leq \frac{4(8\mathcal{C}_1 \alpha^2 L^2 + 32)}{(k+1) \alpha} \left(\varphi_{\alpha}^{\rm DR}(\bx_0, \bar{\by}_0) - f^* + \frac{\mathcal{C}_2}{\alpha^2} \right)+ \frac{8 \mathcal{C}_2}{(k+1)\alpha^2}
    \end{aligned}
    \ee
    Putting \eqref{eq:sum-x-bound-index}, \eqref{eq:diff-x-y-sum-index}, and \eqref{eq:rgrad-sum-index} together gives \eqref{eq:station-consen-index} and \eqref{eq:station-grad-index}.
\end{proof}

\bibliographystyle{IEEEtran}
\bibliography{ref}

% Generated by IEEEtran.bst, version: 1.12 (2007/01/11)
\begin{thebibliography}{10}
\providecommand{\url}[1]{#1}
\csname url@samestyle\endcsname
\providecommand{\newblock}{\relax}
\providecommand{\bibinfo}[2]{#2}
\providecommand{\BIBentrySTDinterwordspacing}{\spaceskip=0pt\relax}
\providecommand{\BIBentryALTinterwordstretchfactor}{4}
\providecommand{\BIBentryALTinterwordspacing}{\spaceskip=\fontdimen2\font plus
\BIBentryALTinterwordstretchfactor\fontdimen3\font minus
  \fontdimen4\font\relax}
\providecommand{\BIBforeignlanguage}[2]{{%
\expandafter\ifx\csname l@#1\endcsname\relax
\typeout{** WARNING: IEEEtran.bst: No hyphenation pattern has been}%
\typeout{** loaded for the language `#1'. Using the pattern for}%
\typeout{** the default language instead.}%
\else
\language=\csname l@#1\endcsname
\fi
#2}}
\providecommand{\BIBdecl}{\relax}
\BIBdecl

\bibitem{scaglione2008decentralized}
A.~Scaglione, R.~Pagliari, and H.~Krim, ``The decentralized estimation of the
  sample covariance,'' in \emph{2008 42nd Asilomar Conference on Signals,
  Systems and Computers}.\hskip 1em plus 0.5em minus 0.4em\relax IEEE, 2008,
  pp. 1722--1726.

\bibitem{ye2021deepca}
H.~Ye and T.~Zhang, ``Deepca: Decentralized exact {PCA} with linear convergence
  rate,'' \emph{The Journal of Machine Learning Research}, vol.~22, no.~1, pp.
  10\,777--10\,803, 2021.

\bibitem{chen2021decentralized}
S.~Chen, A.~Garcia, M.~Hong, and S.~Shahrampour, ``Decentralized {R}iemannian
  gradient descent on the {S}tiefel manifold,'' in \emph{International
  Conference on Machine Learning}.\hskip 1em plus 0.5em minus 0.4em\relax PMLR,
  2021, pp. 1594--1605.

\bibitem{boumal2015low}
N.~Boumal and P.-A. Absil, ``Low-rank matrix completion via preconditioned
  optimization on the {G}rassmann manifold,'' \emph{Linear Algebra and its
  Applications}, vol. 475, pp. 200--239, 2015.

\bibitem{mishra2019riemannian}
B.~Mishra, H.~Kasai, P.~Jawanpuria, and A.~Saroop, ``A {R}iemannian gossip
  approach to subspace learning on {G}rassmann manifold,'' \emph{Machine
  Learning}, vol. 108, no.~10, pp. 1783--1803, 2019.

\bibitem{hu2023decentralized}
J.~Hu, K.~Deng, N.~Li, and Q.~Li, ``Decentralized {R}iemannian natural gradient
  methods with {K}ronecker-product approximations,'' \emph{arXiv preprint
  arXiv:2303.09611}, 2023.

\bibitem{absil2009optimization}
P.-A. Absil, R.~Mahony, and R.~Sepulchre, \emph{Optimization Algorithms on
  Matrix Manifolds}.\hskip 1em plus 0.5em minus 0.4em\relax Princeton
  University Press, 2009.

\bibitem{hu2020brief}
J.~Hu, X.~Liu, Z.~Wen, and Y.~Yuan, ``A brief introduction to manifold
  optimization,'' \emph{Journal of the Operations Research Society of China},
  vol.~8, pp. 199--248, 2020.

\bibitem{boumal2023introduction}
N.~Boumal, \emph{An introduction to optimization on smooth manifolds}.\hskip
  1em plus 0.5em minus 0.4em\relax Cambridge University Press, 2023.

\bibitem{bianchi2012convergence}
P.~Bianchi and J.~Jakubowicz, ``Convergence of a multi-agent projected
  stochastic gradient algorithm for non-convex optimization,'' \emph{IEEE
  Transactions on Automatic Control}, vol.~58, no.~2, pp. 391--405, 2012.

\bibitem{di2016next}
P.~Di~Lorenzo and G.~Scutari, ``{NEXT}: In-network nonconvex optimization,''
  \emph{IEEE Transactions on Signal and Information Processing over Networks},
  vol.~2, no.~2, pp. 120--136, 2016.

\bibitem{tatarenko2017non}
T.~Tatarenko and B.~Touri, ``Non-convex distributed optimization,'' \emph{IEEE
  Transactions on Automatic Control}, vol.~62, no.~8, pp. 3744--3757, 2017.

\bibitem{wai2017decentralized}
H.-T. Wai, J.~Lafond, A.~Scaglione, and E.~Moulines, ``Decentralized
  {F}rank--{W}olfe algorithm for convex and nonconvex problems,'' \emph{IEEE
  Transactions on Automatic Control}, vol.~62, no.~11, pp. 5522--5537, 2017.

\bibitem{hong2017prox}
M.~Hong, D.~Hajinezhad, and M.-M. Zhao, ``{Prox-PDA}: The proximal primal-dual
  algorithm for fast distributed nonconvex optimization and learning over
  networks,'' in \emph{International Conference on Machine Learning}.\hskip 1em
  plus 0.5em minus 0.4em\relax PMLR, 2017, pp. 1529--1538.

\bibitem{zeng2018nonconvex}
J.~Zeng and W.~Yin, ``On nonconvex decentralized gradient descent,'' \emph{IEEE
  Transactions on Signal Processing}, vol.~66, no.~11, pp. 2834--2848, 2018.

\bibitem{scutari2019distributed}
G.~Scutari and Y.~Sun, ``Distributed nonconvex constrained optimization over
  time-varying digraphs,'' \emph{Mathematical Programming}, vol. 176, no. 1-2,
  pp. 497--544, 2019.

\bibitem{sun2020improving}
H.~Sun, S.~Lu, and M.~Hong, ``Improving the sample and communication complexity
  for decentralized non-convex optimization: Joint gradient estimation and
  tracking,'' in \emph{International Conference on Machine Learning}.\hskip 1em
  plus 0.5em minus 0.4em\relax PMLR, 2020, pp. 9217--9228.

\bibitem{themelis2020douglas}
A.~Themelis and P.~Patrinos, ``{Douglas--Rachford} splitting and {ADMM} for
  nonconvex optimization: Tight convergence results,'' \emph{SIAM Journal on
  Optimization}, vol.~30, no.~1, pp. 149--181, 2020.

\bibitem{tsitsiklis1986distributed}
J.~Tsitsiklis, D.~Bertsekas, and M.~Athans, ``Distributed asynchronous
  deterministic and stochastic gradient optimization algorithms,'' \emph{IEEE
  transactions on automatic control}, vol.~31, no.~9, pp. 803--812, 1986.

\bibitem{nedic2009distributed}
A.~Nedic and A.~Ozdaglar, ``Distributed subgradient methods for multi-agent
  optimization,'' \emph{IEEE Transactions on Automatic Control}, vol.~54,
  no.~1, p.~48, 2009.

\bibitem{yuan2016convergence}
K.~Yuan, Q.~Ling, and W.~Yin, ``On the convergence of decentralized gradient
  descent,'' \emph{SIAM Journal on Optimization}, vol.~26, no.~3, pp.
  1835--1854, 2016.

\bibitem{shi2015extra}
W.~Shi, Q.~Ling, G.~Wu, and W.~Yin, ``{EXTRA}: An exact first-order algorithm
  for decentralized consensus optimization,'' \emph{SIAM Journal on
  Optimization}, vol.~25, no.~2, pp. 944--966, 2015.

\bibitem{shi2015proximal}
------, ``A proximal gradient algorithm for decentralized composite
  optimization,'' \emph{IEEE Transactions on Signal Processing}, vol.~63,
  no.~22, pp. 6013--6023, 2015.

\bibitem{xu2015augmented}
J.~Xu, S.~Zhu, Y.~C. Soh, and L.~Xie, ``Augmented distributed gradient methods
  for multi-agent optimization under uncoordinated constant stepsizes,'' in
  \emph{Proceedings of the 54th IEEE Conference on Decision and Control}, 2015,
  pp. 2055--2060.

\bibitem{qu2017harnessing}
G.~Qu and N.~Li, ``Harnessing smoothness to accelerate distributed
  optimization,'' \emph{IEEE Transactions on Control of Network Systems},
  vol.~5, no.~3, pp. 1245--1260, 2017.

\bibitem{nedic2017achieving}
A.~Nedic, A.~Olshevsky, and W.~Shi, ``Achieving geometric convergence for
  distributed optimization over time-varying graphs,'' \emph{SIAM Journal on
  Optimization}, vol.~27, no.~4, pp. 2597--2633, 2017.

\bibitem{chang2020distributed}
T.-H. Chang, M.~Hong, H.-T. Wai, X.~Zhang, and S.~Lu, ``Distributed learning in
  the nonconvex world: From batch data to streaming and beyond,'' \emph{IEEE
  Signal Processing Magazine}, vol.~37, no.~3, pp. 26--38, 2020.

\bibitem{mateos2010distributed}
G.~Mateos, J.~A. Bazerque, and G.~B. Giannakis, ``Distributed sparse linear
  regression,'' \emph{IEEE Transactions on Signal Processing}, vol.~58, no.~10,
  pp. 5262--5276, 2010.

\bibitem{erseghe2011fast}
T.~Erseghe, D.~Zennaro, E.~Dall'Anese, and L.~Vangelista, ``Fast consensus by
  the alternating direction multipliers method,'' \emph{IEEE Transactions on
  Signal Processing}, vol.~59, no.~11, pp. 5523--5537, 2011.

\bibitem{shi2014linear}
W.~Shi, Q.~Ling, K.~Yuan, G.~Wu, and W.~Yin, ``On the linear convergence of the
  {ADMM} in decentralized consensus optimization,'' \emph{IEEE Transactions on
  Signal Processing}, vol.~62, no.~7, pp. 1750--1761, 2014.

\bibitem{ling2015dlm}
Q.~Ling, W.~Shi, G.~Wu, and A.~Ribeiro, ``{DLM}: Decentralized linearized
  alternating direction method of multipliers,'' \emph{IEEE Transactions on
  Signal Processing}, vol.~63, no.~15, pp. 4051--4064, 2015.

\bibitem{Maros2018On}
M.~Maros and J.~Jaldén, ``On the {Q}-linear convergence of distributed
  generalized {ADMM} under non-strongly convex function components,''
  \emph{IEEE Transactions on Signal and Information Processing over Networks},
  vol.~5, no.~3, pp. 442 -- 453, 2018.

\bibitem{zhang2021penalty}
J.~Zhang, H.~Liu, A.~M.-C. So, and Q.~Ling, ``A penalty alternating direction
  method of multipliers for convex composite optimization over decentralized
  networks,'' \emph{IEEE Transactions on Signal Processing}, vol.~69, pp.
  4282--4295, 2021.

\bibitem{chen2021local}
S.~Chen, A.~Garcia, M.~Hong, and S.~Shahrampour, ``On the local linear rate of
  consensus on the {S}tiefel manifold,'' \emph{arXiv preprint
  arXiv:2101.09346}, 2021.

\bibitem{wang2022decentralized}
L.~Wang and X.~Liu, ``Decentralized optimization over the {S}tiefel manifold by
  an approximate augmented lagrangian function,'' \emph{IEEE Transactions on
  Signal Processing}, vol.~70, pp. 3029--3041, 2022.

\bibitem{wang2022variance}
------, ``A variance-reduced stochastic gradient tracking algorithm for
  decentralized optimization with orthogonality constraints,'' \emph{arXiv
  preprint arXiv:2208.13643}, 2022.

\bibitem{hu2023achieving}
J.~Hu, J.~Zhang, and K.~Deng, ``Achieving consensus over compact
  submanifolds,'' \emph{arXiv preprint arXiv:2306.04769}, 2023.

\bibitem{deng2023decentralized}
K.~Deng and J.~Hu, ``Decentralized projected {R}iemannian gradient method for
  smooth optimization on compact submanifolds,'' \emph{arXiv preprint
  arXiv:2304.08241}, 2023.

\bibitem{pillai2005perron}
S.~U. Pillai, T.~Suel, and S.~Cha, ``The {Perron-Frobenius} theorem: {Some} of
  its applications,'' \emph{IEEE Signal Processing Magazine}, vol.~22, no.~2,
  pp. 62--75, 2005.

\bibitem{clarke1995proximal}
F.~H. Clarke, R.~J. Stern, and P.~R. Wolenski, ``Proximal smoothness and the
  lower-{C2} property,'' \emph{Journal of Convex Analysis}, vol.~2, no. 1-2,
  pp. 117--144, 1995.

\bibitem{balashov2021gradient}
M.~Balashov and R.~Kamalov, ``The gradient projection method with {A}rmijo’s
  step size on manifolds,'' \emph{Computational Mathematics and Mathematical
  Physics}, vol.~61, pp. 1776--1786, 2021.

\bibitem{davis2020stochastic}
D.~Davis, D.~Drusvyatskiy, and Z.~Shi, ``Stochastic optimization over
  proximally smooth sets,'' \emph{arXiv preprint arXiv:2002.06309}, 2020.

\bibitem{li2016douglas}
G.~Li and T.~K. Pong, ``{Douglas--Rachford} splitting for nonconvex
  optimization with application to nonconvex feasibility problems,''
  \emph{Mathematical programming}, vol. 159, pp. 371--401, 2016.

\bibitem{patrinos2014douglas}
P.~Patrinos, L.~Stella, and A.~Bemporad, ``{Douglas-Rachford} splitting:
  Complexity estimates and accelerated variants,'' in \emph{53rd IEEE
  Conference on Decision and Control}.\hskip 1em plus 0.5em minus 0.4em\relax
  IEEE, 2014, pp. 4234--4239.

\bibitem{sarlette2009consensus}
A.~Sarlette and R.~Sepulchre, ``Consensus optimization on manifolds,''
  \emph{SIAM journal on Control and Optimization}, vol.~48, no.~1, pp. 56--76,
  2009.

\bibitem{lecun1998mnist}
Y.~LeCun, ``The mnist database of handwritten digits,'' \emph{http://yann.
  lecun. com/exdb/mnist/}, 1998.

\bibitem{hu2023projected}
J.~Hu, K.~Deng, J.~Wu, and Q.~Li, ``A projected semismooth newton method for a
  class of nonconvex composite programs with strong prox-regularity,''
  \emph{arXiv preprint arXiv:2303.05410}, 2023.

\bibitem{diaconis1991geometric}
P.~Diaconis and D.~Stroock, ``Geometric bounds for eigenvalues of {M}arkov
  chains,'' \emph{The annals of applied probability}, pp. 36--61, 1991.

\bibitem{boyd2004fastest}
S.~Boyd, P.~Diaconis, and L.~Xiao, ``Fastest mixing {Markov} chain on a
  graph,'' \emph{SIAM Review}, vol.~46, no.~4, pp. 667--689, 2004.

\end{thebibliography}
\end{document}